\documentclass[a4paper,10pt]{amsart}
\usepackage{amsthm,amsmath,amssymb,amsfonts}
\usepackage{thmtools}
\usepackage[T1]{fontenc}
\usepackage[utf8]{inputenc}
\usepackage{verbatim}
\usepackage{marvosym}
\usepackage{stackrel}
\usepackage{graphicx}
\usepackage{eucal}
\usepackage{bbold}
\usepackage[svgnames]{xcolor}
\definecolor{acid}{HTML}{438806}
\definecolor{crimson}{RGB}{177,12,12}
\definecolor{teal}{HTML}{007B7F}
\definecolor{orange}{HTML}{F28C28}
\definecolor{coral}{HTML}{FF6F61}
\definecolor{blue(munsell)}{rgb}{0.0, 0.5, 0.69}
\usepackage{lipsum}
\usepackage{svg}
\usepackage{bbm}
\usepackage{caption}
\usepackage{enumitem}
\usepackage{mathtools}
\usepackage{epigraph}
\usepackage{color}
\usepackage{microtype}
\usepackage{bussproofs}
\usepackage{relsize}
\usepackage{amsfonts}
\usepackage{adjustbox}
\usepackage{subfig}
\usepackage{framed}
\usepackage{hyperref}
\hypersetup{
    colorlinks = true,
    linkbordercolor = {teal},
   	linkcolor ={teal},
	anchorcolor = {pink},
	citecolor =  {coral},
	filecolor = {blue},
	menucolor = {blue},
	runcolor =  {blue},
	urlcolor = {orange},
}
\usepackage[capitalise]{cleveref}
\usepackage{trimclip}

\usepackage{epigraph}

\DeclareFontFamily{U}{min}{}
\DeclareFontShape{U}{min}{m}{n}{<-> udmj30}{}
\newcommand{\yo}{\!\text{\usefont{U}{min}{m}{n}\symbol{'210}}\!}

\usepackage{stmaryrd}
\usepackage{tikz}
\usepackage{tikz-cd}
\usepackage{quiver}
\tikzcdset{pullback/.style = {"\lrcorner"{anchor = center, pos = 0.125}, draw = none}}

\theoremstyle{definition}
\newtheorem{thm}{Theorem}[subsection]
\newtheorem*{thm*}{Theorem}
\newtheorem{prop}[thm]{Proposition}
\newtheorem*{prop*}{Proposition}
\newtheorem{lem}[thm]{Lemma}
\newtheorem{cor}[thm]{Corollary}
\newtheorem{defn}[thm]{Definition}
\newtheorem*{defn*}{Definition}
\newtheorem{war}[thm]{Achtung!}
\newtheorem*{war*}{Achtung!}
\newtheorem{rem}[thm]{Remark}
\newtheorem{constr}[thm]{Construction}
\newtheorem{exa}[thm]{Example}

\newtheorem*{notat*}{Notation}

\crefname{rem}{remark}{remarks}
\Crefname{rem}{Remark}{Remarks}

\newcommand{\textdel}[1]{}

\newcommand{\Ecal}{\mathcal{E}}
\newcommand{\Fcal}{\mathcal{F}}

\newcommand{\Hcal}{\mathcal{H}}

\newcommand{\Set}{\mathsf{Set}}
\newcommand{\Fin}{\mathsf{Fin}}
\newcommand{\op}{{}^{\mathrm{op}}}
\DeclareMathOperator{\id}{id}

\newcommand{\mc}[1]{\mathcal{#1}}

\newcommand{\mbb}[1]{\mathbb{#1}}
\newcommand{\mr}[1]{\mathrm{#1}}

\newcommand{\ms}[1]{\mathsf{#1}}
\newcommand{\lan}{\ms{lan}}
\newcommand{\ran}{\ms{ran}}

\newcommand{\LInj}{\ms{LInj}}

\newcommand{\WRInj}{\ms{WRInj}}
\newcommand{\Topoi}{\ms{Topoi}}

\newcommand{\LEX}{\ms{LEX}}

\newcommand{\CAT}{\ms{CAT}}

\newcommand{\psh}{\ms{Psh}}

\newcommand{\hook}{\hookrightarrow}

\newcommand{\Syn}{\ms{Syn}}

\newcommand{\Xcal}{\mathcal{X}}
\newcommand{\T}{\mathsf{T}}
\newcommand{\Sh}{\mathsf{Sh}}
\newcommand{\Obb}{\mathbb{O}}
\newcommand{\Psh}{\ms{Psh}}
\newcommand{\Alg}{\ms{Alg}}
\newcommand{\ALG}{\ms{ALG}}
\newcommand{\lex}{\ms{Lex}}
\newcommand{\Sat}{\ms{Sat}}

\DeclareMathOperator*{\colim}{colim}
\newcommand{\Hflat}{\Hcal_{\text{flat}}}
\newcommand{\Hmatte}{\Hcal_{\text{matte}}}
\newcommand{\Hdom}{\Hcal_{\text{dom}}}
\newcommand{\Hpure}{\Hcal_{\text{pure}}}
\renewcommand{\tilde}{\widetilde}
\newcommand{\Cl}{\ms{Cl}}

\makeatletter
\newcommand{\lt@}[2]{%
  \vtop{\m@th\ialign{##\cr
    \hfil$#1\operator@font lim$\hfil\cr
    \noalign{\nointerlineskip\kern1.5\ex@}#2\cr
    \noalign{\nointerlineskip\kern-\ex@}\cr}}%
}
\newcommand{\lt}{%
  \mathop{\mathpalette\lt@{\leftarrowfill@\textstyle}}\nmlimits@
}
\makeatother

\title{Logics vs Doctrines, a mirror symmetry}

\author{Ivan Di Liberti}
\author{Umberto Tarantino}
\author{Lingyuan Ye}

\address{
Ivan \textsc{Di Liberti} \newline
Department of Philosophy, Linguistics and Theory of Science\newline
University of Gothenburg\newline
Gothenburg, Sweden\newline
\href{mailto:diliberti.math@gmail.com}{\sf diliberti.math@gmail.com}
}

\address{
Umberto \textsc{Tarantino} \newline
Université Paris Cité, CNRS, IRIF, F-75013\newline
Paris, France\newline
\href{mailto:tarantino@irif.fr}{\sf tarantino@irif.fr}
}

\address{
Lingyuan \textsc{Ye} \newline
Department of Computer Science and Technology\newline
University of Cambridge\newline
Cambridge, UK\newline
\href{mailto:ye.lingyuan.ac@gmail.com}{\sf ye.lingyuan.ac@gmail.com}
}

\thanks{The first-named author was supported by the Swedish Research Council (SRC, Vetenskapsrådet) under Grant No.~2019-04545. The research has received funding from Knut and Alice Wallenbergs Foundation through the Foundation's program for mathematics. The second-named author acknowledges financial support from the Agence Nationale de la Recherche
(ANR), project ANR-23-CE48-0012-01, and from the Laboratoire Méthodes Formelles (LMF), Université Paris-Saclay.}

\begin{document}

\begin{abstract}
We study the relationship between doctrines, encoding 2-categories of theories in predicate logic, and fragments of geometric logic, defined through Kan injectivity in the 2-category of topoi. After a brief historical account of the notion of doctrine, we propose our working definition. Our main result then establishes a Galois connection between doctrines and subgeometric logics. This connection suggests a \emph{mirror symmetry} between the world of doctrines and the world of logics, in which constructions and results on either side admit counterparts on the other. We develop this symmetry through constructions of \emph{syntactic categories} and \emph{classifying topoi} in both settings, recovering the familiar examples from categorical logic. We prove paired Diaconescu-type theorems and give symmetric universal characterisations of the two closure operators induced by the Galois connection. Finally, we show that passing from a doctrine to its closure induces a reflection between the corresponding 2-categories of algebras, and we relate this reflection to conceptual completeness.

  \smallskip \noindent \textbf{Keywords.} doctrine, lax-idempotent pseudomonad, fragment of geometric logic, conceptual completeness, categorical logic, topos, coherent topos, coherent logic, regular logic
   \noindent  \textbf{MSC2020.} 03G30, 18D65, 18B25, 18F10, 18N15, 03B10, 03C75, 18C10, 18A15, 18E08.
\end{abstract}

\maketitle

{
   \hypersetup{linkcolor=black}
   \tableofcontents
}

\section*{Introduction}

\subsection*{There is doctrine and doctrine}

The aim of this paper is to contribute to the study of doctrines in categorical logic and their relation with fragments of geometric logic. As we feel that the term ``doctrine'' is quite opaque and at times overloaded in this field, we shall start by discussing what we choose \emph{doctrine} to mean exactly, and why we make such a choice.

As many influential categorical notions from the late '60s and the early '70s, doctrines were introduced by Lawvere \cite{Lawvere1969Doctrines} within a much broader exploratory programme in categorical logic. In his original conception, and with a little discretionality due to the vagueness of the language in his paper, the word ``doctrine'' describes what nowadays would be called a \emph{pseudomonad} acting on categories, intuitively endowing them with \emph{algebraic structure}. Lawvere clarifies the intended meaning of his notion and the motivation behind it in a later message to the category theory mailing list, reproduced below.

\epigraph{%
The word ``doctrine'' itself is entirely due to Jon Beck and
signifies something which is like a theory, except appropriate to be interpreted in the category of categories, rather than, for example, in the category of sets; of course, an important example of a doctrine is a $2$-monad, and among $2$-monads there are key examples whose category of `algebras' is actually a category of theories in the set-interpretable sense. Among such `theories of theories', there is a special kind whose study I proposed in that paper. This kind has come to be known as ``Kock--Z\"o{}berlein'' doctrine in honor of those who first worked out some of the basic properties and ramifications, but the recognition of its probable
importance had emerged from those discussions with Jon.%
}{%
F.~W.~Lawvere~\cite{Lawvere2006JonBeck}%
}

Over time, Lawvere's programme, and the number of mathematical entities falling under the name of \emph{doctrines} with it, branched out in several research directions, which we will discuss further in \Cref{subsec:doc}. Here, we recall the two main incarnations of the concept of doctrine which are relevant for our investigations in this paper. 
\begin{itemize}
    \item[i.] Following the contributions of Reyes, Joyal, and the Montréal school \cite{MarquisReyes2012HistoryCategoricalLogic,Reyes2019ToposMontreal}, on one hand, \emph{doctrines} are used to collectively refer to the established \emph{fragments of predicate logic}, such as the regular, coherent, and disjunctive fragments. A consolidated treatment of this tradition can be found in \cite[D2.1]{johnstoneSketchesElephantTopos2002}; see also the introduction of \cite{dilibertiBiaccessibleBipresentable2Categories2024}.\footnote{Here we are referring to a \emph{fragment of predicate logic} as the collection of formulas of (possibly infinitary) first-order logic obtained by imposing syntactic restrictions on the formation of formulas. While this notion is intuitive and still unformalised, it is standard enough that the examples are immediately recognisable as such.}
 
    \item[ii.] Following the seminal work of Kock and Z\"oberlein \cite{kockMonadsStructuresAdjoint1995,zoberlein}, on the other hand, \emph{doctrines} are formalised as \emph{lax-idempotent pseudomonads} on a 2-category, indeed also called \emph{KZ-doctrines}. Relevant references in this context are, e.g., \cite{marmolejo-doctrines, kellyPROPERTYLIKESTRUCTURES, marmolejo-wood-kan-extensions,walkerYonedaStructuresKZ2018}.
\end{itemize}

\looseness=-1
These two interpretations of the word share Lawvere's paper as an intellectual ancestor, and are broadly motivated by the study of \emph{$2$-categories of theories} in a predicate language. Indeed, every fragment of predicate logic appearing in mathematical practice identifies a 2-category whose objects are categories endowed with the appropriate logical connectives and type formers: well-known examples, corresponding to the aforementioned fragments, are the 2-categories $\mathsf{Reg}$, $\mathsf{Coh}$, and $\mathsf{Disj}$, of regular, coherent, and disjunctive categories, respectively. The connection with the second interpretation lies then on the serendipity that each of these 2-categories can be proved to be the 2-category of algebras for a lax-idempotent pseudomonad --- crucially, not acting on mere categories, but on the \emph{finitely-complete} ones \cite{garnerLexColimits2012,dilibertiBiaccessibleBipresentable2Categories2024}. 

Drawing from both points of view, and in light of this discussion, in this paper doctrines will be taken to be lax-idempotent pseudomonads on the 2-category $\lex$ of categories with finite limits, satisfying some additional properties (\cref{def:doc}).\footnote{Matters of size will be discussed and handled later in the document.} In this sense, our treatment aligns with Garner and Lack's framework of lex colimits \cite{garnerLexColimits2012}, albeit with a distinctively logical point of view. Indeed, in our choice of terminology, we retain the logical flavour of the first interpretation above, related to Lawvere's `theory of theories'; from the second interpretation, instead, we retain the technical apparatus of lax-idempotent pseudomonads. With this definition, the algebras for a doctrine $\T$ act as the theories in the fragment of logic specified by $\T$, thus recovering the aforementioned examples of $\mathsf{Reg}$, $\mathsf{Coh}$, $\mathsf{Disj}$ and also the 2-category $\mathsf{Logoi}$ of \emph{logoi} in the sense of \cite{joyal-anel}.

\subsection*{Fragments of geometric logic}In \cite{dilibertiLogicConcepts2category2025}, the first and last author initiated a programme aiming for a modular study of \emph{fragments of geometric logic}, henceforth also simply called \emph{logics}. Indeed, bounding the ambient logic to geometric logic allows for a simple formal definition of a fragment thereof which exploits the powerful techniques of (Grothendieck) topos theory; simultaneously, it maintains enough flexibility to encompass many relevant examples. The key idea lies in identifying a logic with its \emph{semantic prescriptions}, understood as the expected behaviour of its semantic trace. Concretely, this identification is achieved by defining a fragment as a collection of \emph{geometric morphisms} in the 2-category of topoi. The first fruits of this programme include Craig interpolation \cite{dilibertiCraigInterpolationSubgeometric2026} and conceptual completeness \cite{dilibertiConceptualCompletenessSubgeometric2026} results for various fragments of geometric logic.

Albeit currently the only candidate for a formalised notion of a fragment of geometric logic, the definition of \cite{dilibertiLogicConcepts2category2025} does not come without shortcomings. The main one, lamented by the authors themselves in \cite[\S 7.3]{dilibertiLogicConcepts2category2025}, and quite transparent in the intuition of semantic prescriptions, is the fact that such a notion of logic does not have a naturally corresponding \textit{syntax}, which contrasts with what everyone would expect from a reasonable notion of logic. Yet, a defining feature, and a crucial gadget of the framework developed in \cite{dilibertiLogicConcepts2category2025,dilibertiCraigInterpolationSubgeometric2026,dilibertiConceptualCompletenessSubgeometric2026}, is the fact that every fragment $\mathcal{H}$ induces a lax-idempotent pseudomonad $\T^{\Hcal}$ on $\lex$ --- indeed a \emph{doctrine}, as per the previous discussion.  The doctrine $\T^{\Hcal}$ can be understood as a \emph{syntactic monad}, in that its 2-category of algebras is precisely the 2-category of \textit{theories} in the fragment $\mathcal{H}$, and that it encodes the logical operations that \emph{syntactic categories} will come equipped with. 

The fact that every fragment of geometric logic in the sense of \cite{dilibertiLogicConcepts2category2025} gives rise to a doctrine, in the light of the above discussion, is almost a `sanity check' for the very notion. The converse is not true: for instance, the doctrine of coherent categories does not arise in this way from a fragment of geometric logic. As shown in \cite{dilibertiConceptualCompletenessSubgeometric2026}, the doctrine of \emph{pretopoi} is instead induced by a fragment of geometric logic, the \emph{coherent} fragment. These consideration naturally lead to the following question, which constitutes the main motivation for this paper:
\[  \mbox{\textit{What is the relation between doctrines and fragments of geometric logic?}} \] 

\subsection*{Contribution and structure of the paper}

In~\cref{sec:logicsvsdoctrines}, we begin by recalling the notion of fragments of geometric logic introduced in~\cite{dilibertiLogicConcepts2category2025} and, after some further historical remarks, by introducing the notion of doctrine which will be used throughout this paper (\cref{def:doc}). We then show that our notion is essentially equivalent to the framework of lex colimits introduced in~\cite{garnerLexColimits2012} (\cref{cor:doctrines-are-lex-colimits}), so that it accommodates all the familiar examples in categorical logic. In~\cref{subsec:pseudofalsum} we discuss an example of something which is \emph{not} a doctrine in our sense, and we justify our choice with some logical considerations. Finally, in~\cref{subsec:base-change} we prove some technical results about doctrines and we use them to show how inclusions of doctrines induce forgetful functors between their associated 2-categories of algebras.

The main result of this paper, and the foundation to later sections, is a Galois correspondence between logics and doctrines, described in~\cref{sec:galois}. The connection is realised (up to appropriate size restrictions) by mapping a logic $\Hcal$ to the doctrine $\T^{\Hcal}$ already introduced in~\cite{dilibertiLogicConcepts2category2025}, and a doctrine $\T$ to a logic $\Hcal_{\T}$ introduced in~\cref{const:logicofdoctrine}.  
\begin{thm*}[\ref{thm:galois-connection}]
    There is an adjunction between posets:
    \[\begin{tikzcd}
	{\mathsf{Log_{bnd}^{\text{op}}}} && {\mathsf{Doc_{bnd}}}
	\arrow[""{name=0, anchor=center, inner sep=0}, "{\T^{-}}"', curve={height=12pt}, from=1-1, to=1-3]
	\arrow[""{name=1, anchor=center, inner sep=0}, "{\Hcal_{-}}"', curve={height=12pt}, from=1-3, to=1-1]
	\arrow["\dashv"{anchor=center, rotate=-90}, draw=none, from=1, to=0]
    \end{tikzcd}\]
\end{thm*}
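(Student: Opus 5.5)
Since both sides are posets, the adjunction is equivalent to a single Galois-connection bi-implication. For a bounded logic $\Hcal$ (a class of geometric morphisms, ordered by inclusion) and a bounded doctrine $\T$ (ordered by inclusion of doctrines, i.e.\ $\T \le \T'$ when there is a morphism of lax-idempotent pseudomonads $\T\to\T'$, equivalently $\T'$-algebras are $\T$-algebras), we must show
\[ \T \le \T^{\Hcal} \quad\Longleftrightarrow\quad \Hcal \subseteq \Hcal_{\T}. \]
The variance convention (op on $\mathsf{Log_{bnd}}$) may require flipping one order, and we first check it against the definitions. Before this, we verify that both assignments are monotone and land in the bounded parts. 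For $\Hcal\subseteq\Hcal'$ we expect $\T^{\Hcal'}\le \T^{\Hcal}$, since more Kan-injectivity conditions give fewer algebras and a smaller free-completion. For $\T\le\T'$ we expect $\Hcal_{\T'}\subseteq \Hcal_{\T}$, using the base-change results of \cref{subsec:base-change}, which state that an inclusion of doctrines yields a forgetful functor between algebras compatible with the units.

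To prove the bi-implication, we unfold \cref{const:logicofdoctrine}. We expect $\Hcal_{\T}$ to consist of the geometric morphisms $f$ such that every topos whose underlying lex category carries the relevant $\T$-structure is Kan-injective against $f$. Concretely, $f$ belongs to $\Hcal_\T$ when the classifying topoi of $\T$-theories $\Sh(\T C)$ (or the topoi $\Psh$ of $\T$-algebras) are injective with respect to $f$. Dually, $\T^{\Hcal}C$ is built as the closure of $C$ inside the $\Hcal$-injective hull of $\Psh(C\op)$ (or its classifying topos), namely the smallest lex subcategory closed under the operations inherited from $\Hcal$-injectivity.

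For the direction $\Hcal\subseteq\Hcal_\T \Rightarrow \T\le\T^{\Hcal}$, we note that every $\T^{\Hcal}$-algebra, that is, every category of $\Hcal$-injective type, must carry the $\T$-structure. This holds because $\T$-structure on a lex category is detected by Kan injectivity of associated topoi against morphisms in $\Hcal_\T$, and these include $\Hcal$. We then use the universal property of the lax-idempotent $\T$ (algebras form a full reflective sub-2-category with property-like structure) to obtain the comparison $\T\to\T^{\Hcal}$. For the converse, given $\T\le\T^{\Hcal}$, every $f\in\Hcal$ is a morphism against which all $\T^{\Hcal}$-injective topoi lift. These topoi include the ones generating $\Hcal_\T$, because $\T^{\Hcal}$-algebras are $\T$-algebras, so $f\in\Hcal_\T$.

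The main obstacle is the direction showing that the $\T$-structure is recognised by topos-theoretic injectivity. Concretely, one must show that a lex category $C$ is a $\T$-algebra if and only if some associated topos (for instance the sheaf topos for the canonical $\T$-topology, via the lex-colimit description of \cref{cor:doctrines-are-lex-colimits}) is Kan-injective against all of $\Hcal_\T$. We would attack this with the lex-colimit presentation: the $\T$-structure amounts to certain lex colimits existing and being preserved. Each such colimit class determines a geometric morphism between presheaf/sheaf topoi, and injectivity against that morphism encodes exactly the existence of the colimit. Size bounds ("bnd") are needed to ensure that these generating classes are small, so that $\T^{\Hcal}$ exists as a doctrine.
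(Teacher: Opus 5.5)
Your reduction to the bi-implication $\T \subseteq \T^{\Hcal} \iff \Hcal \subseteq \Hcal_{\T}$ is fine, and so is the antitonicity of $\Hcal_{-}$ via base change. The step ``every $\T^{\Hcal}$-algebra is a $\T$-algebra $\Rightarrow$ $\T\subseteq\T^{\Hcal}$'' is also fine: apply \Cref{lem:inclusions-are-lans} to the $\T^{\Hcal}$-algebra $\T^{\Hcal}(\mc C)$. But neither direction is actually proved, because both constructions are mis-unpacked.

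By definition, $f\in\Hcal_{\T}$ means that $f_*$ is a $\T$-algebra morphism. It is not defined by injectivity of some family of topoi against $f$. Object-level injectivity is the wrong kind of condition anyway, since $\Psh(\mc C)$ and $\Set[\Obb]$ are injective against \emph{every} geometric morphism. Likewise, $\T^{\Hcal}(\mc C)$ is not a closure of $\mc C$. It is the full subcategory $\WRInj(\Hcal)(\Psh(\mc C),\Set[\Obb])$ of those presheaves which, as geometric morphisms into the object classifier, preserve right Kan extensions along $\Hcal$. (Also, a smaller doctrine has \emph{more} algebras, not fewer.)

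Consequently, the step you flag as the main obstacle does not go through. Recognising $\T$-algebras through Kan injectivity of an ``associated topos'' either fails outright (take $\Psh(C)$) or presupposes the $\T$-structure, since classifying topoi $\Cl_{\T}$ are only defined for $\T$-algebras. Any meaningful version would be a conceptual-completeness-type recognition principle, which is not available in general. It is also unnecessary.

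The missing idea is the characterisation in \Cref{prop:HofT-and-direct-images-being-Talgebra-morphisms}. For smallness-preserving $\T$, $f\in\Hcal_{\T}$ iff every embedding $\Sigma\eta_{\mc C}\colon\Psh(\mc C)\hookrightarrow\Psh(\T\mc C)$ preserves right Kan extensions along $f$; here the inverse image of $\Sigma\eta_{\mc C}$ is restriction along $\eta_{\mc C}$. Combine this with the factorisation $X\cong\yo_X\circ\Sigma\eta_{\mc C}$ for $X\in\T(\mc C)$, and both inclusions become pointwise.

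\emph{Unit.} Each $X\in\T(\mc C)$ is a map preserving $\ran_f$ for all $f\in\Hcal_{\T}$ followed by a representable. Hence it is a morphism of $\Hcal_{\T}$-injectives, so $\T\subseteq\T^{\Hcal_{\T}}$. If $\Hcal\subseteq\Hcal_{\T}$, antitonicity of $\T^{-}$ then gives $\T\subseteq\T^{\Hcal_{\T}}\subseteq\T^{\Hcal}$.

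\emph{Counit.} Let $f\in\Hcal$; you need $\Sigma\eta_{\mc C}\colon\Psh(\mc C)\hookrightarrow\Psh(\T^{\Hcal}\mc C)$ to preserve $\ran_f$. This can be tested after composing with the representables $\yo_X$, $X\in\T^{\Hcal}(\mc C)$. Each composite $\yo_X\circ\Sigma\eta_{\mc C}\cong X$ preserves $\ran_f$ by the very definition of $\T^{\Hcal}$. Thus $\Hcal\subseteq\Hcal_{\T^{\Hcal}}$, so $f_*$ is a $\T^{\Hcal}$-algebra morphism. If $\T\subseteq\T^{\Hcal}$, \Cref{prop:forgetful-between-algebras} makes it a $\T$-algebra morphism, i.e.\ $f\in\Hcal_{\T}$.

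This is where boundedness genuinely enters. It makes $\T^{\Hcal}$ smallness-preserving, so that $\Psh(\T^{\Hcal}\mc C)$ is a topos and the $\Sigma\eta$-characterisation applies. Via $\Hcal\subseteq\Hcal_{\T^{\Hcal}}$, it also ensures that $\T^{\Hcal}$ is again bounded. It is not there to make any ``generating classes'' small.
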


\looseness=-1
To \emph{materialise} the above Galois connection, in \Cref{sec:mirror-symmetry} we discuss a \emph{mirror symmetry} between logics and doctrines. Roughly speaking, this is the observation that many constructions and results on logics, such as the ones developed in~\cite{dilibertiLogicConcepts2category2025}, admit a counterpart in the realm of doctrines, and vice-versa. In particular, we introduce notions of \emph{syntactic categories} and \emph{classifying topoi} in the setting of doctrines, and we show how they are related to the corresponding notions for logics. In \Cref{subsec:diaconescus}, we prove a collection of Diaconescu-like results on both sides of the mirror.

In~\cref{sec:mise-en-abyme}, we then proceed to place logics and doctrines \emph{into abyss}: that is, we relate a logic $\Hcal$ and a doctrine $\T$ to their mirror images under the Galois connection. Concretely, we show universal characterisations of $\Hcal_{\T^{\Hcal}}$ as the \emph{logic of $\mc H$-classifying topoi} (\cref{thm:hth-as-the-largest-containing-h-classifying-topoi}) and, symmetrically, of $\T^{\Hcal_{\T}}$ as the \emph{doctrine of $\T$-syntactic categories} (\cref{thm:tht-as-the-largest-including-t-algebras}).

We conclude this paper by studying the relationship between a doctrine $\ms T$ and its reflection $\ms T^{\mc H_{\T}}$, in~\cref{sec:reflection}, through the lens of \emph{conceptual completeness}, taking the setting of coherent logic as a case study. First, we prove that the 2-category of $\ms T^{\mc H_{\ms T}}$-algebras embeds reflectively into the 2-category of $\ms T$-algebras (\cref{thm:reflection-tht-algebras}). Then, under the assumption of conceptual completeness for the logic $\Hcal_{\T}$, we characterise the image of this embedding as the \emph{conceptually complete} $\ms T$-algebras, in a sense reminiscent of Makkai and Reyes's characterisation of pretopoi among coherent categories (\cref{def:conceptually-complete-algebra,prop:tht-as-conceptually-complete-algebras}). 

\begin{notat*}
Throughout the paper, we follow the same notational conventions of \cite{dilibertiConceptualCompletenessSubgeometric2026}. 
When speaking of 2-dimensional category theory, we will loosely speak of 2-category, 2-functor, 2-adjunction even for bicategories, pseudofunctors, pseudoadjunctions. When dealing with 2-categories whose objects are themselves categories, we will typically use lowercase letters for the 2-category defined by small objects, and capital letters for the 2-category defined by locally-small objects: for instance, $\lex$ (resp.\ $\LEX$) will denote the 2-category of small (resp.\ locally-small) finitely-complete categories, finite-limit-preserving functors, and natural transformations. This does not apply to $\Topoi$, which will denote the 2-category of (Grothendieck) topoi, geometric morphisms, and geometric transformations, nor to sub-2-categories thereof.
\end{notat*}

\subsection*{AI Declaration}

AI was only used as an enhanced Google Search to retrieve existing papers, to correct typos and grammatical mistakes. None of the ideas, statements or results are obtained with AI assistance.

\section{Logics and doctrines} \label{sec:logicsvsdoctrines}
In this first section, we shall set the ground for our investigations by formally introducing the main characters of the paper: logics and doctrines. For the former, as explained in the introduction, we put ourselves in the framework of \emph{fragment of geometric logic} of \cite{dilibertiLogicConcepts2category2025}. For the latter, instead, building on the above discussion we give our notion of \emph{doctrine}, which improves on the hasty treatment of \cite{dilibertiLogicConcepts2category2025}; we then explore its appropriateness by relating it with the theory of lex colimits developed by Garner and Lack \cite{garnerLexColimits2012}, and by discussing a non-example of logical nature. We conclude with a \emph{base change} result, showing that inclusions of doctrines naturally induce forgetful 2-functors between the 2-categories of algebras.

\subsection{Logics}\label{sec:logics}\looseness=-1
First, we briefly recall the notion of \emph{logic} from \cite{dilibertiLogicConcepts2category2025}, to which we defer the reader for a discussion of the necessary background and intuition. The key idea lies in identifying the informal notion of a fragment of geometric logic with the \emph{semantic prescriptions} imposed on the categories of models of theories in the fragment (see \cite[Rem.\ 5.0.3]{dilibertiLogicConcepts2category2025}). Formally, this identification is achieved by working inside the 2-category $\Topoi$ of topoi, and by looking at (weak) right Kan injectivity classes as the topoi whose (generalised) points admit the structure prescribed by a given logic.

\begin{defn}[Logic]\label{def:logic}
    A logic $\mc H$ is a class of geometric morphisms in $\Topoi$.
\end{defn}

\begin{defn}[Topoi formally in the logic]
A topos $\Xcal$ \emph{formally belongs} to a logic $\Hcal$ if it is (weakly) right Kan injective with respect to $\Hcal$-morphisms, that is, if the right Kan extension below exists in the 2-category $\Topoi$, for all $f \colon \mc E \to \mc F$ in $\Hcal$ and all geometric morphisms $x \colon \mc E \to \mc X$:
\[\begin{tikzcd}
	{\mc E} & {\mc X} \\
	{\mc F}
	\arrow["x", from=1-1, to=1-2]
	\arrow["f"', from=1-1, to=2-1]
	\arrow["{\ran_fx}"', dashed, from=2-1, to=1-2]
\end{tikzcd}\]
A \emph{morphism of $\Hcal$-injectives} is then a geometric morphism $g \colon \mc X \to \mc Y$ between topoi formally in $\Hcal$ which preserves these Kan extensions. We denote by $\WRInj(\Hcal)$ the locally full sub-2-category of $\Topoi$ spanned by topoi formally in $\Hcal$ and morphisms of $\Hcal$-injectives.
\end{defn}

\begin{exa}\label{exa:examples-of-logics}
We recall here four examples of logics introduced in \cite{dilibertiLogicConcepts2category2025} and studied also in \cite[\S 3]{dilibertiConceptualCompletenessSubgeometric2026}.
\begin{itemize}
    \item[i.]\label{def:coherent-logic}\emph{Coherent logic} is the class $\Hflat$ of \emph{flat} geometric morphisms, i.e.\ those whose direct image preserves finite coproducts and epimorphisms.
    \item[ii.]\label{def:regular-logic}\emph{Regular logic} is the class $\Hmatte$ of \emph{matte} geometric morphisms, i.e.\ those whose direct image preserves epimorphisms. 
    \item[iii.]\label{def:ess-alg-logic-with-falsum} \emph{Essentially algebraic logic with falsum} is the class $\Hdom$ of \emph{dominant} geometric morphisms, i.e.\ those whose direct image preserves the initial object.
    \item[iv.]\label{def:finit-disj-logic}
\emph{Finitary disjunctive logic} is the class $\Hpure$ of \emph{pure} geometric morphisms, i.e.\ those whose direct image preserves finite coproducts.
\end{itemize}
\end{exa}

This simple definition allows us for an immediate comparison between logics, simply by inclusion. 

\begin{defn}[The poset of logics]
    We denote by $\mathsf{Log}$ the poset of logics, ordered by setting $\Hcal \subseteq \Hcal'$ if $\Hcal$ is a subclass of $\Hcal'$. 
\end{defn}

\begin{exa}
    Coherent logic $\Hflat$ is contained in regular logic $\Hmatte$. Moreover, any logic $\Hcal$ clearly contains the empty logic $\Hcal_\emptyset$ and is contained in the logic $\Hcal_{\mr{all}}$ of all geometric morphisms.  
\end{exa}

\subsection{Doctrines} \label{subsec:doc}Before presenting our notion of doctrine (\Cref{def:doc}), let us make some further historical comments on the development and the usage of the word ``doctrine'' across categorical logic, as a complement to the brief discussion in the introduction. 

As we discussed already, the word doctrine appeared originally in \cite{Lawvere1969Doctrines} simply to denote, in modern terminology, pseudomonads on $\CAT$. Much of Lawvere's concrete motivation at the time came from the then-recent work of Kock~\cite{Kock1967LimitMonads}, who had studied the main examples of what we would now call lax-idempotent pseudomonads. In this original incarnation, as aforementioned, doctrines were meant to encode categories equipped with algebraic structure; following this research line, the word ``doctrine'' was thus also used to refer to the theory of \textit{clubs} \cite{kelly1974clubs}, and other $2$-dimensional tools used to establish coherence results, often with logical methods \cite{kelly1972manyvariable,kelly1974clubs,kelly1972cutelimination}.

However, these `algebraic' doctrines are not exactly all that Lawvere had in mind when he gave his first definition, and that intellectual milieu shaped many other notions that later competed for the same name. Indeed, Lawvere had in mind a much more \textit{grandiose} and \textit{logically oriented} programme for the notion of doctrine, which can now be informally understood as the aggregate wisdom of \textit{functorial semantics} when the intended semantics are \textit{categories} instead of bare sets. In a more modern terminology, one can think of the recent works \cite{dilibertiBiaccessibleBipresentable2Categories2024,ArkorBourkeKo2024,LambertPatterson2024,Osser2025,GambinoTendas2026} --- but see also the older \cite{Street1980,Power1995} --- and more broadly of the theory of $2$-dimensional \emph{sketches} as a natural incarnation of this programme, which is considerably less developed than its $\Set$-based counterpart.

To finish this discussion, we should also acknowledge that the word ``doctrine'' is also used to refer to the galaxy of indexed posets generalizing Lawvere's \emph{hyperdoctrines} \cite{Lawvere1969Adjointness}. Discussing the precise relations between \textit{doctrines} and \textit{doctrines} goes largely beyond the scope of this paper; here, we limit ourselves to the following remark. There is an ontological \textit{clash} between the two terminologies: a doctrine in the sense of indexed posets embodies a single \textit{theory} (!), as opposed to a whole logical fragment. Thus, for instance, a \textit{coherent (hyper-)doctrine} represents a theory in the doctrine of coherent logic. In this sense, Lawvere's work contains a linguistic hiccup, for which there is no entirely satisfactory solution. In this paper, we resolve it by reserving ``doctrine'' for the logic itself rather than its theories.

\subsubsection{Our notion of doctrine}\label{sssec:doctrines}
The discussions above can be summarised by saying that, in the literature, there is no precise and unanimous definition of what a doctrine should be. As a consequence, we have yet to obtain a complete classification of doctrines: one of the aims of this paper is precisely to frame this problem, and contribute to such classification. In this subsection, we fix our definition of doctrine and we make some remarks on it.

\begin{defn}[Doctrines]\label{def:doc}
    A \emph{doctrine} is a lax-idempotent relative pseudomonad $\T$ over the inclusion $\lex \hookrightarrow \LEX$, with unit $\eta \colon 1 \Rightarrow \T$, such that for each category $\mc C$ in $\lex$:
    \begin{enumerate}
    \item $\T(\mc C)$ is a full\footnote{We always assume full subcategories to be \emph{replete}, i.e.\ closed under isomorphism in the larger category.} subcategory of $\Psh(\mc C)$, and the embedding $\T(\mc C) \hookrightarrow \Psh(\mc C)$ is the component of a pseudonatural transformation;
    \item $\eta_{\mc C} \colon \mc C \hookrightarrow \T(\mc C)$ is the corestriction of the Yoneda embedding $\yo\colon \mc C \hook \Psh(\mc C)$.
    \end{enumerate} 
\end{defn}

\begin{rem}
	The above definition simplifies and improves on the treatment of doctrines given in \cite[\S 5.2]{dilibertiLogicConcepts2category2025}, motivated by the investigations of \cite{dilibertiLogicConcepts2category2025, dilibertiCraigInterpolationSubgeometric2026, dilibertiConceptualCompletenessSubgeometric2026} and at the same time contributing to them. Of course, our choice for what a doctrine should be is susceptible to criticism. For instance, as made evident from the work of Di Liberti and Osmond \cite{dilibertiBiaccessibleBipresentable2Categories2024} and later exemplified by Osser \cite{Osser2025}, there are several `$2$-categories of theories' that fall outside of our umbrella, including \textit{categories with attributes} and other examples coming from the semantics of type theory. While we appreciate that these examples perfectly fit with Lawvere's original programme, we are comforted in our restriction by the fact that the word ``doctrine'' has never been associated to this tradition.
\end{rem}

In the spirit of \cite{ACU-conference, ACU-relative}, such a lax-idempotent relative pseudomonad can be extended to the whole $2$-category $\LEX$ using the theory of \emph{small presheaves}. While we shall describe this construction below, we emphasise here that, in this paper, we prefer the relative point of view since the lax-idempotent pseudomonads we will work with, induced by logics in the sense of \Cref{def:logic}, can be described in much simpler and more natural terms as \emph{relative} pseudomonads.

First, recall that the presheaf construction $\Psh \colon \lex \to \LEX$ extends to the \emph{small presheaf} pseudomonad on $\LEX$, also denoted by $\Psh$ (see, e.g., \cite{day-lack, adamek-rosicky-free-completions}). Concretely, the extension maps a category $\mc C$ in $\LEX$ to the full subcategory of $[\mc C\op, \Set]$ spanned by those presheaves that can be expressed as small colimits of representables --- which indeed coincides with $[\mc C\op, \Set]$ if $\mc C$ is small. More generally, we can \emph{globalise} doctrines as follows.

\begin{constr}[Globalisation of a lax-idempotent relative pseudomonad]
	Let $\T$ be a lax-idempotent relative pseudomonad over the inclusion $\lex \hookrightarrow \LEX$ and fix a category $\mc C$ in $\LEX$. Denote by $\mc S(\mc C)$ the (large) poset of small full subcategories of $\mc C$ in $\LEX$, i.e.\ fully-faithful functors $\mc C' \hookrightarrow \mc C$ in $\LEX$ with small domain. Note that every such subcategory $\mc C'$ of $\mc C$ induces a canonical embedding 
	\[ \T(\mc C') \hookrightarrow \Psh(\mc C') \hookrightarrow \Psh (\mc C),\]
	where $\Psh(\mc C') \hookrightarrow \Psh(\mc C)$ acts by left Kan extension along $\mc C' \hookrightarrow \mc C$. Taking the directed union of full subcategories of $\Psh(\mc C)$, we can then extend $\T$ to $\mc C$ by setting
    \begin{equation} \label{eq:extendingsmall}\tag{$\star$}
	\T (\mc C) \coloneqq \bigcup_{\mc C' \in \mc S (\mc C)} \T(\mc C'),
    \end{equation}
	which is thus clearly locally small. The fact that $\T(\mc C)$ lies in $\LEX$ follows with a small-object argument: indeed, every small diagram in $\T(\mc C)$ must factor through $\T(\mc C')$ for some $\mc C' \in \mc S(\mc C)$. In the same way we can extend the unit and the multiplication of $\T$, thus obtaining a lax-idempotent pseudomonad on $\LEX$ which we refer to as the \emph{globalisation} of $\T$ and also denote by $\T$. 
\end{constr}

The globalisation of a doctrine, as constructed above, is a \emph{full submonad} of the small presheaf pseudomonad $\Psh$ on $\LEX$ in the sense of \cite[\S 3]{garnerLexColimits2012}, meaning that its unit and multiplication are given by restricting and corestricting those of $\Psh$. 

Conversely, such a submonad arises from a doctrine as above if its behavior on large categories is completely determined by its restriction to small categories, in the sense of \eqref{eq:extendingsmall}. Note that every \emph{completion under lex colimits}, i.e.\ every pseudomonad of the form $\Phi_l$ where $\Phi$ is a class of \emph{lex-weights} in the sense of \cite[\S 3]{garnerLexColimits2012}, verifies \eqref{eq:extendingsmall}. More generally, we can extract the following result from \cite{garnerLexColimits2012}.

\begin{lem}
    Every submonad $\T$ of $\Psh \colon \LEX \to \LEX$ satisfying \eqref{eq:extendingsmall} is of the form $\Phi_l$ for a class $\Phi$ of lex-weights.
\end{lem}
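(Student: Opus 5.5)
We would take $\Phi$ to be the class of all lex-weights that are realised by $\T$ itself. Concretely, following Garner and Lack, a lex-weight is a presheaf $W$ on a small finitely-complete category $\mc D$, i.e.\ $W \in \Psh(\mc D)$ with $\mc D \in \lex$. We set
\[ \Phi \coloneqq \{\, W \in \Psh(\mc D) \mid \mc D \in \lex,\ W \in \T(\mc D) \,\}. \]
The goal is then to show $\Phi_l(\mc C) = \T(\mc C)$ as full subcategories of $\Psh(\mc C)$ for every $\mc C$ in $\LEX$. Since both pseudomonads are full submonads of $\Psh$, their units and multiplications are restrictions of those of $\Psh$. Agreement on objects therefore already gives agreement as pseudomonads.

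The first step is to recall the explicit description of $\Phi_l(\mc C)$ from \cite[\S 3]{garnerLexColimits2012}. It is the smallest full subcategory of $\Psh(\mc C)$ that contains the representables and is closed under $\Phi$-weighted lex colimits. By definition, a presheaf $X$ lies in $\Phi_l(\mc C)$ if it can be written as $\lan_{F} W$ for some lex functor $F \colon \mc D \to \mc C$ with $W \in \Phi$ on $\mc D$, iterated as needed. The central input here is Garner and Lack's result that, for a submonad of $\Psh$, closure under $\Phi$-lex colimits coincides with closure under the saturation of $\Phi$. This saturation is computed using left Kan extension along lex functors.

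For the inclusion $\Phi_l(\mc C) \subseteq \T(\mc C)$, we argue as follows.
\begin{itemize}
\item Representables lie in $\T(\mc C)$ by condition (2) of \cref{def:doc}.
\item Let $W \in \T(\mc D)$ and let $F \colon \mc D \to \mc C$ be lex. Pseudonaturality of $\T \hookrightarrow \Psh$, together with the fact that $\Psh(F)$ is left Kan extension, shows that $\lan_F W$ lies in $\T(\mc C)$.
\item Closure under iterated such colimits follows from the multiplication $\T\T \to \T$ being a restriction of that of $\Psh$, so $\T(\mc C)$ is closed under the colimits it interprets.
\end{itemize}

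For the reverse inclusion $\T(\mc C) \subseteq \Phi_l(\mc C)$, we use \eqref{eq:extendingsmall}. Any $X \in \T(\mc C)$ lies in $\T(\mc C')$ for some small lex full subcategory $i \colon \mc C' \hookrightarrow \mc C$, embedded into $\Psh(\mc C)$ via $\lan_i$. Then $X \in \Phi$ as a weight on $\mc C'$. Hence $X = \lan_i X$ is the $X$-weighted colimit of the lex diagram $i$, computed in $\Psh(\mc C)$, and so it lies in $\Phi_l(\mc C)$. This is exactly the step where the hypothesis \eqref{eq:extendingsmall} is used.

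The main obstacle will be the first inclusion. One must ensure that the colimits in $\Phi_l(\mc C)$, which are computed by Garner and Lack as colimits inside $\Psh(\mc C)$ of lex diagrams weighted by elements of $\Phi$, remain inside $\T(\mc C)$. This needs more than closure under representables and lex left Kan extensions, because the diagram now lands in $\T(\mc C)$ rather than in $\mc C$. We would handle it as follows. First, use a small-object argument to reduce to small $\mc C$, since by \eqref{eq:extendingsmall} any small diagram factors through some $\T(\mc C')$. For small $\mc C$, observe that $\T(\mc C)$ is finitely complete and that the diagram $D$ is lex. The weighted colimit then equals $\mu_{\mc C}(\lan_{D} W)$, with $\lan_D W \in \T\T(\mc C)$ by pseudonaturality. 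The multiplication $\mu_{\mc C}$ of $\T$ is the restricted colimit map of $\Psh$, and it lands in $\T(\mc C)$.
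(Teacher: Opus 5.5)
Your proposal is correct and follows the paper's proof. It uses the same class $\Phi$ of lex-weights realised by $\T$, and the same use of \eqref{eq:extendingsmall} to pass from the tautological inclusion $\T(\mc C')\subseteq\Phi_l(\mc C')$ on small categories to $\T(\mc C)\subseteq\Phi_l(\mc C)$. The only difference is in the inclusion $\Phi_l(\mc C)\subseteq\T(\mc C)$: the paper just cites Garner--Lack's characterisation of $\Phi_l$ as the smallest submonad of $\Psh$ containing the weights in $\Phi$, whereas you re-derive that minimality from the closure description via $\mu_{\mc C}(\lan_D W)\cong W * D$. That computation works verbatim for every $\mc C$ in $\LEX$, so your reduction to small $\mc C$ and your aside about the saturation of $\Phi$ are not needed.
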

\begin{proof}
Consider the class of lex-weights
\[ \Phi^{\T} \coloneqq \left\{\  \varphi \colon {\mc K} \op \to \Set \ \mid \ \mc K \in \lex , \ \varphi \in \T(\mc K) \subseteq \Psh(\mc K)\ \right\}, \]
 and consider the pseudomonad $\Phi_l^{\T}$ constructed as in \cite[\S 3]{garnerLexColimits2012}, i.e.\ the smallest submonad $\ms Q$ of $\Psh$ with the property that each lex-weight $\varphi \colon {\mc K} \op \to \Set$ in $\Phi^{\T}$ lies in $\ms Q(\mc K)$. Then, clearly $\Phi_l^{\T}(\mc C) \subseteq \T(\mc C)$ for each category $\mc C$ in $\LEX$. The converse inclusion holds tautologically on small categories; thus, via \eqref{eq:extendingsmall}, it holds on all categories in $\LEX$.
\end{proof}

\looseness=-1
The above discussion can thus be summarised in the following corollary: our notion of doctrine is effectively equivalent to the completions under lex colimits of \cite{garnerLexColimits2012}.

\begin{cor}[Doctrines are lex-colimit completions]\label{cor:doctrines-are-lex-colimits}
Every doctrine can be globalised to a completion under lex colimits, and conversely every completion under lex colimits is the globalisation of a unique doctrine.
\end{cor}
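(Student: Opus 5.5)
The corollary has two directions; I plan to derive each from the globalisation construction and the preceding lemma, with some bookkeeping about the conditions in \cref{def:doc}.

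\emph{From doctrines to lex-colimit completions.} Let $\T$ be a doctrine and take its globalisation, also written $\T$. The construction gives a full submonad of $\Psh$ on $\LEX$. Its unit is the corestricted Yoneda embedding, by condition (2) of \cref{def:doc}. Its multiplication is the restriction of that of $\Psh$, because each $\T(\mc C)\hookrightarrow\Psh(\mc C)$ is pseudonatural by condition (1). Lax-idempotency then forces the multiplication to be the one inherited from $\Psh$, namely left Kan extension of the Yoneda embedding. By construction the globalisation satisfies \eqref{eq:extendingsmall}. The preceding lemma therefore exhibits it as $\Phi^{\T}_l$, a completion under lex colimits.

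\emph{From lex-colimit completions to doctrines.} Let $\Phi$ be a class of lex-weights and consider $\Phi_l$. As remarked before the lemma, $\Phi_l$ satisfies \eqref{eq:extendingsmall}. Its restriction along $\lex\hookrightarrow\LEX$ is a lax-idempotent relative pseudomonad. Since $\Phi_l$ is a full submonad of $\Psh$, this restriction satisfies condition (1) (pseudonaturality of the inclusion into $\Psh$) and condition (2) of \cref{def:doc}. Because $\Phi_l$ satisfies \eqref{eq:extendingsmall}, globalising this restriction returns $\Phi_l$: both sides are the same directed union of full replete subcategories of $\Psh(\mc C)$. For uniqueness, suppose $\Phi_l$ is the globalisation of two doctrines $\T$ and $\T'$. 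On a small $\mc C$ the poset $\mc S(\mc C)$ has top element $\mc C$, so the globalisation restricts to $\T(\mc C)$ itself. Hence $\T(\mc C)=\Phi_l(\mc C)=\T'(\mc C)$ as full replete subcategories of $\Psh(\mc C)$. The units coincide (both are Yoneda), and the extension operations are determined by those of $\Psh$. So $\T=\T'$.

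\emph{Main obstacle.} The delicate step is checking that the globalisation really is a full submonad of $\Psh$ on $\LEX$. In particular, its multiplication and the extension of a lex functor $F\colon\mc C\to\T(\mc D)$ with large $\mc C$ and $\mc D$ must be computed by restricting $\Psh$'s structure. I would handle this with the small-object argument already sketched in the construction. Each object of $\T(\mc C)$ lies in some $\T(\mc C')$ with $\mc C'$ small. One then enlarges $\mc C'$ to a small lex full subcategory through which the relevant data factors. Compatibility of the left Kan extensions $\Psh(\mc C')\hookrightarrow\Psh(\mc C)$ with the pseudonaturality in condition (1) then reduces everything to the small case.
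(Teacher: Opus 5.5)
Your proposal is correct and follows the paper's route. The paper obtains the corollary by combining three things: the globalisation of a doctrine is a full submonad of $\Psh$ on $\LEX$ satisfying \eqref{eq:extendingsmall}; the preceding lemma identifies such submonads with the completions $\Phi_l$; and every $\Phi_l$ satisfies \eqref{eq:extendingsmall}, so it is recovered by globalising its restriction to $\lex$. The step you flag as the main obstacle, that the globalisation really is a full submonad of $\Psh$, is only asserted in the paper via the same small-object argument, and your explicit uniqueness argument (using that $\mc C$ is the top element of $\mc S(\mc C)$ when $\mc C$ is small) spells out what the paper leaves implicit.
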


\begin{exa}\label{exa:doc-from-cat-logic}
    All the examples in \cite{garnerLexColimits2012} yield examples of doctrines in our sense, thus recovering many familiar completions of categories with finite limits; in particular:
    \begin{itemize}
        \item[i.] the doctrine $\T^{\text{reg}}$ of regular categories, such that $\T^{\text{reg}}(\mc C)$ is the \emph{free regular category} on $\mc C$;
        \item[ii.] the doctrine $\T^{\text{ex}}$ of exact categories, such that $\T^{\text{reg}}(\mc C)$ is the \emph{free exact category} on $\mc C$;
        \item[iii.] the doctrine $\T^{\text{coh}}$ of coherent categories, such that $\T^{\text{coh}}(\mc C)$ is the \emph{free coherent category} on $\mc C$;
        \item[iv.] the doctrine $\T^{\text{pretop}}$ of pretoposes, such that $\T^{\text{pretop}}(\mc C)$ is the \emph{free pretopos} on $\mc C$;
        \item[v.] the doctrine $\T^{\text{geom}}$ of geometric categories, such that $\T^{\text{geom}}(\mc C)$ is the \emph{free geometric category} on $\mc C$;
        \item[vi.] the doctrine $\T^{\text{adh}}$ of adhesive categories, such that $\T^{\text{adh}}(\mc C)$ is the \emph{free adhesive category} on $\mc C$.
    \end{itemize}
\end{exa}

We now observe that \Cref{def:doc} can also be rephrased in arguably more natural terms, in the spirit of \cite{PowerCattaniWinskel2000}. Indeed, the representation theorem proved therein shows that a lax-idempotent pseudomonad on $\CAT$ whose unit $\eta$ is both fully faithful and \emph{dense} --- meaning that $\lan_{\eta_{\mc C}}\eta_{\mc C}$ is the identity for each category $\mc C$ --- is always the completion under a class of colimits. As we have seen, our doctrines are also completions under (lex-)colimits, so that one could be tempted to define a doctrine simply as a lax-idempotent relative pseudomonad over $\lex \hookrightarrow \LEX$ whose unit is fully faithful and dense.  This is possible, and indeed equivalent to \Cref{def:doc}, provided that left Kan extensions along unit functors are computed pointwise in $\CAT$.

\begin{prop}\label{prop:doctrines-via-lans-and-density}
	Let $\T$ be a lax-idempotent relative pseudomonad over the inclusion $\lex \hookrightarrow \LEX$. Then, $\T$ is a doctrine if and only if, for each $\mc C$ in $\lex$:
	\begin{enumerate}
		\item the unit $\eta_{\mc C} \colon \mc C \hookrightarrow \T (\mc C)$ is fully faithful;
		\item left Kan extensions along $\eta_{\mc C}$ are computed pointwise in $\CAT$;
		\item the left Kan extension $\lan_{\eta_{\mc C}}\eta_{\mc C}$ is the identity.
	\end{enumerate}
\begin{proof}
\looseness=-1
	If $\T$ is a doctrine, then $(1)$ and $(3)$ are clear, while $(2)$ follows by \cite[Prop.\ 3.4]{garnerLexColimits2012} and \cite[Thm.\ 5.10]{dilibertiKZPseudomonadsKanInjectivity2024} through \Cref{cor:doctrines-are-lex-colimits}.
	
	Conversely, suppose $\T$ satisfies $(1)$, $(2)$ and $(3)$. Then, for each category $\mc C$ in $\lex$, by density of $\eta_{\mc C}$ it follows that its nerve $\lan_{\eta_{\mc C}} \yo^{\mc C} \colon \T(\mc C) \to \Psh(\mc C)$ is fully faithful, and it clearly determines a pseudonatural transformation $\T \Rightarrow \Psh$. Finally, since $\eta_{\mc C}$ is fully faithful and the left Kan extension $\lan_{\eta_{\mc C}} \yo^{\mc C}$ is computed pointwise in $\CAT$, the diagram below commutes,
    \[\begin{tikzcd}[sep = small]
    	& {\mc C} & \\
    	{\T(\mc C)} && {\Psh(\mc C)}
    	\arrow["{\eta_{\mc C}}"', hook, from=1-2, to=2-1]
    	\arrow["{\yo^{\mc C}}", hook, from=1-2, to=2-3]
    	\arrow["{\lan_{\eta_{\mc C}}\yo^{\mc C}}"', hook, from=2-1, to=2-3]
    \end{tikzcd}\]
     which makes $\T$ a doctrine.
\end{proof}
\end{prop}

\looseness=-1
We conclude this subsection by giving our definition of an \emph{algebra} for a doctrine. Concretely, we will work with \emph{lax-idempotent pseudoalgebras} for a doctrine seen as a lax-idempotent relative pseudomonad, in the sense of \cite[Def.\ 5.1 and Prop.\ 5.14]{ArkorNathanael2025Boaf}: that is, categories admitting left Kan extensions along the unit functors $\eta_{\mc C} \colon \mc C \hookrightarrow \T (\mc C)$. 	We note here that our stricter definition of a doctrine, compared to \cite[Def.\ 5.2.1]{dilibertiLogicConcepts2category2025}, allows us to simplify the definition of an algebra: indeed, once the necessary Kan extensions exist, they automatically satisfy the two requests spelled out in \cite[\S 5.3]{dilibertiLogicConcepts2category2025} by means of \Cref{prop:doctrines-via-lans-and-density}.

\begin{defn}[Algebras for a doctrine]
    Let $\T$ be a doctrine. An \emph{algebra} for $\T$ is a category $\mc A$ in $\LEX$ such that, for any functor $G \colon \mc C \to \mc A$ in $\LEX$ with $\mc C$ small, the left Kan extension $\lan_{\eta_{\mc C}} G \colon \T(\mc C) \to \mc A$ exists in $\LEX$,
\[\begin{tikzcd}
	{\mc C} & {\mc A} \\
	{\T(\mc C)}
	\arrow["G", from=1-1, to=1-2]
	\arrow["{\eta_{\mc C}}"', hook, from=1-1, to=2-1]
	\arrow["{\lan_{\eta_{\mc C}}G}"', dashed, from=2-1, to=1-2]
\end{tikzcd}\] 

	An \emph{algebra morphism} $f \colon \mc A \to \mc B$ is a functor in $\LEX$ which preserves these left Kan extensions. We denote by $\ALG(\T)$ the 2-category of algebras, algebra morphisms, and natural transformations. 

	Note that a \emph{small} algebra $\mc A$ can equivalently be described as a category in $\lex$ such that the unit functor $\eta_{\mc A} \colon \mc A \hookrightarrow \T(\mc A)$ admits a left adjoint $a \colon \T(\mc A) \to \mc A$ in $\LEX$. We denote by $\Alg(\T)$ the full sub-2-category of $\ALG(\T)$ on the small algebras.
\end{defn}

\begin{rem}\label{rem:largealgebraglobal}
    In practice, the above notion of algebra for a doctrine $\ms T$ also coincides with a pseudoalgebra for the globalisation of $\ms T$. In this sense, we can also recover the description of a large $\T$-algebra as a category $\mc A$ in $\LEX$ equipped with a left adjoint $a \colon \ms T\mc A \to \mc A$ for the unit $\eta_{\mc A} \colon \mc A \hookrightarrow \T(\mc A)$. 
\end{rem}

\subsubsection{Essentially algebraic theories with pseudofalsum}\label{subsec:pseudofalsum}\looseness=-1
Before moving to some general results regarding the theory of doctrines, we present a relevant (counter)example. Indeed, we have just seen that our notion of doctrine can be phrased in terms of density of the unit `in $\CAT$'; we now show that density `in $\LEX$' is not enough to characterise doctrines. More precisely, we construct a lax-idempotent relative pseudomonad over $\lex \hookrightarrow \LEX$ (in fact a lax-idempotent monad on $\lex$) whose unit $\eta$ is fully faithful and dense, in the sense that the left Kan extension $\lan_{\eta_{\mc C}} \eta_{\mc C}$ computed in $\LEX$ is the identity for each category $\mc C$ in $\lex$, while the same Kan extension computed in $\CAT$ is \emph{not} always the identity (cf.\ also \cite[Rem.\ 5.11]{dilibertiKZPseudomonadsKanInjectivity2024}).

\begin{exa}[Lex categories with a least subterminal object]%
	\looseness=-1
Let $t \colon 1 \to 2$ be the map choosing the top element of the poset $2 = \{0\leq 1\}$, seen as a functor in $\lex$. Its \emph{left injectives} in $\lex$ in the sense of \cite[Def.\ 1.2]{dilibertiKZPseudomonadsKanInjectivity2024} define a 2-category $\LInj(t)$ such that the forgetful 2-functor $\LInj(t) \to \lex$ has a left adjoint inducing a lax-idempotent pseudomonad on $\lex$ with dense and fully faithful unit (see \cite[Thm.\ 4.3]{dilibertiKZPseudomonadsKanInjectivity2024}). Denote by $\T^t$ this pseudomonad.\footnote{\looseness=-1 Formally, to place $\T^{t}$ in the context of \Cref{prop:doctrines-via-lans-and-density}, we should see it as a relative pseudomonad over the inclusion $\lex \hookrightarrow \LEX$, which clearly makes no practical difference for what follows.}

Now, one can see that $\LInj(t)$ is the 2-category of small lex categories with a \emph{least subterminal object} and lex functors preserving it, which we denote by $\lex^{\bot^{\mathrm{p}}}$. In particular, the poset $2$ lies in $\LInj(t)$; therefore, $\LInj(t)(2, \mc A) \simeq 1\simeq \lex(1, \mc A)$ for each category $\mc A$ in $\lex$ having a least subterminal object, since $1$ is also initial in $\lex$. This implies that $\mathsf{T}^t(1) \simeq 2$ and, through the equivalence, the unit $\eta_1 \colon 1 \to \T^t(1)$ corresponds to $t \colon 1 \to 2$. The left Kan extension $\lan_{\eta_1}\eta_1$ computed in $\CAT$ is thus not the identity, as one can check that $\lan_t t(0) =1$. 
\end{exa}

In light of the previous example, the reader might be inclined to think that our notion of doctrine is not general enough, and that perhaps we should relax \Cref{def:doc} to include lax-idempotent relative pseudomonads over $\lex \hookrightarrow \LEX$ with dense and fully faithful unit. In order to argue that this is not the case, we need some logical considerations.

\begin{rem}[Essentially algebraic theories with pseudofalsum]\label{rem:ess-alg-with-pseudofalsum}
By \cite[Thm.\ 4.3]{dilibertiKZPseudomonadsKanInjectivity2024}, the $2$-category $\mathsf{Alg}(\mathsf{T}^t)$ of small $\T^t$-algebras is equivalent to $\LInj(t)$, and hence to the 2-category $\lex^{\bot^{\mathrm{p}}}$ of small lex categories with a least subterminal object and lex functors preserving it. Thus, we can think of $\T^t$-algebras as essentially algebraic theories extended with a \textit{pseudofalsum}, a nullary predicate $\bot^{\mathrm{p}}$ subject to axioms prescribing its entailing every other nullary predicate. More precisely, we can think of the logic described by $\T^t$ as characterised by a type $\bot^{\mathrm{p}}$ obeying a weakened elimination principle, which only applies to \emph{closed} formulas:
\begin{prooftree}
  \AxiomC{$\varphi$ is a closed formula}
  \UnaryInfC{$\bot^{\mr p} \vdash \varphi$}
\end{prooftree}
Note that $\Alg(\T^t)$ fully-faithfully contains the $2$-category $\mathsf{siLex}$ of small lex categories with a \emph{strict initial object} and lex functors preserving it, characterised in \cite[\S 3.4]{dilibertiConceptualCompletenessSubgeometric2026} as the $2$-category of theories in the fragment of \emph{essentially algebraic logic with falsum}: indeed, an actual falsum is in particular a pseudofalsum.
\end{rem}

\begin{rem}[Should we change our notion of doctrine?]\label{rem:no-change-of-doc}
Currently, the general theory of doctrines and fragments of geometric logic does not support a satisfactory theory of syntax --- at least, not in a way that a type-theorist or a model-theorist would appreciate as such. For this reson, it is hard to answer this question in a formal way.  In principle, a more flexible notion of doctrine could certainly encompass essentially algebraic logic with pseudofalsum, for which one could likely provide a convincing deductive system. However, we choose a more restrictive notion of doctrine so as to `match' the idea of a fragment of geometric logic. Intuitively, the point is that we want our doctrines to be \emph{doctrines of full symbols}: that is, if a doctrine allows for some type constructor among those of geometric logic, it should inherit all its geometric rules, rather than a subset thereof as in the case of essentially algebraic logic with pseudofalsum.
\end{rem}

\subsubsection{Base change and morphisms of doctrines}\label{subsec:base-change}
As we discussed, the motivating examples of doctrines are those identifying relevant $2$-categories of theories, such as the 2-categories $\mathsf{Reg}$ of (small) \emph{regular categories} and $\mathsf{Coh}$ of (small) \emph{coherent categories}, as their algebras. Clearly, there are natural forgetful 2-functors between these categories, such as the one $\mathsf{Coh} \to \mathsf{Reg}$, witnessing the fact that every coherent category is also regular. In this section, we shall see that this functor arises naturally from the fact that we have a \textit{morphism of doctrines} between the doctrine of regular theories and that of coherent ones. 

Since, at its core, a doctrine is a lax-idempotent relative pseudomonad, in principle one would need a whole theory of morphisms of these entities. As discussed already in \cite{dilibertiCraigInterpolationSubgeometric2026}, however, such a theory does not exist yet, and it would be out of the scope of this paper to develop it. Instead, we will establish \textit{just enough} of what we need so as to prove the results we are interested in: concretely, this means that we will now introduce a natural \emph{order} between doctrines. Of course, we could also globalise our doctrines and go through the theory of lex colimits, working then with the existing morphisms of pseudomonads, but we feel that such a choice is less native to our context. We will discuss how our approach relates to the theory of \textit{relative completions} of \cite{garnerLexColimits2012} later, in \Cref{rem:relative-completions}.

\begin{defn}[The poset of doctrines]
    We denote by $\mathsf{Doc}$ the poset of doctrines, ordered by setting $\T \subseteq \T'$ if $\T(\mc C) \subseteq \T'(\mc C)$ as full subcategories of $\Psh(\mc C)$ for each category $\mc C$ in $\lex$. 
\end{defn}

\begin{exa}
    The doctrine $\T^{\text{reg}}$ of regular categories is contained in the doctrine $\T^{\text{coh}}$ of coherent categories. Moreover, any doctrine $\T$ clearly contains the identity doctrine $\lex \hookrightarrow \LEX$ and is contained in the presheaf doctrine $\Psh$.
\end{exa}

As we now show, given two doctrines $\T$ and $\T'$, we can characterise the fact that $\T \subseteq \T'$ in terms of left Kan extensions of the form $\lan_{\eta_{\mc C}}\eta'_{\mc C}$ which are moreover stable under extending further along $\eta'$.

\begin{lem}\label{lem:inclusions-are-lans}
    Given two doctrines $\T$ and $\T'$, the following are equivalent:
    \begin{enumerate}
        \item$\T \subseteq \T'$;
        \item for each category $\mc C$ in $\lex$, the left Kan extension $\lan_{\eta_{\mc C}}\eta_{\mc C}'$ exists in $\LEX$.
\[\begin{tikzcd}
	{\mc C} & {\T'(\mc C)} \\
	{\T(\mc C)}
	\arrow["{\eta'_{\mc C}}", hook, from=1-1, to=1-2]
	\arrow["{\eta_{\mc C}}"', hook, from=1-1, to=2-1]
	\arrow["{\lan_{\eta_{\mc C}}\eta_{\mc C}'}"', dashed, from=2-1, to=1-2]
\end{tikzcd}\]
    \end{enumerate}

In particular, for each category $\mc C$ in $\lex$, the inclusion $\T(\mc C) \hookrightarrow \Psh(\mc C)$ is computed as $\lan_{\eta_{\mc C}}\yo^{\mc C}$.
\begin{proof}
    If $\T \subseteq \T'$, then it follows exactly as in \cite[Prop.\ 5.2.8]{dilibertiLogicConcepts2category2025} that the embedding $\T(\mc C) \hookrightarrow \T'(\mc C)$ is given by the left Kan extension $\lan_{\eta_{\mc C}}\eta_{\mc C}'$ (cf.\ also \cite[Lem.\ 59]{walkerDistributiveLawsAdmissibility2019}). Conversely, if the left Kan extension $\lan_{\eta_{\mc C}}\eta_{\mc C}'$ exists in $\LEX$, then it follows by \cite[Thm.\ 18]{walkerYonedaStructuresKZ2018} that it has to be fully faithful, and hence realise the inclusion $\T(\mc C) \subseteq \T'(\mc C)$.
\end{proof}
\end{lem}

\begin{rem}\label{rem:lans-give-morphisms}
   Note that, for each functor $G \colon \mc C \to \mc A$ in $\LEX$ where $\mc C$ is small and $\mc A$ is a $\T$-algebra, the left Kan extension $\lan_{\eta_{\mc C}}g \colon \T (\mc C) \to \mc A$ is a $\T$-algebra morphism. Moreover, a functor $G \colon \T (\mc C) \to \mc A$ in $\LEX$ is a $\T$-algebra morphism if and only if $G \cong \lan_{\eta_{\mc C}} (G\circ \eta_{\mc C})$. In particular, the inclusion $\T (\mc C) \hookrightarrow \T' (\mc C)$ is thus a $\T'$-algebra morphism.
\end{rem}

\begin{lem}\label{lem:kancellation-for-doctrines}
 Let $\T$ and $\T'$ be doctrines such that $\T \subseteq \T'$. For each $\T'$-algebra $\mc A$ and each functor $g \colon \mc C \to \mc A$ in $\LEX$ with $\mc C$ small, the left Kan extension $\lan_{\eta_{\mc C}} g \colon \T (\mc C) \to \mc A$ exists in $\LEX$ and is given by
\[ \lan_{\eta_{\mc C}} g \cong \lan_{\eta_{\mc C}'}g \circ \lan_{\eta_{\mc C}}\eta_{\mc C}' .\]
\end{lem}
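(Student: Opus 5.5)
The natural candidate for $\lan_{\eta_{\mc C}} g$ is the composite $L \coloneqq \lan_{\eta'_{\mc C}}g \circ \iota$, where $\iota \coloneqq \lan_{\eta_{\mc C}}\eta'_{\mc C} \colon \T(\mc C) \hookrightarrow \T'(\mc C)$ is the fully faithful inclusion of \Cref{lem:inclusions-are-lans}. The plan is to verify that $L$ has the universal property of the left Kan extension in $\LEX$. Both factors lie in $\LEX$: $\lan_{\eta'_{\mc C}} g$ exists in $\LEX$ because $\mc A$ is a $\T'$-algebra, and $\iota$ lies in $\LEX$ by \Cref{lem:inclusions-are-lans}. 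Moreover, $\iota\circ\eta_{\mc C}\cong\eta'_{\mc C}$, since $\eta'_{\mc C}$ is fully faithful and $\iota$ is the corestricted identity of $\Psh(\mc C)$ on representables. Hence
\[ L\circ \eta_{\mc C} \cong \lan_{\eta'_{\mc C}}g\circ\eta'_{\mc C}\cong g, \]
using that $\eta'_{\mc C}$ is fully faithful and that $\lan_{\eta'_{\mc C}} g$ is pointwise by \Cref{prop:doctrines-via-lans-and-density}(2). This provides the candidate unit.

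The core step is the pointwise formula. By \Cref{prop:doctrines-via-lans-and-density}(2), $\lan_{\eta'_{\mc C}}g$ is computed pointwise in $\CAT$. For $X\in\T(\mc C)$ we therefore have
\[ L(X) = \colim\bigl(\eta'_{\mc C}\downarrow \iota X \to \mc C \xrightarrow{g} \mc A\bigr). \]
Since $\iota$ is fully faithful and commutes with the units, the comma categories $\eta'_{\mc C}\downarrow\iota X$ and $\eta_{\mc C}\downarrow X$ are isomorphic; both are the category of elements of the presheaf $X$. So $L(X)$ is exactly the colimit prescribing the pointwise left Kan extension of $g$ along $\eta_{\mc C}$, and this colimit exists in $\mc A$. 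Hence $L$ is the pointwise left Kan extension $\lan_{\eta_{\mc C}}g$ in $\CAT$.

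The remaining step, which I expect to be the main obstacle, is that this Kan extension in $\CAT$ is also the left Kan extension in $\LEX$. Given a lex functor $H\colon\T(\mc C)\to\mc A$ and a transformation $g\Rightarrow H\eta_{\mc C}$, the universal property in $\CAT$ yields a unique factorisation through $L$. Since $\LEX$ is locally full in $\CAT$, this factorisation is automatically a 2-cell of $\LEX$; the only requirement is that $L$ itself lies in $\LEX$, which we checked in the first step as a composite of lex functors. Thus the $\CAT$-universal property restricts to the $\LEX$ one. Should the pointwise description of $\lan_{\eta'_{\mc C}} g$ be unavailable for a large $\mc A$, I would instead argue abstractly with \Cref{rem:lans-give-morphisms}. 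For a lex functor $H \colon \T(\mc C)\to\mc A$, one compares $2$-cells $g \Rightarrow H\eta_{\mc C}$ with $2$-cells $L\Rightarrow H$ by passing through $\T'(\mc C)$, using that $\iota$ is fully faithful and that $\iota\cong\lan_{\eta_{\mc C}}\eta'_{\mc C}$; this is the usual ``Kan extensions compose'' argument.
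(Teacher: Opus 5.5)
Your argument is correct and is essentially the paper's. The paper packages the pointwise formula as the statement that $\lan_{\eta_{\mc C}}g$ is left adjoint to the nerve $\lan_g\yo^{\mc C}$ relative to $\T(\mc C)\hookrightarrow\Psh(\mc C)$, observes that precomposing the relative left adjoint $\lan_{\eta'_{\mc C}}g$ (relative to $\T'(\mc C)\hookrightarrow\Psh(\mc C)$) with $\T(\mc C)\hookrightarrow\T'(\mc C)$ yields such a relative left adjoint, and concludes by uniqueness of relative left adjoints, which is your identification $\eta'_{\mc C}\downarrow\iota X\cong\eta_{\mc C}\downarrow X$ in weighted-colimit form. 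Your version also makes explicit the existence claim and the passage from the $\CAT$- to the $\LEX$-universal property (lexness of the composite plus local fullness), which the paper leaves implicit; the fallback in your last paragraph is unnecessary, since the paper relies on the same pointwise input for possibly large $\mc A$.
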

\begin{proof}
    First note that, if it exists, $\lan_{\eta_{\mc C}}g$ is left adjoint to the nerve $\lan_g\yo^{\mc C} \colon \mc A \to \Psh(\mc C)$ relative to the inclusion $i \colon \T (\mc C) \hookrightarrow \Psh(\mc C)$ (cf.~\cite[Prop. A.6]{DILIBERTI2020106379}). Indeed, for $X \in \T(\mc C)$ and $a \in \mc A$:
    \begin{align*}
        \mc A(\lan_{\eta'_{\mc C}}g(X), a) & \cong \Psh\mc C \left( \T'\mc C(\eta'_{\mc C}(-), X) , \mc A ( g(-), a)\right)\\
        & \cong \Psh\mc C \left( i(X) , \mc A ( g(-), a)\right) \\
        & \cong \Psh\mc C \left( i(X) , \lan_{g}\yo^{\mc C}(a)\right) 
    \end{align*} 
    since the above left Kan extensions are computed pointwise in $\CAT$ (\Cref{prop:doctrines-via-lans-and-density}). Thus, we now prove that $\lan_{\eta_X'}g \circ \lan_{\eta_X}\eta_X'$ is indeed such a relative left adjoint, in which case $\lan_{\eta_X'}g \circ \lan_{\eta_X}\eta_X' \cong \lan_{\eta_{\mc C}}g$ follows by uniqueness of relative left adjoints (see, e.g.\, \cite[Lem.\ 5.7]{arkorFormalTheoryRelative2024}). 

    Note then that, with the same argument above, $\lan_{\eta'_{\mc C}} g$ --- which exists since $\mc A$ is a $\T'$-algebra --- is left adjoint to $\lan_g \yo^{\mc C}$ relative to the inclusion $\T' (\mc C)\hookrightarrow \Psh(\mc C)$. In the diagram below, the bottom triangle trivially commutes, where the inclusion $\T(\mc C) \hook \T'(\mc C)$ is described as $ \lan_{\eta_X}\eta_X'$ by \Cref{lem:inclusions-are-lans}, 
    \[\begin{tikzcd}[row sep =20pt]
    	{\mc C} && {\mc A} \\
    	& {\T'\mc C} \\
    	{\T \mc C} && {\Psh(\mc C)}
    	\arrow["g", from=1-1, to=1-3]
    	\arrow["{{\eta_{\mc C}'}}"{description}, hook, from=1-1, to=2-2]
    	\arrow["{{\eta_{\mc C}}}"', hook, from=1-1, to=3-1]
    	\arrow["{{\lan_g\yo^{\mc C}}}", from=1-3, to=3-3]
    	\arrow["{\lan_{\eta'_{\mc C}}g}"{description}, dashed, from=2-2, to=1-3]
    	\arrow[hook, from=2-2, to=3-3]
    	\arrow[hook, from=3-1, to=2-2]
    	\arrow[hook, from=3-1, to=3-3]
    \end{tikzcd}\]
    It directly follows that $\lan_{\eta_X'}g \circ \lan_{\eta_X}\eta_X'$ is left adjoint to $\lan_g \yo^{\mc C}$ relative to the inclusion $\T (\mc C)\hookrightarrow \Psh(\mc C)$.
\end{proof}

As promised, we can now show that an inclusion $\T \subseteq \T'$ of doctrines induces a forgetful 2-functor, in the converse direction, between the corresponding 2-categories of algebras: this recovers, for instance, the forgetful 2-functors $\mathsf{Coh} \to \mathsf{Reg}$, $\mathsf{Pretop} \to \mathsf{Coh}$, and $\mathsf{Geom} \to \mathsf{Coh}$.

\begin{prop}[Base change]\label{prop:forgetful-between-algebras}
    Let $\T$ and $\T'$ be doctrines such that $\T \subseteq \T'$. Then, there is a forgetful 2-functor $\ALG(\T') \to \ALG(\T)$.
\end{prop}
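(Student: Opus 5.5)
The forgetful 2-functor will be the identity on underlying data: a $\T'$-algebra $\mc A$ is sent to $\mc A$ itself, a $\T'$-algebra morphism to the same functor, and a natural transformation to itself. Since $\ALG(\T)$ and $\ALG(\T')$ are both locally full sub-2-categories of $\LEX$, 2-functoriality is then automatic. It only remains to check two things: that $\T'$-algebras are $\T$-algebras, and that $\T'$-algebra morphisms are $\T$-algebra morphisms.

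\emph{Objects.} Let $\mc A$ be a $\T'$-algebra and $g\colon \mc C\to\mc A$ a functor in $\LEX$ with $\mc C$ small. By \Cref{lem:kancellation-for-doctrines}, the left Kan extension $\lan_{\eta_{\mc C}} g$ exists in $\LEX$ and is given by $\lan_{\eta'_{\mc C}}g\circ \lan_{\eta_{\mc C}}\eta'_{\mc C}$. This is precisely the condition for $\mc A$ to be a $\T$-algebra.

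\emph{Morphisms.} Let $f\colon\mc A\to\mc B$ be a $\T'$-algebra morphism, so that $f\circ\lan_{\eta'_{\mc C}}g\cong \lan_{\eta'_{\mc C}}(fg)$ canonically. We must show that $f$ preserves $\lan_{\eta_{\mc C}}g$. Applying \Cref{lem:kancellation-for-doctrines} in both $\mc A$ and $\mc B$ gives
\[ f\circ \lan_{\eta_{\mc C}} g \cong f\circ\lan_{\eta'_{\mc C}}g\circ \lan_{\eta_{\mc C}}\eta'_{\mc C} \cong \lan_{\eta'_{\mc C}}(fg)\circ \lan_{\eta_{\mc C}}\eta'_{\mc C}\cong \lan_{\eta_{\mc C}}(fg). \]
The chain of isomorphisms alone is not enough. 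Preservation of a Kan extension means that the \emph{canonical} comparison 2-cell $\lan_{\eta_{\mc C}}(fg)\Rightarrow f\circ\lan_{\eta_{\mc C}}g$ is invertible, so we must verify that the composite isomorphism above is compatible with the universal 2-cells. This is the step I expect to need the most care.

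To handle it, I would use the relative-adjoint description from the proof of \Cref{lem:kancellation-for-doctrines}. The universal 2-cell of $\lan_{\eta_{\mc C}}g$ is obtained by pasting the universal cells of $\lan_{\eta'_{\mc C}}g$ and of $\lan_{\eta_{\mc C}}\eta'_{\mc C}$. Because the inclusion $\T(\mc C)\hookrightarrow\T'(\mc C)$ is fully faithful (\Cref{lem:inclusions-are-lans}), the unit 2-cell of $\lan_{\eta_{\mc C}}\eta'_{\mc C}$ is invertible. As a result, whiskering by $f$ commutes with this pasting, and the comparison cell for $f$ relative to $\eta_{\mc C}$ is the comparison cell for $f$ relative to $\eta'_{\mc C}$, whiskered by the inclusion. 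That cell is invertible by hypothesis, which completes the argument.

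An alternative route, which avoids the 2-cell bookkeeping, uses \Cref{rem:lans-give-morphisms}. There, a functor out of $\T(\mc C)$ is a $\T$-algebra morphism exactly when it is the left Kan extension of its restriction along $\eta_{\mc C}$. Combined with the pointwise formula from \Cref{prop:doctrines-via-lans-and-density}, the question reduces to checking, objectwise in $\T(\mc C)\subseteq\T'(\mc C)$, that $f$ preserves the colimits computing $\lan_{\eta'_{\mc C}}g$ at objects of $\T(\mc C)$. This holds because those are among the pointwise colimits that $f$ preserves as a $\T'$-algebra morphism.
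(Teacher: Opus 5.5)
Your proposal is correct and is essentially the paper's own proof: \Cref{lem:kancellation-for-doctrines} is applied once for objects and twice for morphisms, giving exactly the chain $f\circ\lan_{\eta_{\mc C}}g\cong\lan_{\eta'_{\mc C}}(fg)\circ\lan_{\eta_{\mc C}}\eta'_{\mc C}\cong\lan_{\eta_{\mc C}}(fg)$. The paper does not do the extra 2-cell bookkeeping you flag, and it is automatic here: because $\eta_{\mc C}$ is fully faithful, the unit 2-cells are invertible, so any isomorphism $f\circ\lan_{\eta_{\mc C}}g\cong\lan_{\eta_{\mc C}}(fg)$ forces the canonical comparison cell to be invertible, matching the iso-invariant criterion of \Cref{rem:lans-give-morphisms} (note that it is full faithfulness of $\eta_{\mc C}$, not of $\T(\mc C)\hookrightarrow\T'(\mc C)$, that makes those unit 2-cells invertible).
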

\begin{proof}
    The fact that each $\T'$-algebra is also a small $\T$-algebra follows directly from \Cref{lem:kancellation-for-doctrines}.  Consider then a $\T'$-algebra morphism $f \colon \mc A \to \mc B$; to see that it is also a $\T$-algebra morphism we need to show that, for any functor $g \colon \mc C \to \mc A$ in $\LEX$ with $\mc C$ small, the bottom triangle in the following diagram commutes.
    \[\begin{tikzcd}[sep = 30pt]
    \mc C & \mc A \\
    {\T (\mc C)} & \mc B
    \arrow["g", from=1-1, to=1-2]
    \arrow["{\eta_{\mc C}}"', from=1-1, to=2-1]
    \arrow["f", from=1-2, to=2-2]
    \arrow["{\lan_{\eta_{\mc C}}g}"{description}, dashed, from=2-1, to=1-2]
    \arrow["{\lan_{\eta_{\mc C}}(fg)}"', dashed, from=2-1, to=2-2]
    \end{tikzcd}\]
    Applying \Cref{lem:kancellation-for-doctrines} twice, we have:
    \[ f \circ \lan_{\eta_{\mc C}} g \cong f\circ \lan_{\eta'_{\mc C}}g\circ \lan_{\eta_{\mc C}}\eta'_{\mc C} \cong \lan_{\eta'_{\mc C}}(fg)\circ \lan_{\eta_{\mc C}}\eta'_{\mc C} \cong \lan_{\eta_{\mc C}}(fg). \qedhere \]
\end{proof}

\section{A Galois connection}\label{sec:galois}

Now that we fixed our definitions and notations regarding logics and doctrines, we can finally introduce the two main constructions of the paper, associating a doctrine to each logic and a logic to each doctrine. 
\[\begin{tikzcd}
	{\mathsf{Log^{\text{op}}}} && {\mathsf{Doc}}
	\arrow[""{name=0, anchor=center, inner sep=0}, "{\T^{-}}"', curve={height=12pt}, from=1-1, to=1-3]
	\arrow[""{name=1, anchor=center, inner sep=0}, "{\Hcal_{-}}"', curve={height=12pt}, from=1-3, to=1-1]
\end{tikzcd}\]
The main result and \emph{raison d'être} of this paper, \Cref{thm:galois-connection} below, is that these two constructions realise a Galois connection once restricted to a suitable notion of \emph{smallness} on both sides. This paves the way for a deeper study of the interplay between logics and doctrines and their mirroring properties, which will be the focus of the next sections.

\subsection{From logics to doctrines}\label{subsec:from-logic-to-doctrine}

First, let us describe how to associate a doctrine $\T^{\Hcal}$ to each logic $\Hcal$. This construction was originally introduced already in \cite[\S 5.2]{dilibertiConceptualCompletenessSubgeometric2026}; concretely, it relies on the observation that the \emph{object classifier} $\Set[\Obb]$, being the presheaf topos $[\Fin, \Set]$, is $\Hcal$-injective for any logic $\Hcal$. Thus, for a category $\mc C$ in $\lex$, we can single out a full subcategory of $\Psh(\mc C)$ by considering those presheaves which, seen as geometric morphisms $\Psh(\mc C) \to \Set[\Obb]$, are morphisms of $\Hcal$-injectives.

\begin{constr}[The doctrine associated to a logic, \protect{\cite[Constr.\ 5.2.3]{dilibertiLogicConcepts2category2025}}]\label{constr:doctrineoflogic}
    Let $\Hcal$ be a logic. We define a doctrine $\T^{\Hcal}$ by setting, for each category $\mc C$ in $\lex$, 
    \[ \T^{\Hcal}(\mc C) \coloneqq \WRInj(\Hcal)(\Psh(\mc C), \Set[\Obb]).\]
    That this definition gives rise to a doctrine is proved in \cite[\S 5.2.1]{dilibertiLogicConcepts2category2025}.
\end{constr}

\begin{rem}[The monad of syntax]
    Intuitively, $\T^{\Hcal}$ endows categories with the categorical structure prescribed `syntactically' by the logic $\Hcal$; for this reason, we also refer to $\T^{\Hcal}$ as the \emph{monad of syntax} of $\Hcal$. 
\end{rem}

\begin{exa}\label{exa:canonical-examples-are-from-logic}
\looseness=-1
    As shown in \cite[\S 3]{dilibertiConceptualCompletenessSubgeometric2026}, the doctrine $\T^{\Hmatte}$ associated to regular logic $\Hmatte$ is the doctrine $\T^{\text{ex}}$ of exact categories, and similarly the doctrine $\T^{\Hflat}$ associated to coherent logic $\Hflat$ is the doctrine $\T^{\text{pretop}}$ of pretopoi. Similar computations for the other logics in~\cref{exa:examples-of-logics} are also done in \emph{loc.\ cit.}
\end{exa}

\subsection{From doctrines to logics}\label{subsec:from-doctrine-to-logic}

Conversely, we now describe how to associate a logic $\Hcal_{\T}$ to each doctrine $\T$. Based on the empirical observation that many logics of interests are described by the property that the direct images of their geometric morphisms are morphisms of algebras for suitable doctrines --- for instance, $\Hflat$ coincides with the class of geometric morphisms whose direct images are morphisms of coherent categories --- we can define $\Hcal_{\T}$ as follows.

\begin{constr}[The logic associated to a doctrine]\label{const:logicofdoctrine}
    For a doctrine $\T$, we define a logic $\Hcal_{\T}$ as the class of geometric morphisms $f \colon \mc E \to \mc F$ such that $f_*$ is a $\T$-algebra morphism. 
\end{constr}

\begin{exa}\label{exa:canonical-examples-are-from-doctrines}
    The logic $\Hcal_{\T^{\text{reg}}}$ associated with the doctrine $\T^{\text{reg}}$ of regular categories coincides with regular logic $\Hmatte$, and similarly the logic $\Hcal_{\T^{\text{coh}}}$ associated with the doctrine $\T^{\text{coh}}$ of coherent categories coincides with coherent logic $\Hflat$. The other logics in~\Cref{exa:examples-of-logics} also arise in similar ways.
\end{exa}

\begin{rem}[Doctrines and conceptual completeness]
    Combining \Cref{exa:canonical-examples-are-from-logic} and \Cref{exa:canonical-examples-are-from-doctrines}, note that the doctrines $\T^{\Hmatte} $ and $\T^{\Hflat}$, respectively coinciding with $\T^{\text{ex}}$ and $\T^{\text{pretop}}$, can be seen as obtained going back and forth through the Galois connection starting from the doctrines $\T^{\text{reg}}$ and $\T^{\text{coh}}$:
    \[ \T^{\text{ex}} =  \T^{\Hmatte} = \T^{\Hcal_{\T^{\text{reg}}}}, \qquad \T^{\text{pretop}} = \T^{\Hflat} = \T^{\Hcal_{\T^{\text{coh}}}}.\]
    Thus, in hindsight, the Galois connection seems to enlarge a doctrine $\T$ to a doctrine whose algebras are precisely the $\T$-algebras explicitly admitting all the colimits implicitly preserved by $\T$-algebra morphisms. For instance, coherent functors between coherent categories preserve the finite coproducts and effective coequalisers which happen to exist in their domain, and a coherent category admitting those is precisely a pretopos. This phenomenon links the Galois connection to the idea of \emph{conceptual completeness} in the sense of \cite{makkaiFirstOrderCategorical1977}: we leave the development of this link, the surface of which we will scratch in \Cref{sec:reflection}, for future investigations.
\end{rem}

Working our way up towards the main result of this section, we now derive a useful characterisation of the logic $\Hcal_{\T}$ in the case where the doctrine $\T$ is \emph{smallness-preserving}, i.e.\ such that $\T(\mc C)$ is small for each category $\mc C$ in $\lex$. Concretely, we will describe $\Hcal_{\T}$ as the `left injectivity class' generated by a family of geometric morphisms of a particular form, that is, as the class of geometric morphisms along which morphisms in the family preserve right Kan extensions. 

To introduce this family, consider the unit $\eta \colon 1 \Rightarrow \T$ of $\T$ and, for each category $\mc C$ in $\lex$, denote by $\Sigma \eta_{\mc C} \colon \Psh(\mc C) \to \Psh(\T \mc C)$ the geometric morphism defined as follows:
\[ (\Sigma\eta_{\mc C})_* (-) = \ran_{\eta\op_{\mc C}}(-) \qquad (\Sigma \eta_{\mc C})^*(-) = - \circ \eta\op_{\mc C}\]
Note that, since $\eta_{\mc C}$ is fully-faithful, so is $(\Sigma\eta_{\mc C})_*$ (see, e.g., \cite[Prop.\ 3.7.3]{borceuxHandbookCategoricalAlgebra1994a}), meaning that $\Sigma\eta_{\mc C}$ is an embedding. We will need the following lemma.

\begin{lem}\label{lem:T-algebra-morphisms-and-factoring-geom-morphisms}
    Let $\T$ be a smallness-preserving doctrine. For a category $\mc C$ in $\lex$ and a topos $\mc F$, let $G \colon \T \mc C \to \mc F$ be a functor in $\LEX$. The following are equivalent:
    \begin{enumerate}
        \item $G$ is a $\T$-algebra morphism;
        \item the geometric morphism $g\colon \mc F \to \Psh(\T \mc C)$ induced by $G$ factors through $\Sigma \eta_{\mc C} \colon \Psh(\mc C) \hookrightarrow \Psh(\T \mc C)$.
\[\begin{tikzcd}[sep = 20pt, ampersand replacement=\&]
	\& {\psh(\mc C)} \\
	{\mc F} \& {\psh(\ms T{\mc C})}
	\arrow["{\Sigma\eta_{\mc C}}", hook, from=1-2, to=2-2]
	\arrow[dashed, from=2-1, to=1-2]
	\arrow["g"', from=2-1, to=2-2]
\end{tikzcd}\]
    \end{enumerate}
    \begin{proof}
        To show $(1)\Rightarrow (2)$, note first that $G \cong \lan_{\eta_{\mc C}} G|_{\mc C}$, where $G|_{\mc C} \coloneqq G \circ \eta_{\mc C} \colon \mc C \to \mc F$: this follows since, in the diagram below (in $\LEX$), the lower triangle commutes as $G$ is a $\T$-algebra morphism, and moreover $\lan_{\eta_{\mc C}}\eta_{\mc C} \cong \id_{\T \mc C}$ (cf.\ \cite[Def.\ 2]{walkerYonedaStructuresKZ2018}).
    \[\begin{tikzcd}[column sep = 40pt]
        {\mc C} & {\T \mc C} \\
        {\T \mc C} & {\mc F}
        \arrow["{\eta_{\mc C}}", hook, from=1-1, to=1-2]
        \arrow["{\eta_{\mc C}}"', hook, from=1-1, to=2-1]
        \arrow["G", from=1-2, to=2-2]
        \arrow["{\lan_{\eta_{\mc C}}\eta_{\mc C}}"{description}, dashed, from=2-1, to=1-2]
        \arrow["{\lan_{\eta_{\mc C}}G|_{\mc C}}"', dashed, from=2-1, to=2-2]
    \end{tikzcd}\]
    Then, by construction and by composing left Kan extensions:
    \begin{align*}
       g^*
       & \cong \lan_{ \yo^{\T \mc C} }G \\
       & \cong \lan_{ \yo^{\T \mc C}  \eta_{\mc C}}G|_{\mc C} \\
       & \cong \lan_{(\Sigma \eta_{\mc C})_*  \yo^{\mc C}}G|_{\mc C} \\
       & \cong \lan_{(\Sigma\eta_{\mc C})_*}(\lan_{\yo^{\mc C}}G|_{\mc C}) \\
       & \cong (\lan_{\yo^{\mc C}}G|_{\mc C}) \circ (\Sigma\eta_{\mc C})^*
    \end{align*}
    which means that $g$ factors through $\Sigma \eta_{\mc C} \colon \Psh(\mc C) \hookrightarrow \Psh(\T \mc C)$ via a geometric morphism $\mc F \to \Psh(\mc C)$ whose inverse image is given by $\lan_{\yo^{\mc C}}G|_{\mc C}$.

        To show that $(2)\Rightarrow(1)$, note that if $g \cong (\Sigma\eta_{\mc C}) \circ h$ for some geometric morphism $h \colon \mc F \to \Psh(\mc C)$, then necessarily $h^* \yo^{\mc C} \cong G|_{\mc C}$ since the diagram below commutes, and hence $h^* \cong \lan_{\yo^{\mc C}} G|_{\mc C}$.
\[\begin{tikzcd}[column sep = 35]
	& {\Psh(\mc C)} & {\mc C} \\
	{\mc F} & {\Psh(\T\mc C)} & {\T\mc C}
	\arrow["{h^*}"', from=1-2, to=2-1]
	\arrow["{\yo^{\mc C}}"', hook', from=1-3, to=1-2]
	\arrow["{\eta_{\mc C}}", hook', from=1-3, to=2-3]
	\arrow["{(\Sigma\eta_{\mc C})^*}"', from=2-2, to=1-2]
	\arrow["{g^*}", from=2-2, to=2-1]
	\arrow["{\yo^{\T\mc C}}", hook', from=2-3, to=2-2]
	\arrow["G"{description},out=220, in=-40,distance=30pt, from=2-3, to=2-1]
\end{tikzcd}\]
The above chain of isomorphisms thus shows that $\lan_{\yo^{\T \mc C}} G \cong \lan_{\yo^{\T \mc C}}(\lan_{\eta_{\mc C}} G|_{\mc C} )$, and hence that $G \cong \lan_{\eta_{\mc C}} G|_{\mc C}$ since $\lan_{\yo^{\T \mc C}}(-)$ is fully-faithful, meaning that $G$ is a $\T$-algebra morphism. 
    \end{proof}
\end{lem}

\begin{prop}[$\Hcal_{\T}$ as a left injectivity class]\label{prop:HofT-and-direct-images-being-Talgebra-morphisms}
    Let $\T$ be a smallness-preserving doctrine. For any geometric morphism $f \colon \mc E \to \mc F$, the following are equivalent:
    \begin{enumerate}
    \item $f_*$ is a $\T$-algebra morphism, i.e.\ $f \in \Hcal_{\T}$;
    \item for each category $\mc C$ in $\lex$, the embedding $\Sigma \eta_{\mc C}\colon \Psh (\mc C) \hookrightarrow \Psh (\T \mc C)$ preserves right Kan extensions along $f$, in the sense that the lower triangle of the following diagram commutes for any $x \colon \mc E \to \psh(\mc C)$:
    \[
    \begin{tikzcd}[column sep = 40pt]
        \mc E \ar[d, "f"'] \ar[r, "x"] & \psh(\mc C) \ar[d, hook, "\Sigma\eta_{\mc C}"] \\
        \mc F \ar[r, dashed, "\ran_f(\Sigma\eta_{\mc C} \circ x)"'] \ar[ur, dashed, "\ran_fx"description] & \psh(\ms T{\mc C})
    \end{tikzcd}
    \]
    \end{enumerate}
\begin{proof}
    To show $(1)\Rightarrow(2)$, fix a category $\mc C$ in $\lex$ and consider any geometric morphism $x \colon \Ecal \to \Psh(\mc C)$. Note that the functor $f_* x^* y \colon \T \mc C \to \mc F$, where $y \coloneqq (\Sigma\eta_{\mc C})^* \yo \colon \T \mc C \to \Psh (\mc C)$, is a $\T$-algebra morphism by composition:
    \begin{itemize}
    \item[i.] $f_*$ is a $\T$-algebra morphism by hypothesis;
    \item[ii.]$x^*$ is a $\T$-algebra morphism as it is a left adjoint;
    \item[iii.] $y$ is a $\T$-algebra morphism as it coincides with the inclusion $\T(\mc C)\hookrightarrow \Psh(\mc C)$ (see \Cref{rem:lans-give-morphisms}).
    \end{itemize} 
    \looseness=-1
    By \Cref{lem:T-algebra-morphisms-and-factoring-geom-morphisms}, it follows that the geometric morphism $\mc F \to \Psh(\T \mc C)$ induced by $f_*x^*y$ factors through $\Sigma \eta_{\mc C} \colon \Psh(\mc C) \hookrightarrow \Psh(\T \mc C)$. In other words, this means that $\Sigma\eta_{\mc C}\circ \ran_f x \cong \ran_f(\Sigma\eta_{\mc C} \circ x)$ (cf.\ the proof of \cite[Prop.\ 4.0.1]{dilibertiLogicConcepts2category2025}).

    To show $(2)\Rightarrow(1)$, fix a category $\mc C$ in $\lex$ and consider the following situation, for any functor $G \colon \mc C \to \Ecal$ in $\LEX$:
    \[\begin{tikzcd}[column sep = 40]
	{\mc C} & \Ecal \\
	{\T \mc C} & {\mc F}
	\arrow["G", from=1-1, to=1-2]
	\arrow["{\eta_{\mc C}}"', hook, from=1-1, to=2-1]
	\arrow["f_*", from=1-2, to=2-2]
	\arrow["{\lan_{\eta_{\mc C}}G}"{description}, dashed, from=2-1, to=1-2]
	\arrow["{\lan_{\eta_{\mc C}}(f_* G)}"', dashed, from=2-1, to=2-2]
\end{tikzcd}\]
We need to show that $f_*  \lan_{\eta_{\mc C}}G \cong \lan_{\eta_{\mc C}}(f_*G)$. 

Note first that $G$ induces a geometric morphism $g \colon \Ecal \to \Psh(\mc C)$ such that, since by hypothesis $f \in \Hcal_{\T}$, the lower triangle in the following diagram commutes:
\[\begin{tikzcd}[column sep = 40pt]
	{\mc E} & {\Psh (\mc C)} \\
	{\mc F} & {\Psh(\T \mc C)}
	\arrow["g", from=1-1, to=1-2]
	\arrow["f"', from=1-1, to=2-1]
	\arrow["{\Sigma\eta_{\mc C}}", hook, from=1-2, to=2-2]
	\arrow["{\ran_fg}"{description}, dashed, from=2-1, to=1-2]
	\arrow["{\ran_f(\Sigma\eta_{\mc C}\circ g)}"', dashed, from=2-1, to=2-2]
\end{tikzcd}\]
In other words, the geometric morphism $\mc F \to \Psh(\T \mc C)$ induced by the functor $f_* g^* (\Sigma\eta_{\mc C})^* \yo^{\T \mc C} \colon \T \mc C \to \mc F$ factors through $\Sigma \eta_{\mc C} \colon \Psh(\mc C) \hookrightarrow \Psh(\T \mc C)$, and hence $f_* g^* (\Sigma\eta_{\mc C})^* \yo^{\T \mc C}$ is a $\T$-algebra morphism by \Cref{lem:T-algebra-morphisms-and-factoring-geom-morphisms}.

Note now that $f_* g^* (\Sigma\eta_{\mc C})^* \yo^{\T \mc C} \cong f_* \lan_{\eta_{\mc C}}G$. This follows since two $\T$-algebra morphisms on $\T \mc C$ coincide if and only if so do their restrictions to $\mc C$, and indeed:
\[  f_* g^* (\Sigma\eta_{\mc C})^* \yo^{\T \mc C} \eta_{\mc C} \cong  f_*g^*\yo^{\mc C} \cong    f_* G \cong f_* (\lan_{\eta_{\mc C}}G )\eta_{\mc C}.\] 
Thus, since $f_* \lan_{\eta_{\mc C}}G$ is a $\T$-algebra morphism, we conclude that
\[ f_* \lan_{\eta_{\mc C}}G\cong \lan_{\eta_{\mc C}} ( f_* \lan_{\eta_{\mc C}}G \eta_{\mc C}) \cong \lan_{\eta_{\mc C}} (f_* G). \qedhere \]
\end{proof}
\end{prop}

\begin{rem}
    \looseness=-1
    Condition (2) in the above proposition stems from the observation contained in~\cite[Lem.~5.2.6]{dilibertiLogicConcepts2category2025} that, for a doctrine $\T$ induced by a logic $\Hcal$ in the sense of \Cref{constr:doctrineoflogic}, the unit functor $\eta_{\mc C} \colon \mc C \hookrightarrow \T (\mc C)$ induces a geometric morphism $\Sigma \eta_{\mc C} \colon \Psh(\mc C) \hookrightarrow \Psh(\T \mc C)$ which is always a morphism of $\Hcal$-injectives. The above proposition is then showing that the geometric morphisms of the form $\Sigma \eta_{\mc C}$ are, in fact, morphisms of $\Hcal_{\T}$-injectives --- a stronger condition, since we will see that $\Hcal \subseteq \Hcal_{\T^{\Hcal}}$ --- and moreover they are sufficient to determine the logic $\Hcal_{\T}$.
\end{rem}

\begin{rem}
    In particular, for each category $\mc C$ in $\lex$, the geometric embedding $\Sigma \eta_{\mc C} \colon \Psh(\mc C) \hook \Psh(\T\mc C)$ is an \emph{$\Hcal_{\T}$-presentation of $\Psh(\mc C)$} in the sense of \cite[Lem.\ 2.1.9]{dilibertiLogicConcepts2category2025}, since it lies in $\WRInj(\Hcal_{\T})$ by \Cref{prop:HofT-and-direct-images-being-Talgebra-morphisms}.
\end{rem}

Recall from \cite[Def.\ 1.16]{dilibertiKZPseudomonadsKanInjectivity2024} that, for a logic $\Hcal$, we can define its \emph{saturation} as the class $\Sat(\Hcal)$ of geometric morphisms with respect to which all topoi formally in $\Hcal$ are injective, and similarly for their morphisms. More precisely, $\Sat(\mc H)$ contains those morphisms $f$ such that $\WRInj(\Hcal) \subseteq \WRInj(\{f\})$, where this notation means that there is a locally fully faithful 2-functor $\WRInj(\Hcal) \to \WRInj(\{f\})$ commuting over $\Topoi$. Clearly $\Hcal \subseteq \Sat(\Hcal)$; we say that $\Hcal$ is \emph{saturated} if $\Hcal = \Sat(\Hcal)$.

\begin{lem}
    For a smallness-preserving doctrine $\T$, the logic $\mc H_{\ms T}$ is saturated.
    \begin{proof}
        Fix a geometric morphism $f \in \Sat(\Hcal_{\T})$ and consider any category $\mc C$ in $\lex$. Since $\Sigma \eta_{\mc C} \colon \Psh(\mc C) \hook \Psh(\T \mc C)$ lies in $\WRInj(\Hcal_{\T})$, and $\WRInj(\Hcal_{\T}) \subseteq \WRInj(\{f\})$, in particular it preserves right Kan extensions along $f$, which means that $f \in \Hcal_{\T}$. 
    \end{proof}
\end{lem}

\begin{exa}\label{exa:hflat-saturated}
    \looseness=-1
    The four logics in~\cref{exa:examples-of-logics} are saturated by~\cref{exa:canonical-examples-are-from-doctrines}.
\end{exa}

\subsection{The Galois connection between logics and doctrines}
In this section we show how the operations $\Hcal_{-}$ and $\T^{-}$ restrict to a Galois connection between \emph{bounded logics} and \emph{bounded doctrines} (\Cref{thm:galois-connection}), in the sense of the following definitions.

\begin{defn}[Bounded logics]
A logic $\Hcal$ is \emph{bounded} if, for each topos $\Xcal$ formally in $\Hcal$, the category $\WRInj(\Hcal)(\Xcal, \Set[\Obb])$ is small.
\end{defn}

In fact, restricting the previous definition to the object classifier $\Set[\Obb]$ is not completely necessary, as the following lemma shows.

\begin{lem}
    A logic $\Hcal$ is bounded if and only if, for each topos $\Xcal$ formally in $\Hcal$ and each category $\mc D$ in $\lex$, the category $\WRInj(\Hcal)(\Xcal, \Psh \mc D)$ is small.
\begin{proof}
    Since representables trivially preserve all right Kan extensions, postcomposition with each $\yo_d \colon \Psh(\mc D)\to\Set[\Obb]$ for $d \in \mc D$ yields a functor 
    \[ \WRInj(\Hcal)(\Xcal, \Psh \mc D) \to \prod_{d\in \mc D} \WRInj(\Hcal)(\Xcal, \Set[\Obb]).\] 
    Moreover, clearly two geometric morphisms $f,g\colon \Xcal \to \Psh(\mc D)$ coincide if and only if so do $\yo_d f$ and $\yo_d g$ for each $d\in \mc D$: therefore, the above functor is fully-faithful. Thus, if $\Hcal$ is bounded, then the product $\prod_{d\in \mc D} \WRInj(\Hcal)(\Xcal, \Set[\Obb])$ is small since $\mc D$ is small, and hence so is $ \WRInj(\Hcal)(\Xcal, \Psh \mc D)$; the converse direction is trivial.
\end{proof}
\end{lem}

\begin{defn}[Bounded doctrines]\label{def:bounded-doctrine}
    A doctrine $\T$ is \emph{bounded} if 
    \begin{enumerate}
        \item $\T$ is smallness-preserving, i.e.\ $\T (\mc C)$ is small for every category $\mc C$ in $\lex$, and
        \item $\Hcal_{\T}$ is bounded, i.e.\ $\WRInj(\Hcal_{\T})(\Xcal, \Psh \mc D)$ is small for each topos $\Xcal$ formally in $\Hcal_{\T}$ and each category $\mc D$ in $\lex$.
    \end{enumerate}
\end{defn} 

\begin{rem}
    Note that both notions of boundedness are stricter than the two notions of the same name of \cite[Def.\ 5.2.6 and 5.2.1]{dilibertiLogicConcepts2category2025}:
    \begin{enumerate}
        \item[i.] in the case of a logic $\Hcal$, in \emph{loc.\ cit.} the condition that $\WRInj(\Hcal) (\Xcal, \Set[\Obb])$ be small is required only for topoi $\Xcal$ of the form $\Psh(\mc C)$ for $\mc C$ in $\lex$;
        \item[ii.] in the case of a doctrine $\T$, in \emph{loc.\ cit.} $\T$ is merely required to be smallness-preserving.
    \end{enumerate}
\end{rem}

Towards the proof of the Galois connection, we shall first construct what will be the \emph{unit} of the adjunction. For this, in terms of size, smallness-preservation suffices.

\begin{lem}\label{lem:unit-galois-connection}
    For any smallness-preserving doctrine $\T$, it holds that $\T \subseteq \T^{\Hcal_{\T}}$.
\begin{proof}
We need to show that, for each category $\mc C$ in $\lex$, the category $\T (\mc C)$ is a (full) subcategory of $\T^{\Hcal_{\T}}(\mc C)\coloneqq \WRInj(\Hcal_{\T})(\Psh(\mc C), \Set[\Obb])$, meaning that each $X \in \T (\mc C)\subseteq \Psh(\mc C)$, seen as a geometric morphism $\Psh(\mc C)\to \Set[\Obb]$, is a morphism of $\Hcal_{\T}$-injectives. To this aim, we will make use of \Cref{prop:HofT-and-direct-images-being-Talgebra-morphisms}. Consider then the following situation, for any $f \colon \Ecal \to \mc F$ in $\Hcal_{\T}$ and any $x \colon \Ecal\to \Psh(\mc C)$:
    \[\begin{tikzcd}[column sep = 40]
	{\mc E} &[5pt] {\psh(\mc C)} &[-10pt] { {\Set[\mbb O]}} \\
	{\mc F} & {\psh(\ms T\mc C)}
	\arrow["x", from=1-1, to=1-2]
	\arrow["f"', from=1-1, to=2-1]
	\arrow["X", from=1-2, to=1-3]
	\arrow["{{\Sigma\eta_{\mc C}}}", hook, from=1-2, to=2-2]
	\arrow["{{\ran_fx}}"{description}, dashed, from=2-1, to=1-2]
	\arrow["{{\ran_f(\Sigma \eta_{\mc C} \circ x)}}"', dashed, from=2-1, to=2-2]
	\arrow["{\yo_X}"', bend right=30, from=2-2, to=1-3]
    \end{tikzcd}\]
    Note that $\yo_X \circ \Sigma\eta_{\mc C} \cong X$ since $\eta_{\mc C} \colon \mc C \hookrightarrow \T \mc C$ is the corestriction of $\yo \colon \mc C \hookrightarrow \Psh (\mc C)$. Therefore, since $\Sigma \eta_{\mc C}$ preserves right Kan extensions along $f \in \Hcal_{\T}$ while $\yo_X$ preserves all right Kan extensions as it is representable, it follows that
    \[ X \circ \ran_f x \cong \yo_X \circ \Sigma\eta_{\mc C} \circ \ran_f x \cong \ran_f (\yo_X \circ \Sigma\eta_{\mc C} \circ x) \cong \ran_f (X\circ x),\]
    meaning that $X \in \T^{\Hcal_{\T}}(\mc C)$.
\end{proof}
\end{lem}

Next, we shall construct what will be the \emph{counit} of the adjunction. For this, we will make use of the following characterisation of the logic $\Hcal_{\T^{\Hcal}}$, for a {bounded} logic $\Hcal$. As an immediate corollary, we will also deduce the non-trivial fact that the doctrine $\T^{\Hcal}$ is bounded whenever $\Hcal$ is; the dual result, instead, is prescribed by \Cref{def:bounded-doctrine}.

\begin{prop}\label{prop:describing-HofTofH}
    Let $\mathcal{H}$ be a bounded logic and let $h \colon \Ecal \to \mc F$ be a geometric morphism. The following are equivalent:
    \begin{enumerate}
        \item $h \in \mathcal{H}_{\mathsf{T}^{\Hcal}}$;
        \item for every category $\mc C$ in $\lex$ and every $X \in \T^{\Hcal}(\mc C)$, the geometric morphism $X \colon \Psh (\mc C) \to \Set[\Obb]$ preserves right Kan extensions along $h$;
        \item for every pair of categories $\mc C, \mc D$ in $\lex$, every morphism of $\Hcal$-injectives $g \colon \Psh(\mc C) \to \Psh (\mc D)$ preserves right Kan extensions along $h$.
    \end{enumerate}
\end{prop}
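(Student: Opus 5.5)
We prove the cycle of implications, using \Cref{prop:HofT-and-direct-images-being-Talgebra-morphisms} as the bridge. Its hypotheses need $\T^{\Hcal}$ to be smallness-preserving. This holds because $\T^{\Hcal}(\mc C) = \WRInj(\Hcal)(\Psh(\mc C),\Set[\Obb])$ and $\Hcal$ is bounded; here $\Psh(\mc C)$ is formally in $\Hcal$, as $\Set[\Obb]$ and all presheaf topoi on lex categories are $\Hcal$-injective, as used in \Cref{constr:doctrineoflogic}.

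For $(1)\Rightarrow(2)$, suppose $h\in\Hcal_{\T^{\Hcal}}$. By \Cref{prop:HofT-and-direct-images-being-Talgebra-morphisms}, every $\Sigma\eta_{\mc C}\colon \Psh(\mc C)\hookrightarrow\Psh(\T^{\Hcal}\mc C)$ preserves right Kan extensions along $h$. Given $X\in\T^{\Hcal}(\mc C)$, we factor $X\cong \yo_X\circ\Sigma\eta_{\mc C}$ exactly as in \Cref{lem:unit-galois-connection}. The representable $\yo_X$ preserves all right Kan extensions, so the composite $X$ preserves right Kan extensions along $h$.

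For $(2)\Rightarrow(3)$, let $g\colon\Psh(\mc C)\to\Psh(\mc D)$ be a morphism of $\Hcal$-injectives. For each $d\in\mc D$, the composite $\yo_d\circ g\colon \Psh(\mc C)\to\Set[\Obb]$ is again a morphism of $\Hcal$-injectives, so it lies in $\T^{\Hcal}(\mc C)$; by $(2)$ it preserves right Kan extensions along $h$. Take $x\colon\Ecal\to\Psh(\mc C)$. We want to compare $g\circ\ran_h x$ with $\ran_h(g x)$. The latter exists because $\Psh(\mc D)$ is a presheaf topos, and right Kan extensions along $h$ into presheaf topoi are computed via $h_*$ on inverse images of representables. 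Both sides are determined by their composites with all $\yo_d$, as in the proof of the boundedness lemma. Moreover, $\yo_d$ preserves all right Kan extensions, so
\[ \yo_d\circ g\circ \ran_h x \;\cong\; \ran_h(\yo_d g x)\;\cong\; \yo_d\circ\ran_h(gx). \]
The main delicate point is upgrading these pointwise isomorphisms to a single isomorphism of geometric morphisms, and checking that it is compatible with the universal 2-cells. The plan is to argue at the level of inverse images: $(\ran_h y)^*$ restricted to $\mc D$ is $h_*\, y^*\, \yo^{\mc D}$. The isomorphisms above are then natural in $d$, because they are all induced by the universal 2-cells.

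For $(3)\Rightarrow(1)$, by \Cref{prop:HofT-and-direct-images-being-Talgebra-morphisms} it suffices to show that each $\Sigma\eta_{\mc C}\colon\Psh(\mc C)\hookrightarrow\Psh(\T^{\Hcal}\mc C)$ preserves right Kan extensions along $h$. This follows from $(3)$ with $\mc D=\T^{\Hcal}\mc C$, which is small and lex, once we know that $\Sigma\eta_{\mc C}$ is a morphism of $\Hcal$-injectives. That fact is \cite[Lem.~5.2.6]{dilibertiLogicConcepts2category2025}, recalled in the remark after \Cref{prop:HofT-and-direct-images-being-Talgebra-morphisms}.
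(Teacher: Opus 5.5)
Your proof is correct and uses the same ingredients as the paper: the factorisation $X\cong\yo_X\circ\Sigma\eta_{\mc C}$, the characterisation of $\Hcal_{\T^{\Hcal}}$ as a left injectivity class, and testing geometric morphisms into presheaf topoi against representables. What differs is how you arrange the implications.

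The paper proves $(1)\Leftrightarrow(2)$ and $(2)\Leftrightarrow(3)$. There $(3)\Rightarrow(2)$ is immediate from $\Set[\Obb]\simeq\Psh(\Fin\op)$. The step $(2)\Rightarrow(1)$ is a direct check that $\Sigma\eta_{\mc C}\circ\ran_h x\cong\ran_h(\Sigma\eta_{\mc C}\circ x)$ after composing with each $\yo_X$, $X\in\T^{\Hcal}(\mc C)$. It uses only $\yo_X\circ\Sigma\eta_{\mc C}\cong X$ and hypothesis $(2)$.

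You instead close a cycle with $(3)\Rightarrow(1)$, instantiating $(3)$ at $g=\Sigma\eta_{\mc C}$ and $\mc D=\T^{\Hcal}(\mc C)$. This saves one implication and shows that the generating maps $\Sigma\eta_{\mc C}$ of $\Hcal_{\T^{\Hcal}}$ are themselves instances of $(3)$. The cost is an imported fact, namely that $\Sigma\eta_{\mc C}$ is a morphism of $\Hcal$-injectives, which the paper's route never needs. That fact is itself proved by the same representable test, now with $h\in\Hcal$. In effect, your $(2)\Rightarrow(3)\Rightarrow(1)$ is the paper's direct $(2)\Rightarrow(1)$ routed through the general case.

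The point you flag as delicate disappears if you work with the canonical comparison cell $\sigma\colon g\circ\ran_h x\Rightarrow\ran_h(gx)$. Composed with the invertible comparison cell of $\yo_d$, the whiskering $\yo_d\sigma$ is the comparison cell of $\yo_d\circ g$, hence invertible by $(2)$. Moreover, $\yo_d\sigma$ is just the component of $\sigma$ at the representable $\yo_d$ on inverse images. So $\sigma$ is invertible with no separate naturality argument; the paper simply asserts this reduction.
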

\begin{proof}
First, we show that $(1) \Leftrightarrow (2)$. Suppose $h \in \Hcal_{\T^{\Hcal}}$; fix a category $\mc C$ in $\lex$ and some $X \in \T^{\Hcal}( \mc C)$. Note that the doctrine $\T^{\Hcal}$ is smallness-preserving since $\Hcal$ is bounded; thus, unpacking definitions, we have that the geometric morphism $x \colon \Psh(\mc C) \to \Set[\Obb]$ can be factored as:
\[\begin{tikzcd}[sep = small]
	{\Psh (\mc C)} && {\Set[\Obb]} \\
	& {\Psh(\T^{\Hcal}\mc C)}
	\arrow["X", from=1-1, to=1-3]
	\arrow["{\Sigma\eta_{\mc C}}"', hook, from=1-1, to=2-2]
	\arrow["{\yo_X}"', from=2-2, to=1-3]
\end{tikzcd}\]
The embedding $\Sigma \eta_{\mc C}$ preserves right Kan extensions along $h \in \Hcal_{\T^{\Hcal}}$ by \Cref{prop:HofT-and-direct-images-being-Talgebra-morphisms}, while $\yo_X$ preserves all right Kan extensions as it is representable; thus, $X$ preserves right Kan extensions along $h$.

Conversely, suppose that for every $\mc C$ in $\lex$, every morphism of $\Hcal$-injectives $\Psh (\mc C) \to \Set[\Obb]$ preserves right Kan extensions along $h$. Consider the following situation, for any $f \colon \mc E \to \Psh(\mc C)$:
\[\begin{tikzcd}[column sep= 45pt]
	\Ecal & {\Psh (\mc C)} \\
	\Fcal & {\Psh(\T^{\Hcal}\mc C)}
	\arrow["f", from=1-1, to=1-2]
	\arrow["h"', from=1-1, to=2-1]
	\arrow["{\Sigma \eta_{\mc C}}", hook, from=1-2, to=2-2]
	\arrow["{\ran_h f}"{description}, dashed, from=2-1, to=1-2]
	\arrow["{\ran_h(\Sigma\eta_{\mc C}\circ f)}"', dashed, from=2-1, to=2-2]
\end{tikzcd}\]
To see that $h \in \Hcal_{\T^{\Hcal}}$, by \Cref{prop:HofT-and-direct-images-being-Talgebra-morphisms} we need to show that $\Sigma \eta_{\mc C} \circ \ran_h f \cong \ran_h (\Sigma \eta_{\mc C} \circ f)$. Clearly this holds if and only if the two agree on representables, i.e.\ if and only if $\yo_X \circ \Sigma \eta_{\mc C} \circ \ran_h f \cong \yo_X \circ \ran_h (\Sigma \eta_{\mc C} \circ f)$ for any $X \in \T^{\Hcal}(\mc C) $.
\[\begin{tikzcd}[column sep = 45pt]
	\Ecal & {\Psh (\mc C)} &[-10pt] {\Set[\Obb]} \\
	\Fcal & {\Psh(\T^{\Hcal}\mc C)}
	\arrow["f", from=1-1, to=1-2]
	\arrow["h"', from=1-1, to=2-1]
	\arrow["x", from=1-2, to=1-3]
	\arrow["{\Sigma \eta_{\mc C}}", hook, from=1-2, to=2-2]
	\arrow["{\ran_h f}"{description}, dashed, from=2-1, to=1-2]
	\arrow["{\ran_h(\Sigma\eta_{\mc C}\circ f)}"', dashed, from=2-1, to=2-2]
	\arrow["{\yo_X}"', bend right = 30, from=2-2, to=1-3]
\end{tikzcd}\]
As above, we know that the triangle on the left commutes. Therefore, since by hypothesis $X$ preserves right Kan extensions along $h$, we have:
\[ \yo_X \circ \Sigma \eta_{\mc C} \circ \ran_h f \cong X \circ \ran_h f \cong \ran_h (X \circ f) \cong \ran_h (\yo_X \circ \Sigma\eta_{\mc C} \circ f) \cong \yo_X \circ \ran_h (\Sigma\eta_{\mc C} \circ f).  \]

We now show that $(2)\Leftrightarrow (3)$. Clearly $(3)\Rightarrow(2)$ since $\Set[\Obb] = \Psh(\ms{FinSet}\op)$; conversely, consider any $f \colon \Ecal \to \Psh(\mc C)$. Again, to show that $g \circ \ran_h f \cong \ran_h(g\circ f)$ we can reduce to show that $\yo_d \circ g \circ \ran_h f \cong \yo_d \circ \ran_h(g\circ f)$ for any $d \in \mc D$. 
\[\begin{tikzcd}[column sep = 45pt]
	\Ecal & {\Psh (\mc C)} &[-10pt] { {\Set[\Obb]}} \\
	\Fcal & {\Psh(\mc D)}
	\arrow["f", from=1-1, to=1-2]
	\arrow["h"', from=1-1, to=2-1]
	\arrow["{g^*(\yo_d)}", from=1-2, to=1-3]
	\arrow["g", from=1-2, to=2-2]
	\arrow["{{\ran_h f}}"{description}, dashed, from=2-1, to=1-2]
	\arrow["{{\ran_h(g\circ f)}}"', dashed, from=2-1, to=2-2]
	\arrow["{{\yo_d}}"', bend right = 30, from=2-2, to=1-3]
\end{tikzcd}\]
Note then that the geometric morphism $g^*(\yo_d) \cong \yo_d \circ g \colon \Psh(\mc C) \to \Set[\Obb]$ lies in $\T^{\Hcal}(\Psh \mc C)$ since $g$ is a morphism of $\Hcal$-injectives by assumption. Therefore, since it preserves right Kan extensions along $h$ by hypothesis, we have:
\[ \yo_d \circ g \circ \ran_h f \cong g^*(\yo_d) \circ \ran_h f \cong \ran_h (g^*(\yo_d)\circ f) \cong \ran_h( \yo_d \circ g \circ f) \cong\yo_d \circ \ran_h(g \circ f).  \]

\end{proof}

\begin{cor}\label{cor:counit-galois-connection}
    For any bounded logic $\Hcal$ it holds that $\Hcal \subseteq \Hcal_{\T^{\Hcal}}$.
\end{cor}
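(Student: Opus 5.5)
The plan is to use characterisation (3) of \Cref{prop:describing-HofTofH}. Fix $h \colon \Ecal \to \Fcal$ in $\Hcal$. To conclude $h \in \Hcal_{\T^{\Hcal}}$, it suffices to show the following: for all categories $\mc C, \mc D$ in $\lex$, every morphism of $\Hcal$-injectives $g \colon \Psh(\mc C) \to \Psh(\mc D)$ preserves right Kan extensions along $h$. Condition (2) would serve equally well, with $\mc D = \Fin\op$, so that $\Psh(\mc D) = \Set[\Obb]$.

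The argument is essentially a tautology once the definitions are unpacked. First, I would note that every presheaf topos $\Psh(\mc C)$ formally belongs to $\Hcal$. This is the fact, used implicitly in \Cref{constr:doctrineoflogic} and established in \cite{dilibertiLogicConcepts2category2025}, that the hom-categories $\WRInj(\Hcal)(\Psh(\mc C), \Set[\Obb])$ make sense. I expect it to come from presheaf topoi being right Kan injective in $\Topoi$ with respect to all geometric morphisms: the right Kan extension $\ran_h x$ for $x \colon \Ecal \to \Psh(\mc C)$ corresponds to the flat functor $h_* x^* \yo$, which is again flat because $h_*$ preserves finite limits. So for $h \in \Hcal$ and any $x \colon \Ecal \to \Psh(\mc C)$, the extension $\ran_h x$ exists.

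Next, a morphism of $\Hcal$-injectives $g \colon \Psh(\mc C) \to \Psh(\mc D)$ is by definition a geometric morphism between topoi formally in $\Hcal$ that preserves right Kan extensions along every morphism of $\Hcal$. Since $h$ is such a morphism, we get $g \circ \ran_h x \cong \ran_h(g\circ x)$ directly. This is exactly condition (3) of \Cref{prop:describing-HofTofH}, and boundedness of $\Hcal$ is what allows us to invoke that proposition. Hence $h \in \Hcal_{\T^{\Hcal}}$.

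The only point needing care is that ``morphism of $\Hcal$-injectives'' means preservation of Kan extensions along exactly the members of $\Hcal$, and that presheaf topoi are $\Hcal$-injective. Both are definitional or already recorded in \cite{dilibertiLogicConcepts2category2025}, so I expect no real obstacle; the substance lies in \Cref{prop:describing-HofTofH}.
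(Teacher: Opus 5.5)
Your proposal is correct and matches the paper's (implicit) argument: the corollary is stated as an immediate consequence of \Cref{prop:describing-HofTofH}. For $h \in \Hcal$, condition (2) or (3) holds by definition, since morphisms of $\Hcal$-injectives preserve right Kan extensions along members of $\Hcal$. Your side remark that presheaf topoi are injective with respect to all geometric morphisms (with $\ran_h x$ corresponding to the lex functor $h_* x^* \yo$) is the standard fact the paper uses without comment.
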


\begin{rem}
    In particular, $\Sat(\Hcal) \subseteq \Hcal_{\T^{\Hcal}}$ for any bounded logic $\Hcal$, since $\Hcal_{\T^{\Hcal}}$ is saturated.
\end{rem}

\begin{cor}
    For any bounded logic $\Hcal$, the doctrine $\T^{\Hcal}$ is bounded.
    \begin{proof}
        \looseness=-1
        Clearly $\T^{\Hcal}$ is smallness-preserving, so we simply need to show that $\Hcal_{\T^{\Hcal}}$ is bounded. This follows by $\Hcal\subseteq \Hcal_{\T^{\Hcal}}$: indeed, for each topos $\Xcal$ formally in $\Hcal_{\T^{\Hcal}}$ and each category $\mc C$ in $\lex$, $\WRInj(\Hcal_{\T^{\Hcal}})(\Xcal, \Psh\mc C)$ is a full subcategory of  $\WRInj(\Hcal)(\Xcal, \Psh\mc C)$, which is small as $\Hcal$ is bounded. 
    \end{proof}
\end{cor}

It is then immediate to see that the constructions $\Hcal \mapsto \T^{\Hcal}$ and $\T \mapsto \Hcal_{\T}$ are antitone as maps between the poset $\mathsf{Log_{bnd}}$ of bounded logics and the poset $\mathsf{Doc_{bnd}}$ of bounded doctrines: we can now see that they form a Galois connection.

\begin{thm}[The Galois connection]\label{thm:galois-connection}
    There is an adjunction between posets:
\[\begin{tikzcd}
	{\mathsf{Log_{bnd}^{\text{op}}}} && {\mathsf{Doc_{bnd}}}
	\arrow[""{name=0, anchor=center, inner sep=0}, "{\T^{-}}"', curve={height=12pt}, from=1-1, to=1-3]
	\arrow[""{name=1, anchor=center, inner sep=0}, "{\Hcal_{-}}"', curve={height=12pt}, from=1-3, to=1-1]
	\arrow["\dashv"{anchor=center, rotate=-90}, draw=none, from=1, to=0]
\end{tikzcd}\]
\end{thm}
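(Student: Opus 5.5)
The plan is to verify the usual hom-set criterion for an adjunction between posets. Namely, for a bounded logic $\Hcal$ and a bounded doctrine $\T$ we show
\[ \T \subseteq \T^{\Hcal} \quad\Longleftrightarrow\quad \Hcal \subseteq \Hcal_{\T}, \]
which is exactly $\T \le \T^{\Hcal}$ in $\mathsf{Doc_{bnd}}$ iff $\Hcal_{\T} \le \Hcal$ in $\mathsf{Log^{op}_{bnd}}$. Before this, we note that both maps are well defined on the bounded parts. $\Hcal_{\T}$ is bounded by \Cref{def:bounded-doctrine}, and $\T^{\Hcal}$ is bounded by the corollary just proved. We also record that both maps are antitone. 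If $\Hcal \subseteq \Hcal'$, then $\WRInj(\Hcal') \to \WRInj(\Hcal)$ is locally full and faithful over $\Topoi$, so $\T^{\Hcal'}(\mc C) \subseteq \T^{\Hcal}(\mc C)$. If $\T \subseteq \T'$, then by \Cref{prop:forgetful-between-algebras} every $\T'$-algebra morphism is a $\T$-algebra morphism; since topoi are $\T'$-algebras (they are cocomplete, and the Kan extensions are pointwise by \Cref{prop:doctrines-via-lans-and-density}), this gives $\Hcal_{\T'} \subseteq \Hcal_{\T}$.

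Given antitonicity, the hom-set criterion reduces to the unit and counit inequalities, which are already available. For ($\Leftarrow$): if $\Hcal \subseteq \Hcal_{\T}$, then antitonicity gives $\T^{\Hcal_{\T}} \subseteq \T^{\Hcal}$, and \Cref{lem:unit-galois-connection} gives $\T \subseteq \T^{\Hcal_{\T}}$, since $\T$ is smallness-preserving. For ($\Rightarrow$): if $\T \subseteq \T^{\Hcal}$, then $\Hcal_{\T^{\Hcal}} \subseteq \Hcal_{\T}$, and \Cref{cor:counit-galois-connection} gives $\Hcal \subseteq \Hcal_{\T^{\Hcal}}$, since $\Hcal$ is bounded. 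Composing these inclusions yields the equivalence.

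The main obstacle is the antitonicity of $\T \mapsto \Hcal_{\T}$. The point is to make sure that \Cref{prop:forgetful-between-algebras} really applies to direct image functors $f_*$ between topoi, i.e.\ that topoi are $\T'$-algebras, so that "being a $\T'$-algebra morphism" makes sense for $f_*$ in the first place. If this turns out to be delicate, there is a fallback that works for smallness-preserving doctrines. Use \Cref{prop:HofT-and-direct-images-being-Talgebra-morphisms}: $f \in \Hcal_{\T'}$ means each $\Sigma\eta'_{\mc C}$ preserves right Kan extensions along $f$. Now factor $\Sigma\eta'_{\mc C}$ as $\Sigma\eta_{\mc C}$ followed by the geometric morphism $\Psh(\T\mc C) \to \Psh(\T'\mc C)$ induced by the fully faithful inclusion $\T\mc C \hookrightarrow \T'\mc C$. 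This second morphism is also an embedding, so its direct image is fully faithful. Preservation by the composite, together with the fact that the embedding reflects the relevant isomorphisms, should then give preservation by $\Sigma\eta_{\mc C}$. This is the step I would check most carefully.
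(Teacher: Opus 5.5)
Your proposal is correct and matches the paper's proof. The paper also takes the unit $\T\subseteq\T^{\Hcal_\T}$ from \Cref{lem:unit-galois-connection}, and it proves $\T\subseteq\T^{\Hcal}\Rightarrow\Hcal\subseteq\Hcal_\T$ exactly as in your ($\Rightarrow$) direction, via \Cref{cor:counit-galois-connection} and base change (\Cref{prop:forgetful-between-algebras}) applied to $f_*$, while treating as immediate the antitonicity that you check explicitly. Your concern that topoi must be $\T'$-algebras is already presupposed by the definition of $\Hcal_\T$ (and \Cref{prop:forgetful-between-algebras} is stated for $\ALG$), so the fallback through factoring $\Sigma\eta'_{\mc C}$ is unnecessary, although it would also work because fully faithful embeddings reflect right Kan extensions.
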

\begin{proof}
    \Cref{lem:unit-galois-connection} gives the unit of the adjunction, $\T \subseteq \T^{\Hcal_{\T}}$, for each bounded doctrine $\T$. We now show that $\Hcal_{-}\dashv \T^{-}$ by exhibiting the universal property of the unit: concretely, this amounts to showing that, for a bounded doctrine $\T$ and a bounded logic $\Hcal$, if $\T \subseteq \T^{\Hcal}$ then $\Hcal \subseteq \Hcal_{\T}$. Let then $f \colon \Ecal \to \Fcal$ in $\Hcal$; since $\Hcal \subseteq \Hcal_{\T^{\Hcal}}$, by \Cref{prop:HofT-and-direct-images-being-Talgebra-morphisms} we know that $f_*$ is a $\T^{\Hcal}$-algebra morphism. In particular, since $\T \subseteq \T^{\Hcal}$, by \Cref{prop:forgetful-between-algebras} we know that $f_*$ is also a $\T$-algebra morphism, meaning that $f \in \Hcal_{\T}$.
\end{proof}

\begin{exa}[Fixpoints]
    Let us look at the fixpoints of the above connection, on both sides.
\begin{itemize}
    \item[i.]Coherent logic $\Hflat$ and the doctrine $\T^{\mr{pretop}}$ of pretopoi are fixpoints: indeed, as we recalled in \Cref{exa:canonical-examples-are-from-logic} we have that 
    \[ \T^{\mr{pretop}}  = \T^{\Hflat} \quad \mbox{ and }\quad  \Hflat = \Hcal_{\T^{\mr{pretop}}}.\]
    Similarly, the other logics in~\cref{exa:examples-of-logics} and the corresponding doctrines are also fixpoints. 
    \item[ii.]The logic $\Hcal_{\beta} \subsetneq \Hflat$ of \emph{$\beta$-maps} introduced in \cite{dilibertiGeometryCoherentTopoi2022} is not a fixpoint, since by \cite[Thm.\ 3.2.5]{dilibertiConceptualCompletenessSubgeometric2026} we know that $\T^{\Hcal_{\beta}}$ also coincides with $\T^{\mr{pretop}}$.
    \item[iii.]\looseness=-1The doctrine $\T^{\mr{coh}}\subsetneq \T^{\mr{pretop}}$ of coherent categories is not a fixpoint, since clearly $\Hcal_{\T^{\mr{coh}}} = \Hflat$. In particular, it is not of the form $\T^{\Hcal}$ for any logic $\Hcal$. 
\end{itemize}

\end{exa}

\begin{war}
Below, all logics and all doctrines will be assumed to be bounded.
\end{war}

\section{Mirror symmetry}\label{sec:mirror-symmetry}

The Galois connection between logics and doctrines established in the previous section can be understood as the foundation of a much broader phenomenon that we shall now explore: a \emph{mirror symmetry} between the world of logics and the world of doctrines.
\[\hspace{-1.5em}\begin{tikzcd}
	&&& {} &&& \\
	& \Hcal &&&& \T \\
	{\WRInj(\Hcal)} && {\Hcal_\T} && {\T^\Hcal} && {\Alg(\T)} \\
	&&& {}
	\arrow[squiggly, no head, from=1-4, to=4-4]
	\arrow[curve={height=-12pt}, dotted, from=2-2, to=3-5]
	\arrow[curve={height=12pt}, dotted, from=2-6, to=3-3]
\end{tikzcd}\]

The starting point is that, as partly witnessed already by the construction of logics from doctrines, the general framework developed for logics in \cite{dilibertiLogicConcepts2category2025, dilibertiConceptualCompletenessSubgeometric2026} can be mirrored for docrines, with analogous constructions of all the relevant characters: \emph{syntactic categories} and \emph{classifying topoi}. In particular, with reference to the above picture, we shall see that syntactic categories admit a more intrinsic description \textit{on the left}, on the side of logics, while classifying topoi admit a more intrinsic description \textit{on the right}, on the side of doctrines.

As is already evident from \cite{dilibertiLogicConcepts2category2025, dilibertiConceptualCompletenessSubgeometric2026}, the two worlds then interact with each other --- for instance, with the Diaconescu-type theorem for a logic $\Hcal$, between $\Alg(\T^\Hcal)$ and $\WRInj(\Hcal)$, displayed below on the left. In this section, we will unveil the `mirror image' of this result for a doctrine $\T$, below on the right: this will be a first 
evidence of the fact that these interactions always come in pairs.

\[\begin{tikzcd}[ampersand replacement=\&]
	\&\&\& {} \&[-4pt]\&\& \\
	\\
	{\Alg(\T)^{\op}} \&\& {\WRInj(\Hcal_\T)} \&\& {\Alg(\T^\Hcal)^{\op}} \&\& {\WRInj(\Hcal)} \\
	\\
	\&\&\& {}
	\arrow[squiggly, no head, from=1-4, to=5-4]
	\arrow[""{name=0, anchor=center, inner sep=0}, "{\Cl_\T}"', curve={height=24pt}, from=3-1, to=3-3]
	\arrow[""{name=1, anchor=center, inner sep=0}, "{\Syn_\T}"', curve={height=24pt}, from=3-3, to=3-1]
	\arrow[""{name=2, anchor=center, inner sep=0}, "{\Cl^\Hcal}"', curve={height=24pt}, from=3-5, to=3-7]
	\arrow[""{name=3, anchor=center, inner sep=0}, "{\Syn^\Hcal}"', curve={height=24pt}, from=3-7, to=3-5]
	\arrow["\dashv"{anchor=center, rotate=-90}, draw=none, from=1, to=0]
	\arrow["\dashv"{anchor=center, rotate=-90}, draw=none, from=3, to=2]
\end{tikzcd}\]

This \textit{entanglement} between the two sides of the mirrors, both relating logics and doctrines in a symmetrical way, produces a flavourful algebra that, while technical, will be helpful in establishing the results of the next sections and in understanding the deep connection between the two frameworks.

\subsection{Syntactic categories}We begin by discussing the notion of syntactic category. As we said, this construction is more native on the side of logics, and in this sense it was already defined in \cite[Constr.\ 5.1.1]{dilibertiLogicConcepts2category2025}. For the sake of keeping the paper self-contained, we shall briefly recall that construction. 

\begin{constr}[$\Hcal$-syntactic categories]
	For a logic $\Hcal$ and a topos $\Xcal$ formally in $\Hcal$, we define the \emph{$\Hcal$-syntactic category} of $\Xcal$ as its full subcategory 
	\[ \Syn^{\Hcal}(\Xcal) \coloneqq \WRInj(\Xcal,\Set[\Obb]).  \]
	\looseness=-1 This definition extends to a 2-functor $\Syn^\Hcal \colon \WRInj(\Hcal)^{\op} \to \lex$; see  \cite[\S 5.1]{dilibertiLogicConcepts2category2025}.
\end{constr}

\begin{constr}[$\T$-syntactic categories]
	For a doctrine $\T$, we define \emph{$\T$-syntactic categories} by setting 
	\[ \Syn_{\T} \coloneqq \Syn^{\Hcal_\T} \colon \WRInj(\Hcal_{\T}) \to \lex .\]
\end{constr}

The reader that is interested in getting some intuition for this notion is directed towards \cite[\S 5.1]{dilibertiLogicConcepts2category2025}, where many syntactic categories are explicitly computed, and syntactic categories are equipped with a natural topology resembling many usual syntactic sites from the tradition of topos theory. The following result was shown, instead, in \cite[Cor.\ 2.1.2]{dilibertiConceptualCompletenessSubgeometric2026}.

\begin{thm}\label{thm:syn-h-lands-in-th-algebras}
The 2-functor $\Syn^{\Hcal} \colon \WRInj(\Hcal)\op \to \lex$ lands in $\Alg(\T^{\Hcal})$.
\end{thm}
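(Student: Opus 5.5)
We must show two things: for each topos $\Xcal$ formally in $\Hcal$, the category $\Syn^{\Hcal}(\Xcal)=\WRInj(\Hcal)(\Xcal,\Set[\Obb])$ is a small $\T^{\Hcal}$-algebra; and for each morphism of $\Hcal$-injectives $g\colon \Xcal\to\Ycal$, precomposition $g^*\colon \Syn^{\Hcal}(\Ycal)\to\Syn^{\Hcal}(\Xcal)$ is a $\T^{\Hcal}$-algebra morphism. Smallness is boundedness of $\Hcal$. Finite limits are inherited from $\Xcal$: $\Syn^{\Hcal}(\Xcal)\subseteq\Xcal$ is closed under finite limits because finite limits of objects of $\Xcal$ correspond to geometric morphisms $\Xcal\to\Psh(\mc D)\to\Set[\Obb]$, with the second map representable.

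The plan is to verify the algebra condition directly. Fix a small $\mc C$ in $\lex$ and a lex functor $G\colon \mc C\to\Syn^{\Hcal}(\Xcal)$. Composing with the inclusion into $\Xcal$ gives a lex functor, hence a geometric morphism $x_G\colon \Xcal\to\Psh(\mc C)$ with $x_G^*\yo^{\mc C}\cong G$. The candidate left Kan extension is
\[ \lan_{\eta_{\mc C}}G \coloneqq x_G^*\circ i, \qquad i\colon \T^{\Hcal}(\mc C)\hookrightarrow\Psh(\mc C). \]
Explicitly, $X\in\T^{\Hcal}(\mc C)$, viewed as $X\colon\Psh(\mc C)\to\Set[\Obb]$, is sent to $X\circ x_G$. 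We must check that $X\circ x_G$ lies in $\Syn^{\Hcal}(\Xcal)$, i.e.\ is a morphism of $\Hcal$-injectives. This holds because $X$ is one by definition of $\T^{\Hcal}$, and $x_G$ is one: by the lemma describing $\WRInj(\Hcal)(\Xcal,\Psh\mc D)$ through postcomposition with the $\yo_d$, a geometric morphism into $\Psh(\mc C)$ preserves the relevant right Kan extensions as soon as each $\yo_c\circ x_G=G(c)$ does, and these lie in $\Syn^{\Hcal}(\Xcal)$ by assumption. Since $\Syn^{\Hcal}(\Xcal)$ is full in $\Xcal$, this is the key closure step.

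Next we show that $x_G^*\circ i$ is genuinely $\lan_{\eta_{\mc C}}G$ in $\LEX$, landing in the full subcategory $\Syn^{\Hcal}(\Xcal)$. Here $x_G^* = \lan_{\yo}G$ is computed pointwise, and $\T^{\Hcal}(\mc C)$ is a full subcategory of $\Psh(\mc C)$ containing the representables. Hence $x_G^*\circ i$ restricted along $\eta_{\mc C}$ is $G$, and it is the pointwise left Kan extension $\lan_{\eta_{\mc C}}G$ computed in $\Xcal$. It remains a left Kan extension in the full subcategory $\Syn^{\Hcal}(\Xcal)$ because its values lie there, and it is lex since $x_G^*$ and $i$ are. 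Equivalently, for $\Xcal$ itself we obtain a left adjoint to $\eta$, matching the description of small algebras.

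For morphisms, let $g\colon\Xcal\to\Ycal$ in $\WRInj(\Hcal)$. Then $g^*$ restricts to the syntactic categories, and
\[ g^*\circ x_G^*\circ i \cong (x_G g)^*\circ i = x_{g^*G}^*\circ i, \]
so $g^*$ preserves these Kan extensions. The extension to 2-cells is automatic. The main obstacle is the closure step: showing that $X\circ x_G$ is a morphism of $\Hcal$-injectives. This requires that $x_G$ preserve right Kan extensions along $\Hcal$-maps, which I expect to reduce componentwise via the representables $\yo_c$, as in the boundedness lemma. One must check that this reduction is valid for right Kan extensions and not merely for the hom-categories.
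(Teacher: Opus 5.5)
Your argument is correct. The paper gives no proof of its own here; it defers to \cite[Cor.\ 2.1.2]{dilibertiConceptualCompletenessSubgeometric2026}. Your direct route is the natural one and is assembled from ingredients the paper develops elsewhere.

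The step you single out as the main obstacle is exactly \Cref{prop:pre-diaconescu-logics}: a lex functor $G\colon\mc C\to\Xcal$ lands in $\Syn^{\Hcal}(\Xcal)$ if and only if the induced $x_G\colon\Xcal\to\Psh(\mc C)$ is a morphism of $\Hcal$-injectives. The componentwise reduction you expected is valid for the Kan-extension condition itself, not merely for hom-categories, for two reasons:
\begin{enumerate}
\item Whiskering the canonical comparison $x_G\circ\ran_f y\Rightarrow\ran_f(x_G y)$ with $\yo_c$ gives the comparison for $G(c)$, up to the isomorphism $\yo_c\circ\ran_f(x_G y)\cong\ran_f(\yo_c x_G y)$. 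That isomorphism exists because each $\yo_c$ preserves all right Kan extensions.
\item A 2-cell between geometric morphisms into $\Psh(\mc C)$ is invertible as soon as its components at representables are, since inverse images are cocontinuous.
\end{enumerate}
Your identification of $x_G^*\circ i$ with the pointwise $\lan_{\eta_{\mc C}}G$ is the same observation as the last clause of \Cref{lem:inclusions-are-lans}: the comma categories agree because $i$ is fully faithful. The morphism part is fine, since $g^*$ is cocontinuous.

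One small repair is needed. Your justification that $\Syn^{\Hcal}(\Xcal)$ is closed under finite limits is circular as written. To know that $\Xcal\to\Psh(\mc D)$ is a morphism of $\Hcal$-injectives, you would already need the limit object to lie in $\Syn^{\Hcal}(\Xcal)$. Argue instead as follows: $A\in\Syn^{\Hcal}(\Xcal)$ if and only if the canonical map $(\ran_f y)^*A\to f_*y^*A$ is invertible for every $f\in\Hcal$ and every $y\colon\Ecal\to\Xcal$. Both sides are lex in $A$ and the map is natural in $A$, so the class of such $A$ is closed under finite limits. In any case, the paper takes $\Syn^{\Hcal}\colon\WRInj(\Hcal)\op\to\lex$ as already given, so this point, like smallness, is not really part of the statement's content.
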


\begin{rem}
    In particular, this means that $\Syn_{\T}$ lands in $\Alg(\T^{\Hcal_{\T}})$: we will henceforth identify $\Syn_{\T}$ with the 2-functor below, obtained by composing $\Syn^{\Hcal_{\T}}$ with the forgetful 2-functor $\Alg(\T^{\Hcal_{\T}}) \to \Alg(\T)$ induced by the inclusion $\T \subseteq \T^{\Hcal_{\T}}$.
\[\begin{tikzcd}[sep= small]
	{\WRInj(\Hcal_{\T})\op} && {\Alg(\T)} \\
	& {\Alg(\T^{\Hcal_{\T}})}
	\arrow["{\Syn_{\T}}", from=1-1, to=1-3]
	\arrow["{\Syn^{\Hcal_{\T}}}"', from=1-1, to=2-2]
	\arrow[from=2-2, to=1-3]
\end{tikzcd}\]
\end{rem}

\subsection{Classifying topoi}
We now move to the notion of classifying topos. Here, the construction is more native on the side of doctrines, which generalises what was done for logics in~\cite[Const.\ 5.3.4]{dilibertiLogicConcepts2category2025}.

\begin{constr}[$\T$-classifying topoi]\label{constr:t-classifying-topos}
	For a doctrine $\T$ and a small $\T$-algebra $\mc A$ with structural map $\alpha \colon \T(\mc A) \to \mc A$, we define the \emph{$\T$-classifying topos} of $\mc A$ as the topos $\Cl_{\T}(\mc A)$ inverter in $\Topoi$ of the family of 2-cells 
	\[\begin{tikzcd}
	{\Psh(\mc A)} & {\Set[\Obb]}
    	\arrow[""{name=0, anchor=center, inner sep=0}, "X", curve={height=-12pt}, from=1-1, to=1-2]
    	\arrow[""{name=1, anchor=center, inner sep=0}, "{\yo_{\alpha (X)}}"', curve={height=12pt}, from=1-1, to=1-2]
    	\arrow["{\delta_X}", between={0.2}{0.8}, Rightarrow, from=0, to=1]
    \end{tikzcd}\]
	for each $X \in \T(\mc A) \subseteq \Psh(\mc A)$, each of which is induced by the adjunction $\alpha \dashv \eta_{\mc A} $. 
\end{constr}

\begin{prop}
    The $\T$-classifying topos construction extends to a 2-functor
    \[ \Cl_{\ms T} \colon \Alg(\ms T)\op \to \Topoi. \]
\end{prop}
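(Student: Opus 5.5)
The plan is to define $\Cl_{\T}$ on 1-cells and 2-cells by the universal property of inverters, after first reformulating $\Cl_{\T}(\mc A)$ as a full subcategory of $\Topoi(\mc E, \Psh(\mc A))$. By the 2-dimensional universal property of the inverter, geometric morphisms $\mc E \to \Cl_{\T}(\mc A)$ correspond to geometric morphisms $x \colon \mc E \to \Psh(\mc A)$ such that each whiskered 2-cell $\delta_X x$ is invertible. In terms of inverse images, $x$ corresponds to a flat functor $x^*\yo \colon \mc A \to \mc E$. Moreover $X \circ x$ corresponds to $x^*(X) \cong \lan_{\eta_{\mc A}}(x^*\yo)(X)$, since $x^*$ is cocontinuous and $X$ is a colimit of representables computed pointwise (\Cref{prop:doctrines-via-lans-and-density}). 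Similarly, $\yo_{\alpha(X)}\circ x$ corresponds to $x^*\yo(\alpha X)$. So the condition says exactly that the comparison $\lan_{\eta_{\mc A}}(x^*\yo) \Rightarrow x^*\yo\circ\alpha$ is invertible, i.e.\ that the lex functor $x^*\yo \colon \mc A \to \mc E$ is a $\T$-algebra morphism (\Cref{rem:lans-give-morphisms}). Hence
\[ \Topoi(\mc E, \Cl_{\T}(\mc A)) \simeq \ALG(\T)(\mc A, \mc E)\cap \mathrm{Flat}(\mc A,\mc E), \]
which is pseudonatural in $\mc E$. In other words, $\Cl_{\T}(\mc A)$ classifies flat $\T$-algebra morphisms out of $\mc A$.

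For a $\T$-algebra morphism $F \colon \mc A \to \mc B$, take $\Psh(F) \colon \Psh(\mc B) \to \Psh(\mc A)$, the geometric morphism with inverse image $\lan_{\yo}(\yo F)$. For any $y \colon \mc E \to \Cl_{\T}(\mc B) \hookrightarrow \Psh(\mc B)$, the composite $\Psh(F)\circ y$ corresponds to the flat functor $y^*\yo F$. This is again a $\T$-algebra morphism, as a composite of algebra morphisms. So $\Psh(F)$ restricted to $\Cl_{\T}(\mc B)$ inverts the cells $\delta_X$ for $\mc A$, and by the universal property it factors essentially uniquely through $\Cl_{\T}(\mc A)$, giving $\Cl_{\T}(F)$. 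For a natural transformation $\theta \colon F \Rightarrow F'$, whiskering gives a 2-cell between the relevant composites into $\Psh(\mc A)$. Since inverters are full sub-2-categories on hom-categories (the inclusion $\Cl_{\T}(\mc A) \hookrightarrow \Psh(\mc A)$ is representably fully faithful), this 2-cell lifts uniquely to $\Cl_{\T}(F) \Rightarrow \Cl_{\T}(F')$. The direction is contravariant on 1-cells, and covariance on 2-cells should be checked against the orientation of geometric transformations: a transformation $F \Rightarrow F'$ induces one between inverse-image functors, which matches the $(-)\op$ in the statement.

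Pseudofunctoriality then follows from the essential uniqueness of factorisations through inverters together with the pseudofunctoriality of $\Psh(-)$ on lex functors. The step I expect to be the main obstacle is the identification of the whiskered cells $\delta_X x$ with the Kan-extension comparison. This requires checking that the 2-cell $\delta_X$ built from the counit of $\alpha \dashv \eta_{\mc A}$ corresponds, under $x^*$, precisely to the canonical map $\lan_{\eta_{\mc A}}(x^*\yo)\Rightarrow x^*\yo\,\alpha$. I would do this componentwise: write $X$ as the colimit of representables over its category of elements, and use that $\alpha$ is itself $\lan_{\eta_{\mc A}}\id_{\mc A}$, so that $x^*\yo\,\alpha$ receives the induced cocone.
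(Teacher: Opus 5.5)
Your argument is correct, but it is organised differently from the paper's. The paper handles only the 1-cells, and does so locally on the generating cells. Since $(\mc P f)^*$ is left Kan extension along $f$, whiskering $X$ with $\mc P f$ gives $\T f(X)$. Whiskering $\yo_{\alpha X}$ gives $\yo_{f\alpha X}\cong\yo_{\alpha'\T f(X)}$ by the algebra-morphism condition. So $\delta_X\,\mc P f$ is identified with $\delta'_{\T f(X)}$, which $\Cl_{\T}(\mc A')$ already inverts, and the restriction of $\mc P f$ factors through the inverter.

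You instead first prove the classifying property $\Topoi(\mc E,\Cl_{\T}(\mc A))\simeq\ALG(\T)(\mc A,\mc E)$, and then get the action on 1-cells from closure of algebra morphisms under composition. That classifying property is exactly the paper's later \Cref{prop:pre-diaconescu-doctrines}. The paper proves it by the same identification of $x^*(\delta_X)$ with the comparison $\lan_{\eta_{\mc A}}(x^*\yo)\Rightarrow x^*\yo\,\alpha$ that you flag as the main obstacle. Your componentwise sketch of that step, via naturality of the unit of $\alpha\dashv\eta_{\mc A}$ over the canonical colimit, is the right one.

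The paper's route is cheaper here: it never has to treat an arbitrary $x\colon\mc E\to\Psh(\mc A)$, only the generators $\delta_X$. Yours pays for that identification in general. In exchange it yields the Diaconescu-type correspondence at the same stroke, and it makes the action on 2-cells and the pseudofunctoriality formal consequences of representability. The paper's proof leaves those points implicit.

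One small remark: intersecting with flat functors is redundant. Algebra morphisms are lex by definition, and $\mc A$ has finite limits, so they are automatically flat.
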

\begin{proof}
	Let $f \colon \mc A \to \mc A'$ be a morphism of (small) $\T$-algebras, witnessed by structural maps $\alpha \colon \T(\mc A) \to \mc A$ and $\alpha' \colon \T(\mc A') \to \mc A'$. By definition, we then have that 
	    \[ f \alpha X \cong \alpha'(\ms Tf)(X). \]
	Moreover, recall that  $\ms Tf$ acts by left Kan extension along $f$; thus, for each $X \in \T(\mc A)$ the following diagram commutes,
\[\begin{tikzcd}
	{\psh(\mc A')} && \\
	&& {\Set[\mbb O]} \\
	{\psh(\mc A)}
	\arrow[""{name=0, anchor=center, inner sep=0}, "{\T f (X)}", shift left=2, curve={height=-6pt}, from=1-1, to=2-3]
	\arrow[""{name=1, anchor=center, inner sep=0}, "{\yo_{\alpha'\T f(X)}}"{description}, shift right, curve={height=6pt}, from=1-1, to=2-3]
	\arrow["{\mc P f}"', from=1-1, to=3-1]
	\arrow[""{name=2, anchor=center, inner sep=0}, "X"{description}, shift left, curve={height=-6pt}, from=3-1, to=2-3]
	\arrow[""{name=3, anchor=center, inner sep=0}, "{\yo_{\alpha X}}"', shift right=2, curve={height=6pt}, from=3-1, to=2-3]
	\arrow["{\delta'_{\T f (X)}}"', between={0.2}{0.8}, Rightarrow, nfold, from=0, to=1]
	\arrow["{\delta_X}"', between={0.2}{0.8}, Rightarrow, nfold, from=2, to=3]
\end{tikzcd}\]
where $\mc P f \colon \Psh(\mc A') \to \Psh(\mc A)$ is the geometric morphism whose inverse image is given by left Kan extension along $f$. Therefore, by the definition of $\T$-classifying topoi as inverters, $\mc Pf$ induces a geometric morphism $\Cl_{\ms T}(\mc A')\to \Cl_{\ms T}(\mc A)$.
\end{proof}

\begin{constr}[$\Hcal$-classifying topoi]\label{asfor}
	For a logic $\Hcal$, we define \emph{$\Hcal$-classifying topoi} by setting
	\[ \Cl^{\Hcal} \coloneqq \Cl_{\T^{\Hcal}} \colon \Alg(\T^{\Hcal})\op \to \Topoi. \]
\end{constr}

\begin{rem}\label{rem:relative-completions}
    Our construction of the classifying topos of small $\ms T$-algebras is effectively equivalent to the \emph{relative completions} of~\cite[\S7.1]{garnerLexColimits2012}. 
\end{rem}

\looseness=-1 A proof of the following result, mirroring \Cref{thm:syn-h-lands-in-th-algebras}, will be given in the next subsection. 

\begin{thm}\label{thm:cl-t-lands-in-ht-injectives}
    The 2-functor $\Cl_{\T} \colon \Alg(\T)\op \to \Topoi$ lands in $\WRInj(\Hcal_{\T})$.
\end{thm}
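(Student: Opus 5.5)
The plan is to describe $\Cl_{\T}(\mc A)$ through its \emph{generalised points}, and then obtain the required right Kan extensions directly by pushing points forward along direct images. Throughout, geometric morphisms $\Ecal \to \Psh(\mc A)$ are identified with flat functors $\mc A \to \Ecal$. Since $\mc A$ is lex, these are simply the functors $\mc A \to \Ecal$ in $\LEX$, and geometric transformations correspond to natural transformations between them, with the appropriate variance.

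\textbf{Step 1 (points of the classifying topos).} I will show that, for every topos $\Ecal$, composing with the inverter map $\Cl_{\T}(\mc A) \to \Psh(\mc A)$ gives an equivalence between $\Topoi(\Ecal, \Cl_{\T}(\mc A))$ and the full subcategory of $\LEX(\mc A, \Ecal)$ spanned by the $\T$-algebra morphisms. Here $\Ecal$ counts as a $\T$-algebra: it is a $\Psh$-algebra, since it is cocomplete and lex, and $\T \subseteq \Psh$, so \Cref{prop:forgetful-between-algebras} applies.

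By the universal property of an inverter, a geometric morphism $g \colon \Ecal \to \Psh(\mc A)$ with inverse image induced by $G \colon \mc A \to \Ecal$ factors through $\Cl_{\T}(\mc A)$ precisely when each whiskered 2-cell $\delta_X g$ is invertible, and the inverter is locally fully faithful. I will unwind $X \circ g$ and $\yo_{\alpha(X)} \circ g$ as objects of $\Ecal$:
\begin{itemize}
\item the first is $g^*(X) = \lan_{\yo}G(X)$, which equals $\lan_{\eta_{\mc A}}G(X)$ because the Kan extensions are pointwise by \Cref{prop:doctrines-via-lans-and-density};
\item the second is $G(\alpha X)$.
\end{itemize}
The induced comparison $\lan_{\eta_{\mc A}}G(X) \to G\alpha(X)$ is then the canonical map. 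It is invertible for all $X$ exactly when $G\alpha \cong \lan_{\eta_{\mc A}}G$, that is, exactly when $G$ preserves the algebra structure. By \Cref{rem:lans-give-morphisms} together with $\lan_{\eta_{\mc A}}\eta_{\mc A}\cong \id$, this says that $G$ is a $\T$-algebra morphism.

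I expect this step to be the main obstacle. The difficulty is in carefully matching the 2-cell $\delta_X$, which comes from the counit of $\alpha \dashv \eta_{\mc A}$, with the canonical comparison map, and in keeping track of the variance of geometric transformations.

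\textbf{Step 2 (existence of right Kan extensions).} Let $f \colon \Ecal \to \Fcal$ be in $\Hcal_{\T}$, and let $x \colon \Ecal \to \Cl_{\T}(\mc A)$ correspond to a $\T$-algebra morphism $G$. Then $f_* G \colon \mc A \to \Fcal$ is lex, and it is a $\T$-algebra morphism because $f_*$ is one by the definition of $\Hcal_{\T}$. By Step 1, $f_*G$ therefore determines a point $r \colon \Fcal \to \Cl_{\T}(\mc A)$, and I claim that $r = \ran_f x$.

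To verify the claim, take any $y \colon \Fcal \to \Cl_{\T}(\mc A)$ corresponding to $H$. Its composite with $f$ corresponds to $f^*H$. By local full faithfulness, 2-cells $y f \Rightarrow x$ correspond to natural transformations between $f^*H$ and $G$. Under $f^* \dashv f_*$ applied pointwise, these correspond bijectively to 2-cells between $H$ and $f_*G$, that is, to 2-cells $y \Rightarrow r$. This bijection is natural in $y$, which is exactly the universal property of $\ran_f x$. Hence $\Cl_{\T}(\mc A)$ is formally in $\Hcal_{\T}$.

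\textbf{Step 3 (morphisms).} Let $k \colon \mc A \to \mc A'$ be a morphism of small $\T$-algebras. The geometric morphism $\Cl_{\T}(k)$ is induced by $\mc P k$, so on points it acts by precomposition, sending $G'$ to $G'k$. Since right Kan extensions along $f$ are computed by postcomposition with $f_*$ as in Step 2, preservation reduces to the identity $f_*(G'k) = (f_*G')k$. Hence $\Cl_{\T}(k)$ is a morphism of $\Hcal_{\T}$-injectives. Finally, the action on 2-cells is automatic, because $\WRInj(\Hcal_{\T})$ is locally full in $\Topoi$.
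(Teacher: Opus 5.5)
Your proof is correct, but it is organised differently from the paper's.

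\textbf{Your route.} You first establish the doctrine-side Diaconescu equivalence: $\Topoi(\mc E,\Cl_{\T}(\mc A))$ is the full subcategory of $\LEX(\mc A,\mc E)$ on the $\T$-algebra morphisms. This is exactly \Cref{prop:pre-diaconescu-doctrines}. The paper states it later, but proves it by the same argument as your Step 1 and without using the present theorem, so there is no circularity. With that in hand, both injectivity and preservation become formal. Right Kan extensions along $f$ into $\Psh(\mc A)$ act on points by postcomposition with $f_*$, and $f\in\Hcal_{\T}$ says precisely that this postcomposition preserves $\T$-algebra morphisms. Your variance (2-cells $yf\Rightarrow x$ read as $f^*H\Rightarrow G$, transposed along $f^*\dashv f_*$) matches the paper's convention, cf.\ \Cref{rem:rans-along-ht-morphisms-into-t-classifying-topoi}.

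\textbf{The paper's route.} It works one level lower. It forms $h=\ran_f(jx)$ in $\Psh(\mc A)$ and checks by hand that $h^*$ inverts each $\delta_X$, using the colimit formula of \Cref{lem:technical-lemma}. That lemma is itself just an unpacking of ``$f_*$ is a $\T$-algebra morphism''. It then handles morphisms by evaluating on representables, using subcanonicity (\Cref{prop:subcanonical}).

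\textbf{Comparison.} Your approach is more modular and makes transparent why $\Hcal_{\T}$ is exactly the right class: the whole theorem reduces to closure of $\T$-algebra morphisms under composition. It also recovers the paper's remark, since your Step 2 shows $jr\cong\ran_f(jx)$, i.e.\ $j$ is an $\Hcal_{\T}$-presentation. The paper's approach is more computational, but self-contained at that point of the text.

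\textbf{One point to make explicit in Step 3.} Preservation should mean that the \emph{canonical} comparison 2-cell is invertible, not merely that $\Cl_{\T}(k)\circ\ran_f x'\cong\ran_f(\Cl_{\T}(k)\circ x')$ abstractly. This is immediate in your setup. The universal 2-cell of $\ran_f x'$ corresponds to the counit $f^*f_*G'\Rightarrow G'$. Its restriction along $k$ is the counit at $G'k$, which is the universal 2-cell for $\ran_f(\Cl_{\T}(k)\circ x')$. So the comparison is the identity.
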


\begin{rem}
In particular, this means that $\Cl^{\Hcal}$ lands in $\WRInj(\Hcal_{\T^{\Hcal}})$: we will henceforth identify $\Cl^{\Hcal}$ with the 2-functor below, obtained by composing $\Cl_{\T^{\Hcal}}$ with the inclusion $\WRInj(\Hcal_{\T^{\Hcal}}) \rightarrowtail \WRInj(\Hcal)$ induced by the inclusion $\Hcal \subseteq \Hcal_{\T^{\Hcal}}$.
\[\begin{tikzcd}[sep = small]
	{\Alg(\T^{\Hcal})\op } && {\WRInj(\Hcal)} \\
	& {\WRInj(\Hcal_{\T^{\Hcal}})}
	\arrow["{\Cl^{\Hcal}}", from=1-1, to=1-3]
	\arrow["{\Cl_{\T^{\Hcal}}}"', from=1-1, to=2-2]
	\arrow[tail, from=2-2, to=1-3]
\end{tikzcd}\]
This recovers the `conceptual soundness' result of \cite[Thm.\ 6.1.3]{dilibertiLogicConcepts2category2025}.
\end{rem}

We note here a key property of $\T$-classifying topoi that will turn useful at several technical steps of the paper: they always correspond to \emph{subcanonical topologies}, in the sense that the Yoneda embedding factors through the classifying topos. 

\begin{prop}\label{prop:subcanonical}
	For a doctrine $\T$ and a small $\T$-algebra $\mc A$, the topology corresponding to the subtopos $\Cl_\T(\mc A) \hook \psh(\mc A)$ is subcanonical. Moreover, the corestricted Yoneda embedding $\mc A \hook \Cl_\T(\mc A)$ is a morphism of $\T$-algebras.
\end{prop}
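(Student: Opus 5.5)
The plan is to describe $\Cl_\T(\mc A)$ explicitly as a category of sheaves, read subcanonicity off that description, and then deduce the $\T$-algebra morphism property from the universal property of sheafification together with \Cref{rem:lans-give-morphisms}.

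First I identify which presheaves lie in the inverter. A geometric morphism $p \colon \mc E \to \Psh(\mc A)$ factors through $\Cl_\T(\mc A)$ exactly when each 2-cell $\delta_X p$ is invertible. Take $p$ to be the inclusion of a sheaf, i.e.\ of a representable-generated point of view. Evaluating on $P \in \Psh(\mc A)$, the 2-cell $\delta_X$ is (up to the identification $\Set[\Obb]$-valued geometric morphisms $\leftrightarrow$ objects) the map $X \to \yo_{\alpha X}$ in $\Psh(\mc A)$, namely the unit of $\alpha \dashv \eta_{\mc A}$ read through the embedding $\T(\mc A) \subseteq \Psh(\mc A)$. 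So $\Cl_\T(\mc A)$ is the smallest subtopos inverting the family $\{\delta_X \colon X \to \yo_{\alpha X}\}_{X \in \T(\mc A)}$. Equivalently, its sheaves are the presheaves $F$ that are orthogonal to every $\delta_X$, i.e.\ for which $F(\alpha X) \cong \Psh(\mc A)(X, F)$ via precomposition with $\delta_X$.

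Next I show that each representable $\yo_a$ is such a sheaf, which is exactly subcanonicity. We need
\[\mc A(b,\alpha X)\text{-indexed}\quad \Psh(\mc A)(\yo_{\alpha X},\yo_a)\cong \Psh(\mc A)(X,\yo_a),\]
that is, $\mc A(\alpha X, a) \cong \T(\mc A)(X, \eta_{\mc A} a)$, using fullness of $\T(\mc A) \subseteq \Psh(\mc A)$ and the fact that $\eta_{\mc A}$ is the corestricted Yoneda embedding. This is precisely the adjunction $\alpha \dashv \eta_{\mc A}$, whose unit is $\delta_X$. So $\yo$ corestricts to $\mc A \hookrightarrow \Cl_\T(\mc A)$, and the corestriction is fully faithful and lex.

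For the second claim, write $\ell \colon \Psh(\mc A) \to \Cl_\T(\mc A)$ for sheafification, which sends each $\delta_X$ to an isomorphism. By \Cref{rem:lans-give-morphisms} it suffices to show $\lan_{\eta_{\mc A}}(\ell\yo) \cong \ell\yo \circ \alpha$, since $\ell \yo \circ \alpha$ restricts along $\eta_{\mc A}$ to $\ell\yo$ and we want the corestricted Yoneda to preserve the structural Kan extensions. The left Kan extension is computed pointwise (\Cref{prop:doctrines-via-lans-and-density}) as $X \mapsto \colim_{(a,\, \yo_a \to X)} \ell \yo_a \cong \ell(X)$, because $X$ is the canonical colimit of representables and $\ell$ is cocontinuous. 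The isomorphism $\ell(\delta_X) \colon \ell X \cong \ell\yo_{\alpha X} = \yo_{\alpha X}$ then gives $\lan_{\eta_{\mc A}}(\ell \yo) \cong \yo\circ\alpha$, naturally in $X$. The same computation covers any $G$ into the classifying topos factoring through $\mc A$, which is what the algebra structure needs.

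I expect the main obstacle to be the first step: matching the abstract inverter of 2-cells in $\Topoi$ with orthogonality to the concrete maps $\delta_X$ in $\Psh(\mc A)$. This uses that a geometric morphism $\mc E \to \Set[\Obb]$ corresponds to the inverse image of the generic object, so a 2-cell corresponds to a morphism of objects, and that inverters of such cells are computed as localisations. The rest is formal.
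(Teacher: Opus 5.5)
There is a genuine gap in your first part. Your key computation, $\Psh(\mc A)(\yo_{\alpha X},\yo_a)\cong\mc A(\alpha X,a)\cong\T(\mc A)(X,\eta_{\mc A}a)\cong\Psh(\mc A)(X,\yo_a)$ induced by precomposition with $\delta_X$, is exactly the paper's. Your treatment of the second claim is also fine once subcanonicity is known: cocontinuity of sheafification $\ell$ gives $\lan_{\eta_{\mc A}}(\ell\yo)(X)\cong\ell X\cong\ell\yo_{\alpha X}\cong\yo_{\alpha X}$. The paper's written proof does not spell that part out.

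The gap is the sentence saying the sheaves of $\Cl_\T(\mc A)$ are ``equivalently'' the presheaves orthogonal to every $\delta_X$. Inverting 2-cells in $\Topoi$ produces the \emph{left exact} localisation generated by the $\delta_X$. A sheaf for it must also be orthogonal to pullbacks of the $\delta_X$ and to their diagonals. Orthogonality to the bare family only cuts out a reflective subcategory, which need not be a subtopos.

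Already in $\Set$ this fails. The empty set is orthogonal to $2\to 1$, yet the only subtopos inverting $2\to 1$ is the degenerate one. Indeed, the diagonal $2\hookrightarrow 2\times 2$ must become dense, and its pullback along an off-diagonal point is $\emptyset\hookrightarrow 1$. So orthogonality of representables to the $\delta_X$ is a priori only a necessary condition for subcanonicity.

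This is precisely where the paper invokes left exactness of $\mc A$ and \cite[Prop.~6.3]{garnerLexColimits2012}. Your argument never uses lexness of $\mc A$ or of $\alpha$. You also place the difficulty in the routine dictionary between 2-cells into $\Set[\Obb]$ and maps of presheaves, rather than here.

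You can repair the step with your own computation.
\begin{itemize}
\item Let $W$ be the class of maps $g$ of $\T(\mc A)$ with $\alpha(g)$ invertible. The same adjunction argument shows every $\yo_a$ is orthogonal to every $g\in W$.
\item $W$ contains each $\delta_X$. By a triangle identity, $\alpha(\delta_X)$ is inverse to the counit at $\alpha X$, which is invertible since $\eta_{\mc A}$ is fully faithful.
\item $\T(\mc A)$ is closed under finite limits in $\Psh(\mc A)$ and $\alpha$ is lex. Hence $W$ contains the pullbacks of the $\delta_X$ along maps $\yo_b\to\yo_{\alpha X}$, the diagonals of these, and the pullbacks of those diagonals along maps out of representables.
\item Take $f\colon Y\to\yo_b$ in $W$ with image $m\colon I\hookrightarrow\yo_b$. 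Orthogonality of $\yo_a$ to the diagonal of $f$ forces every $Y\to\yo_a$ to coequalise the kernel pair of $f$. So every such map factors uniquely through $Y\twoheadrightarrow I$. Together with orthogonality to $f$, this makes $\yo_a$ orthogonal to $m$.
\item Hence $\yo_a$ satisfies the sheaf condition for a pullback-stable family of sieves generating the topology: the images of the pulled-back $\delta_X$, and the pulled-back diagonals. So $\yo_a$ is a sheaf.
\end{itemize}
By naturality of $\delta$ and two-out-of-three, orthogonality to all $\delta_X$ implies orthogonality to all of $W$. So your description of the sheaves does hold here, but only because of this lexness argument.

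Two minor points. The inverter is the \emph{largest} subtopos on which the $\delta_X$ become invertible, not the smallest. The sentence about taking $p$ to be ``the inclusion of a sheaf'' does not make sense and should be dropped.
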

\begin{proof}
	By definition, the topos  $\Cl_\T(\mc A)$ is defined by inverting maps of presheaves of the form $\delta_X \colon X \to \yo_{\alpha X}$ for $X\in\ms T(\mc A)$, where $\alpha \colon \T(\mc A) \to \mc A$ is the left adjoint of the unit $\eta_{\mc A} \colon \mc A \to \ms T(\mc A)$. Then, for any $a\in\mc A$, by the adjunction $\alpha \dashv \eta$ and the fact that $\eta_{\mc A}$ is the corestriction of the Yoneda embedding, we have 
    \[ \psh(\mc A)(\yo_{\alpha X},\yo_{a}) \cong \mc A(\alpha X,a) \cong \ms T(\mc A)(X,\eta_{\mc A} a) \cong \psh(\mc A)(X,\yo_a). \]
    In other words, each map $\delta_X$ is \emph{orthogonal} to every representable presheaf: since $\mc A$ is left exact, this implies that inverting the family of $\delta_X$'s results in a subcanonical topology (cf.\ \cite[Prop.~6.3]{garnerLexColimits2012}).
\end{proof}

\subsubsection{$\Cl_{\T}$ lands in $\Hcal_{\T}$-injectives} \label{lavorosporco} \looseness=-1 Fix a doctrine $\T$. In this (quite technical) subsection, we give a proof of \Cref{thm:cl-t-lands-in-ht-injectives}, i.e.\ of the fact that the 2-functor $\Cl_{\T} \colon \Alg(\T)\op\to \Topoi$ actually lands in $\Hcal_{\T}$-injectives. This result will be a key ingredient of the Diaconescu-type theorem for doctrines, which we will prove in \Cref{subsec:diaconescus}. 

We will make use of the following lemma.

\begin{lem}\label{lem:technical-lemma}
    Let $f \colon \mc E \to \mc F$ in $\Hcal_{\T}$ and, for a category $\mc C$ in $\lex$, let $g \colon \mc E \to \Psh(\mc C)$ be any geometric morphism. Then, for any $X \in \T(\mc C)$:
    \[ f_*g^*(X) \cong \colim_{(c,\phi) \in \yo^{\mc C}\downarrow X} (f_*g^* \yo^{\mc C}_c).\]
\begin{proof}
    First, consider the functor $G \coloneqq g^* \yo^{\mc C} \colon \mc C \to \mc E$, and let $\bar G \colon \T(\mc C) \to \mc E$ be the induced $\T$-algebra morphism, obtained by left Kan extension of $G$ along the unit map $\eta_{\mc C} \colon \mc C \hook \T(\mc C)$.
\[\begin{tikzcd}
	{\mc C} & {\mc E} & {\mc F} \\
	{\T(\mc C)} \\
	{\Psh(\mc C)}
	\arrow["G", from=1-1, to=1-2]
	\arrow["{\eta_{\mc C}}", hook, from=1-1, to=2-1]
	\arrow["{\yo^{\mc C}}"', curve={height=18pt}, hook, from=1-1, to=3-1]
	\arrow["{f_*}", from=1-2, to=1-3]
	\arrow["{\bar G \cong \lan_{\eta_{\mc C} G}}"', dashed, from=2-1, to=1-2]
	\arrow[hook, from=2-1, to=3-1]
	\arrow["{g^* \cong \lan_{\yo^{\mc C}}G}"', curve={height=12pt}, dashed, from=3-1, to=1-2]
\end{tikzcd}\]
Since by hypothesis $f_* \colon \mc E \to \mc F$ is a morphism of $\T$-algebras, we have that $f_*\bar G \cong \lan_{\eta_{\mc C}}(f_* G)$; hence, since $g^*(X) \cong \bar G(X)$,
\[ f_*g^*(X) \cong \lan_{\eta_{\mc C}}(f_*G)(X).\]
Now, writing $X$ as a canonical colimit of representables $\colim_{(c,\phi) \in \yo^{\mc C} \downarrow X} \yo^{\mc C}_c$ in $\Psh(\mc C)$, it is also straightforward to see that $X\cong \colim_{(c,\phi) \in \eta_{\mc C} \downarrow X} \eta_{\mc C}(c)$ in $\T(\mc C)$. Therefore, since left Kan extensions along $\eta_{\mc C}$ are computed pointwise in $\CAT$:
\[\lan_{\eta_{\mc C}}(f_*G)(X) \cong \colim_{(c,\phi) \in \eta_{\mc C} \downarrow X} f_* G(c) \cong \colim_{(c,\phi) \in \yo^{\mc C} \downarrow X} f_*g^*\yo^{\mc C}_c. \qedhere \]
\end{proof}  
\end{lem}

\begin{prop}\label{prop:t-classifying-topoi-are-ht-injective}
Classifying topoi of small $\T$-algebras are $\Hcal_{\T}$-injective.
\begin{proof}
Fix any $f \colon \mc E \to \mc F$ in $\Hcal_{\T}$ and any $x \colon \mc E \to \Cl_{\T}(\mc A)$ for a small $\T$-algebra $\mc A$. Consider the canonical embedding $j \colon \Cl_{\T}(\mc A) \hook \Psh(\mc A)$: since $\Psh(\mc A)$ is a free topos, we can always consider the right Kan extension $\ran_{f} (jx)$:
\[\begin{tikzcd}
    {\mc E} & {\Cl_{\T}(\mc A)} \\
    {\mc F} & {\Psh(\mc A)}
    \arrow["x", from=1-1, to=1-2]
    \arrow["f"', from=1-1, to=2-1]
    \arrow["j", hook, from=1-2, to=2-2]
    \arrow["{\ran_{f}(jx)}"', dashed, from=2-1, to=2-2]
\end{tikzcd}\]
Therefore, to conclude that $\ran_f x \colon \mc F \to \Cl_{\T}(\mc A)$ exists, it suffices to show that $h \coloneqq \ran_f (jx)$ factors through $j$.

To this aim, recall by \Cref{constr:t-classifying-topos} that $j \colon \Cl_{\T}(\mc A) \hook \Psh(\mc A)$ is the inverter in $\Topoi$ of the family of transformations 
\[\begin{tikzcd}
	{\Psh(\mc A)} & {\Set[\Obb]}
	\arrow[""{name=0, anchor=center, inner sep=0}, "X", curve={height=-12pt}, from=1-1, to=1-2]
	\arrow[""{name=1, anchor=center, inner sep=0}, "{\yo^{\mc A}_{a X}}"', curve={height=12pt}, from=1-1, to=1-2]
	\arrow["{\delta_X}", between={0.2}{0.8}, Rightarrow, from=0, to=1]
\end{tikzcd}\]
induced, for each $X \in \T(\mc A)$, by the adjunction $a\dashv \eta_{\mc A}$, where $a \colon \T(\mc A) \to \mc A$ is the $\T$-algebra functor. Identifying each $\delta_X$ with a natural transformation $X \Rightarrow \yo^{\mc A}_{aX}$ in $\Psh(\mc A)$, it thus follows that $h$ factors through $j$ if the arrows $h^*(\delta_X) \colon h^* X \to h^* \yo^{\mc A}_{a X}$ are invertible in $\mc F$. 

Recall now that we can describe $h^* \colon \Psh(\mc A) \to \mc F$ explicitly as the left Kan extension $\lan_{\yo^{\mc A}} ( f_* x^* j^* \yo^{\mc A})$ in $\CAT$. Writing $X$ as a canonical colimit of representables $\colim_{(a,\phi) \in \yo^{\mc A}\downarrow X} \yo^{\mc A}_a$ in $\Psh(\mc A)$, and applying \Cref{lem:technical-lemma}, we can thus compute:
\[ h^* X \cong \colim_{(a,\phi) \in \yo^{\mc A}\downarrow X} (h^* \yo^{\mc A}_a) \cong \colim_{(a,\phi) \in \yo^{\mc A}\downarrow X} (f_* x^* j^* \yo_{a}^{\mc A})\cong   f_*x^*j^* X.\]
On the other hand:
\[ h^* \yo^{\mc A}_{aX} \cong f_*x^* j^* \yo^{\mc A}_{aX}\]
The map $h^*(\delta_X)$ is thus invertible since so is $j^*(\delta_X) \colon j^* X \to j^*\yo^{\mc A}_{a X}$.
\end{proof}
\end{prop}

\begin{rem}\label{rem:rans-along-ht-morphisms-into-t-classifying-topoi}
The proof of \Cref{prop:t-classifying-topoi-are-ht-injective} shows in particular how right Kan extensions along $\Hcal_{\T}$-morphisms and into $\T$-classifying topoi are given by restriction of the Kan extensions into the corresponding categories of presheaves; in other words, for a small $\T$-algebra $\mc A$, the canonical embedding $j \colon \Cl_{\T}(\mc A) \hook \Psh(\mc A)$ is an $\Hcal_{\T}$-presentation of $\Cl_{\T}(\mc A)$. Explicitly, for any $f \colon \mc E \to \mc F$ in $\Hcal_{\T}$ and any $x \colon \mc E \to \Cl_{\T}(\mc A)$:
\[ (\ran_fx)_* \cong \lan_{f_*}x_* \qquad (\ran_f x)^* \cong \lan_{\yo^{\mc A}}(f_*x^*j^*\yo^{\mc A})j_*\]
\end{rem}

We can finally wrap this subsection up with the desired proof.

\begin{proof}[Proof of \Cref{thm:cl-t-lands-in-ht-injectives}]
By \Cref{prop:t-classifying-topoi-are-ht-injective} we only have to show that $\Cl_{\T}$ maps morphisms of $\T$-algebras to morphisms of $\Hcal_{\T}$-injectives, so let $F \colon \mc B \to \mc A$ be a morphism of small $\T$-algebras and consider the following situation, for any $f \colon \mc E \to \mc F$ in $\Hcal_{\T}$ and any $x \colon \mc E \to \Cl_{\T}(\mc A)$:
\[\begin{tikzcd}[column sep =50pt]
	{\mc E} & {\Cl_{\T}(\mc A)} \\
	{\mc F} & {\Cl_{\T}(\mc B)}
	\arrow["x", from=1-1, to=1-2]
	\arrow["f"', from=1-1, to=2-1]
	\arrow["{\Cl_{\T}(F)}", from=1-2, to=2-2]
	\arrow["{\ran_f x}"{description}, dashed, from=2-1, to=1-2]
	\arrow["{\ran_f(\Cl_{\T}F\circ x)}"', dashed, from=2-1, to=2-2]
\end{tikzcd}\]
    We need to show that $\Cl_{\T}(F) \circ \ran_f x \cong \ran_f (\Cl_{\T}F\circ x)$, for which it clearly suffices to see that the two agree on representables. In other words, we need to show that
    \[(\ran_fx)^* (\Cl_{\T}F)^* \yo^{\mc B}_b \cong (\ran_f (\Cl_{\T}F \circ x))^*\yo^{\mc B}_b\]
    for each $b\in \mc B$, where we recall that representable presheaves are sheaves since the topologies defining $\T$-classifying topoi are subcanonical. By \Cref{rem:rans-along-ht-morphisms-into-t-classifying-topoi}, we can then compute:
\begin{align*}
    (\ran_fx)^* (\Cl_{\T}F)^* \yo^{\mc B}_b &\cong (\ran_x f)^* \yo^{\mc A}_{Fb} \\
  &  \cong f_*x^*\yo^{\mc A}_{Fb} \\
   & \cong f_*x^* (\Cl_{\T}F)^*\yo^{\mc B}_{b}\\ 
   & \cong (\ran_f (\Cl_{\T}F \circ x))^*\yo^{\mc B}_b. \qedhere
\end{align*}
\end{proof}

\subsection{Diaconescu through the looking glass}\label{subsec:diaconescus}
Finally, we are in the position to deliver the promised Diaconescu-type theorems described at the beginning of this section. Concretely, they will follow from the entanglement of two more general and mirroring results, which we state here.

\begin{prop}\label{prop:pre-diaconescu-logics}
	Let $\Hcal$ be a logic. For any topos $\Xcal$ formally in $\Hcal$ and any category $\mc C$ in $\lex$, we have a natural equivalence
    \[ \LEX(\mc C, \Syn^{\Hcal}(\Xcal)) \simeq  \WRInj(\Hcal)(\Xcal, \Psh(\mc C)).\]
\begin{proof}
	Fix any $f \colon \mc E \to \mc F$ in $\Hcal$ and any geometric morphism $g \colon \Xcal \to \Psh(\mc C)$. By the usual Diaconescu's theorem~\cite[B3.2.7]{johnstoneSketchesElephantTopos2002}, $g$ corresponds to an essentially unique functor $G \colon \mc C \to \Xcal$ in $\LEX$, whose value at some $c \in \mc C$ can be computed as the composite $\yo_{c}\circ g$ below:
    \[\begin{tikzcd}
    	{\mathcal E} & {\mathcal X} & \\
    	{\mathcal F} & {\psh(\mathcal C)} & {\Set[\mathbb O]}
    	\arrow["x", from=1-1, to=1-2]
    	\arrow["f"', from=1-1, to=2-1]
    	\arrow["g", from=1-2, to=2-2]
    	\arrow["{\ran_fx}"{description}, dashed, from=2-1, to=1-2]
    	\arrow["{\ran_fgx}"', dashed, from=2-1, to=2-2]
    	\arrow["{\yo_c}"', from=2-2, to=2-3]
    \end{tikzcd}\]
    To prove the desired equivalence, it thus suffices to show that $G \colon \mc C \to \mc X$ actually lands in $\Syn^{\mc H}(\mc X)$ if and only if $g \colon \mc X \to \psh(\mc C)$ preserves the above right Kan extension for any $f\in\mc H$. 
	
	On one hand, if $g \circ \ran_fx \cong \ran_f(gx)$ for any $f\in\mc H$, then we have 
    \[ \yo_c \circ g \circ \ran_fx \cong \yo_c \circ \ran_f(gx) \cong \ran_f(\yo_c g x), \]
    meaning that $G(c) \cong \yo_c\circ g$ lies in $\Syn^{\mc H}(\mc X) \coloneqq \WRInj(\Hcal)(\Xcal, \Set[\Obb])$. 

    On the other hand, if $G$ lands in $\Syn^{\mc H}(\mc X) \subseteq \mc X$, then for any $f\in \Hcal$ and any $c\in\mc C$ we have that 
    \[ \yo_c \circ g \circ \ran_fx \cong \yo_c \circ \ran_f (gx) ,\]
     which means that $g \circ \ran_f x \cong \ran_f (gx)$.
\end{proof}
\end{prop}

\begin{prop}\label{prop:pre-diaconescu-doctrines}
	Let $\T$ be a doctrine.  For any small $\ms T$-algebra $\mc A$ and any topos $\mc E$, we have a natural equivalence 
    \[ \ALG(\ms T)(\mc A,\mc E) \simeq \Topoi(\mc E,\Cl_{\T}(\mc A)) . \]
\end{prop}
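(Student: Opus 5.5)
Our plan is to obtain the equivalence by restricting the classical Diaconescu theorem along the inverter $j \colon \Cl_{\T}(\mc A) \hook \Psh(\mc A)$, and then to identify which lex functors $\mc A \to \mc E$ correspond to geometric morphisms factoring through $j$. By the universal property of the inverter in \Cref{constr:t-classifying-topos}, $\Topoi(\mc E, \Cl_{\T}(\mc A))$ is equivalent to the full subcategory of $\Topoi(\mc E, \Psh(\mc A))$ spanned by those $g$ for which each whiskered 2-cell $\delta_X \circ g$ is invertible, for $X \in \T(\mc A)$. By Diaconescu's theorem \cite[B3.2.7]{johnstoneSketchesElephantTopos2002}, $\Topoi(\mc E, \Psh(\mc A)) \simeq \LEX(\mc A, \mc E)$ via $g \mapsto G \coloneqq g^*\yo^{\mc A}$, with $g^* \cong \lan_{\yo^{\mc A}} G$. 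The task therefore reduces to proving that, for a lex functor $G \colon \mc A \to \mc E$ with $\mc A$ a small $\T$-algebra with structure map $\alpha \colon \T(\mc A) \to \mc A$, the following are equivalent:
\begin{enumerate}
\item $G$ is a $\T$-algebra morphism;
\item $g^*(\delta_X) \colon g^*X \to G(\alpha X)$ is invertible for each $X \in \T(\mc A)$.
\end{enumerate}
This is a full subcategory on both sides, and naturality in $\mc E$ and $\mc A$ is inherited from Diaconescu. We note that $\ALG(\T)(\mc A, \mc E)$ is full in $\LEX(\mc A, \mc E)$, and that $\mc E$ is a $\T$-algebra since it is cocomplete and left Kan extensions along $\eta$ are pointwise by \Cref{prop:doctrines-via-lans-and-density}.

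To identify the two conditions, we first compute $g^*$ on $\T(\mc A)$. Since $g^* \cong \lan_{\yo^{\mc A}} G$ is pointwise, and since $X$ is also the canonical colimit $\colim_{\eta_{\mc A} \downarrow X} \eta_{\mc A}(a)$ in $\T(\mc A)$ (as in the proof of \Cref{lem:technical-lemma}), we get
\[ g^*(X) \cong \colim_{(a,\phi) \in \yo^{\mc A}\downarrow X} G(a) \cong (\lan_{\eta_{\mc A}} G)(X). \]
That is, $g^*$ restricted along $\T(\mc A) \hook \Psh(\mc A)$ is $\lan_{\eta_{\mc A}}G$. Next we compute the $\T$-algebra comparison. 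The condition for $G$ to be a morphism of $\T$-algebras (\Cref{rem:largealgebraglobal}) is that the canonical map $\lan_{\eta_{\mc A}} G \Rightarrow G\alpha$ be invertible. Here $G\alpha$ is the other candidate extension, using $\alpha \eta_{\mc A} \cong 1$, and the comparison map is the mate induced by the counit of $\alpha \dashv \eta_{\mc A}$.

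The main obstacle, where care is needed, is to check that this mate, at $X$, is precisely $g^*(\delta_X)$ under the identification above. We plan to verify this on colimit cocones. Both maps are determined by their components $G(a) \to G(\alpha X)$ for $(a,\phi) \in \yo^{\mc A}\downarrow X$. For $g^*(\delta_X)$, the component is $G$ applied to the transpose $\alpha(\phi) \colon a \cong \alpha\eta_{\mc A} a \to \alpha X$, because $\delta_X$ is the unit of $\alpha \dashv \eta_{\mc A}$ seen in $\Psh(\mc A)$. For the mate, the component is the same transpose, by its very definition. Given this, invertibility of all the $g^*(\delta_X)$ is equivalent to $G \alpha \cong \lan_{\eta_{\mc A}} G$ via the canonical map, that is, to $G$ being a $\T$-algebra morphism. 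This establishes the equivalence of (1) and (2), and hence the proposition.
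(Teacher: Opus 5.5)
Your proposal is correct and follows the paper's proof essentially step by step. Both restrict Diaconescu's equivalence along the inverter $\Cl_{\T}(\mc A) \hookrightarrow \Psh(\mc A)$, identify $g^*$ restricted to $\T(\mc A)$ with $\lan_{\eta_{\mc A}}G$, and read invertibility of all the $g^*(\delta_X)$ as the algebra-morphism condition $\lan_{\eta_{\mc A}}G \cong G\alpha$. Your explicit check that the canonical comparison map agrees with $g^*(\delta_X)$ on colimit components is a detail the paper leaves implicit. The same goes for the lexness of these pointwise extensions into $\mc E$, which your computation also yields and which is what actually makes $\mc E$ a $\T$-algebra; cocompleteness alone is not enough for that.
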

\begin{proof}
	The proof generalises that of \cite[Thm.\ 6.2.1]{dilibertiLogicConcepts2category2025}. Fix any geometric morphism $g \colon \mc E \to \Psh(\mc A)$. By the usual Diaconescu's theorem~\cite[B3.2.7]{johnstoneSketchesElephantTopos2002}, $g$ corresponds to an essentially unique functor $G \colon \mc A \to \mc E$ in $\LEX$. By construction, $g$ factors through $\Cl_{\T}(\mc A)\hookrightarrow \Psh(\mc A)$ if and only if $g^*$ inverts all 2-cells $\delta_X \colon X \to \yo_{\alpha X}$ for $X \in \T(\mc A)$, where $\alpha \colon \T(\mc A) \to \mc A$ is the structural map of the $\T$-algebra. In turn, this is equivalent to the left Kan extension below 
\[\begin{tikzcd}
	{\mc A} & {\mc E} \\
	{\T(\mc A)} \\
	{\Psh(\mc A)}
	\arrow["G", from=1-1, to=1-2]
	\arrow["{\eta_{\mc A}}"', hook, from=1-1, to=2-1]
	\arrow["{g^* \cong \lan_{\eta_{\mc A}}G}"', dashed, from=2-1, to=1-2]
	\arrow[hook, from=2-1, to=3-1]
	\arrow["{g^*}"', curve={height=12pt}, from=3-1, to=1-2]
\end{tikzcd}\]
	satisfying $g^* (X) \cong g^* \eta_{\mc A} \alpha (X)$ for all $X \in \T(\mc A)$, which exactly means that $G$ is a morphism of $\T$-algebras.
	
\end{proof}

\begin{thm}[Diaconescu's theorem for logics]\label{thm:diaconescus-logics}
	For any logic $\Hcal$, there is a 2-adjunction:
\[\begin{tikzcd}
	{{\Alg(\T^{\Hcal})}\op} && {\WRInj(\Hcal)}
	\arrow[""{name=0, anchor=center, inner sep=0}, "{\Cl^{\Hcal}}"', curve={height=12pt}, from=1-1, to=1-3]
	\arrow[""{name=1, anchor=center, inner sep=0}, "{\Syn^{\Hcal}}"', curve={height=12pt}, from=1-3, to=1-1]
	\arrow["\dashv"{anchor=center, rotate=-90}, draw=none, from=1, to=0]
\end{tikzcd}\]
\end{thm}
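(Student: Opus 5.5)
The plan is to obtain the 2-adjunction $\Cl^{\Hcal} \dashv \Syn^{\Hcal}$ by building a natural equivalence of hom-categories
\[ \Alg(\T^{\Hcal})(\mc A, \Syn^{\Hcal}(\Xcal)) \simeq \WRInj(\Hcal)(\Xcal, \Cl^{\Hcal}(\mc A)) \]
for every small $\T^{\Hcal}$-algebra $\mc A$ and every topos $\Xcal$ formally in $\Hcal$. Both functors are already known to land in the right places: $\Syn^{\Hcal}$ lands in $\Alg(\T^{\Hcal})$ by \Cref{thm:syn-h-lands-in-th-algebras}, and $\Cl^{\Hcal}$ lands in $\WRInj(\Hcal)$ by \Cref{thm:cl-t-lands-in-ht-injectives} together with $\Hcal \subseteq \Hcal_{\T^{\Hcal}}$. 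The equivalence will be assembled from the two "pre-Diaconescu" results, \Cref{prop:pre-diaconescu-logics} and \Cref{prop:pre-diaconescu-doctrines}, by intersecting the conditions each one imposes on a lex functor $\mc A \to \Xcal$.

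\textbf{Step 1: the right-hand side.} A geometric morphism $\Xcal \to \Cl^{\Hcal}(\mc A)$ is the same as a geometric morphism $g \colon \Xcal \to \Psh(\mc A)$ that factors through the embedding $j \colon \Cl^{\Hcal}(\mc A) \hookrightarrow \Psh(\mc A)$. By \Cref{prop:pre-diaconescu-doctrines}, this corresponds to a $\T^{\Hcal}$-algebra morphism $G \colon \mc A \to \Xcal$.

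I then need to compare "morphism of $\Hcal$-injectives into $\Cl^{\Hcal}(\mc A)$" with "morphism of $\Hcal$-injectives into $\Psh(\mc A)$". For this I use that $j$ is an $\Hcal_{\T^{\Hcal}}$-presentation (\Cref{rem:rans-along-ht-morphisms-into-t-classifying-topoi}), hence also an $\Hcal$-presentation, since $\Hcal \subseteq \Hcal_{\T^{\Hcal}}$. Concretely, right Kan extensions along $f \in \Hcal$ into $\Cl^{\Hcal}(\mc A)$ are computed in $\Psh(\mc A)$ and then factored through $j$. Since $j$ is fully faithful, $x$ preserves these Kan extensions if and only if $jx$ does.

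\textbf{Step 2: the left-hand side.} By \Cref{prop:pre-diaconescu-logics}, $g$ is a morphism of $\Hcal$-injectives exactly when $G$ lands in $\Syn^{\Hcal}(\Xcal)$.

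\textbf{Step 3: combining.} Putting Steps 1 and 2 together, $\WRInj(\Hcal)(\Xcal, \Cl^{\Hcal}(\mc A))$ is equivalent to the category of lex functors $G \colon \mc A \to \Syn^{\Hcal}(\Xcal)$ which are $\T^{\Hcal}$-algebra morphisms when regarded as functors into $\Xcal$. It remains to show that this is exactly $\Alg(\T^{\Hcal})(\mc A, \Syn^{\Hcal}(\Xcal))$.

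I expect this step to be the main obstacle. The claim amounts to saying that the inclusion $\Syn^{\Hcal}(\Xcal) \hookrightarrow \Xcal$ preserves and reflects the left Kan extensions along $\eta_{\mc C}$. Preservation should come from the proof of \Cref{thm:syn-h-lands-in-th-algebras}: the algebra structure on $\Syn^{\Hcal}(\Xcal)$ is inherited from $\Xcal$, which I would verify through \Cref{prop:pre-diaconescu-logics} applied to $\Psh(\T^{\Hcal}\mc C)$. Reflection then follows from full faithfulness of the inclusion: a colimit in $\Xcal$ of a diagram lying in $\Syn^{\Hcal}(\Xcal)$, whose vertex also lies in $\Syn^{\Hcal}(\Xcal)$, is a colimit in the full subcategory.

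\textbf{Step 4: naturality.} Naturality in $\mc A$ and in $\Xcal$ is inherited from the naturality of the two propositions, both of which are instances of ordinary Diaconescu's theorem. This also covers 2-cells, because both sides are locally full sub-2-categories of the corresponding categories of lex functors.
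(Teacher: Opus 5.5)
Your proposal is correct and follows essentially the paper's route: the paper also starts from \Cref{prop:pre-diaconescu-logics} and then restricts to those $g$ factoring through $\Cl^{\Hcal}(\mc A)$ via the argument of \Cref{prop:pre-diaconescu-doctrines}, and your Steps 1 and 3 usefully make explicit two points the paper leaves implicit. These are testing $\Hcal$-injectivity of maps into $\Cl^{\Hcal}(\mc A)$ after composing with the $\Hcal$-presentation $j$, and the fact that $\Syn^{\Hcal}(\Xcal) \hookrightarrow \Xcal$ preserves and reflects the Kan extensions along $\eta_{\mc C}$. One cosmetic point: in the orientation of the statement, your hom-equivalence $\Alg(\T^{\Hcal})(\mc A, \Syn^{\Hcal}(\Xcal)) \simeq \WRInj(\Hcal)(\Xcal, \Cl^{\Hcal}(\mc A))$ exhibits $\Syn^{\Hcal}$ as the left adjoint, so the adjunction should be written $\Syn^{\Hcal} \dashv \Cl^{\Hcal}$, as in the paper's diagram.
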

\begin{proof}
    A proof is contained in {\cite[Prop.\ 2.2.2]{dilibertiConceptualCompletenessSubgeometric2026}}. Here, we can prove it more easily. Fix a topos $\Xcal$ formally in $\Hcal$ and a small $\T^{\Hcal}$-algebra $\mc A$. By \Cref{prop:pre-diaconescu-logics}, we know that we have a natural equivalence
	\[ \LEX(\mc A, \Syn^{\Hcal}(\Xcal)) \simeq  \WRInj(\Hcal)(\Xcal, \Psh(\mc A)).\]
	Let then $g \colon \Xcal \to \Psh(\mc A)$ be a morphism of $\Hcal$-injectives; to prove the desired claim, it suffices to show that $g$ factors through $\Cl^{\Hcal}(\mc A) \hook \Psh(\mc A)$ if and only if the corresponding functor $G \colon \mc A \to \Syn^{\Hcal}(\Xcal)$ is a morphism of $\T^{\Hcal}$-algebras. This follows exactly as in \Cref{prop:pre-diaconescu-doctrines}.
	
\end{proof}

\begin{thm}[Diaconescu's theorem for doctrines]\label{thm:diaconescus-doctrines}
    For any doctrine $\T$, there is a 2-adjunction:
\[\begin{tikzcd}
	{{\Alg(\T)}\op} && {\WRInj(\Hcal_\T)}
	\arrow[""{name=0, anchor=center, inner sep=0}, "{\Cl_{\T}}"', curve={height=12pt}, from=1-1, to=1-3]
	\arrow[""{name=1, anchor=center, inner sep=0}, "{\Syn_{\T}}"', curve={height=12pt}, from=1-3, to=1-1]
	\arrow["\dashv"{anchor=center, rotate=-90}, draw=none, from=1, to=0]
\end{tikzcd}\]
\end{thm}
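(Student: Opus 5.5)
We establish, for a small $\T$-algebra $\mc A$ and a topos $\Xcal$ formally in $\Hcal_{\T}$, an equivalence of hom-categories
\[ \Alg(\T)(\mc A, \Syn_{\T}(\Xcal)) \simeq \WRInj(\Hcal_{\T})(\Xcal, \Cl_{\T}(\mc A)), \]
natural in both variables. This mirrors the proof of \Cref{thm:diaconescus-logics}, with the two pre-Diaconescu results, \Cref{prop:pre-diaconescu-logics} and \Cref{prop:pre-diaconescu-doctrines}, used together. By \Cref{thm:cl-t-lands-in-ht-injectives}, $\Cl_{\T}(\mc A)$ lies in $\WRInj(\Hcal_{\T})$, so the right-hand side makes sense. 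By \Cref{thm:syn-h-lands-in-th-algebras} and base change along $\T \subseteq \T^{\Hcal_{\T}}$ (\Cref{lem:unit-galois-connection}, \Cref{prop:forgetful-between-algebras}), $\Syn_{\T}(\Xcal)$ is a small $\T$-algebra, so the left-hand side makes sense.

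Step one: apply \Cref{prop:pre-diaconescu-logics} to the logic $\Hcal_{\T}$ and the category $\mc A$. This gives
\[ \LEX(\mc A, \Syn_{\T}(\Xcal)) \simeq \WRInj(\Hcal_{\T})(\Xcal, \Psh(\mc A)). \]
Step two: cut both sides down to the relevant full subcategories.

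On the right, a morphism of $\Hcal_{\T}$-injectives $g \colon \Xcal \to \Psh(\mc A)$ factors through the embedding $j \colon \Cl_{\T}(\mc A) \hookrightarrow \Psh(\mc A)$ exactly when $g^*$ inverts each $\delta_X$. In that case the factorisation $g'$ is itself a morphism of $\Hcal_{\T}$-injectives. To see this, recall from \Cref{rem:rans-along-ht-morphisms-into-t-classifying-topoi} that $j$ is an $\Hcal_{\T}$-presentation: right Kan extensions into $\Cl_{\T}(\mc A)$ along $\Hcal_{\T}$-maps are computed in $\Psh(\mc A)$ and then restricted. Since $j$ is fully faithful on hom-categories, $j \circ g' \circ \ran_f x \cong g \circ \ran_f x$. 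Combined with preservation by $g$, this gives $j \circ g' \circ \ran_f x \cong j \circ \ran_f(g' x)$, and cancelling $j$ shows that $g'$ preserves $\ran_f x$. So factoring through $j$ is precisely what the right-hand side asks of $g$.

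On the left, the argument of \Cref{prop:pre-diaconescu-doctrines}, applied with $\mc E = \Xcal$, shows the following. The lex functor $G \colon \mc A \to \Xcal$ corresponding to $g$ is a $\T$-algebra morphism into $\Xcal$ if and only if $g$ factors through $\Cl_{\T}(\mc A)$.

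Step three, which I expect to be the main subtle point: show that $G$, viewed as a functor into $\Syn_{\T}(\Xcal)$, is a $\T$-algebra morphism $\mc A \to \Syn_{\T}(\Xcal)$ if and only if, viewed as a functor into $\Xcal$, it is a $\T$-algebra morphism $\mc A \to \Xcal$. It suffices to show that the inclusion $\Syn_{\T}(\Xcal) \hookrightarrow \Xcal$ preserves and reflects the left Kan extensions along $\eta_{\mc C}$. Preservation would follow from $\Syn_{\T}(\Xcal)$ being closed in $\Xcal$ under these colimits. To prove that, I would take $Y = \lan_{\eta_{\mc C}}H(X)$ computed in $\Xcal$, for $H \colon \mc C \to \Syn_{\T}(\Xcal)$, and show $Y \in \WRInj(\Hcal_{\T})(\Xcal, \Set[\Obb])$. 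Concretely, I need $f_* x^* Y \cong (\ran_f x)^* Y$ for each $f \in \Hcal_{\T}$. I would obtain this from \Cref{lem:technical-lemma}, which expresses $f_*$ of such a colimit as the colimit of the $f_*$-images, so that these Kan extensions commute with $f_*$. If direct closure is awkward, a fallback is to observe that the inclusion $\Syn_{\T}(\Xcal) \hookrightarrow \Xcal$ is $\Syn_{\T}$ applied to a suitable comparison, and to argue through \Cref{prop:pre-diaconescu-doctrines} with $\mc A$ replaced by $\Syn_{\T}(\Xcal)$. Reflection is immediate from full faithfulness.

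Combining the three steps gives the restricted equivalence. Naturality in $\mc A$ and $\Xcal$ is inherited from the naturality of the usual Diaconescu equivalence. This yields $\Syn_{\T} \dashv \Cl_{\T}$.
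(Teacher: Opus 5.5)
Your proposal is correct and is essentially the paper's argument. Both proofs cut the ordinary Diaconescu equivalence $\LEX(\mc A,\Xcal)\simeq\Topoi(\Xcal,\Psh(\mc A))$ down by the two independent conditions from \Cref{prop:pre-diaconescu-logics} and \Cref{prop:pre-diaconescu-doctrines}; the paper starts from the doctrine side and you start from the logic side. Your Step three, that $\Syn_{\T}(\Xcal)\hookrightarrow\Xcal$ is a $\T$-algebra morphism via \Cref{lem:technical-lemma} together with cocontinuity of $(\ran_f x)^*$, and your check that the factorisation through $\Cl_{\T}(\mc A)$ is again a morphism of $\Hcal_{\T}$-injectives, make explicit what the paper's ``exactly as in'' leaves implicit, and both go through as you sketch.
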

\begin{proof}
The proof mirrors that of \Cref{thm:diaconescus-logics}. Fix a small $\T$-algebra $\mc A$ and a topos $\mc E$ formally in $\Hcal_{\T}$. By \Cref{prop:pre-diaconescu-doctrines}, we know that we have a natural equivalence
	\[ \ALG(\T)(\mc A, \mc E) \simeq  \Topoi(\mc E, \Cl_{\T}(\mc A)).\]
	Let then $g \colon \mc E \to \Psh(\mc A)$ be a geometric morphism; to prove the desired claim, it suffices to show that $g$ is a morphism of $\Hcal_{\T}$-injectives if and only if the corresponding $\T$-algebra morphism $G \colon \mc A \to \mc E$ lands in $\Syn_{\T}(\mc E)$. This follows exactly as in \Cref{prop:pre-diaconescu-logics}.
\end{proof}

For future reference, we conclude by noting here the following property, originally shown in \cite[Lem.\ 4.0.1]{dilibertiConceptualCompletenessSubgeometric2026}.

\begin{lem}\label{lem:unit-is-equiv-on-presheaves}
Let $\T$ be a bounded doctrine and let $p \colon 1\Rightarrow \Syn_{\T}\Cl_{\T}$ be the unit of $\Syn_{\T} \dashv \Cl_{\T}$. Then, for any $\mc C$ in $\lex$, the component $p_{\Psh(\mc C)} \colon \Psh(\mc C) \to \Cl_{\T}\Syn_{\T}(\Psh \mc C)$ is an equivalence.
\begin{proof}
Let $\mc A$ be a small $\T$-algebra and let $\mc E$ be a topos. Then, by \Cref{prop:pre-diaconescu-logics}:
\[ \Topoi(\mc E, \Cl^{\Hcal}(\T\mc A)) \simeq \ALG(\T)(\T\mc A, \mc E) \simeq \LEX(\mc A, \mc E) \simeq \Topoi (\mc E , \Psh(\mc A)),\]
from which $\Cl_{\T}(\T\mc A) \simeq \Psh(\mc A)$; in particular, the equivalence is realised by $p_{\Psh(\mc A)}$ specifically since the adjunction $\Syn_{\T} \dashv \Cl_{\T}$ is derived by \Cref{prop:pre-diaconescu-logics}. 
\end{proof}
\end{lem}

\section{Mise en abyme}\label{sec:mise-en-abyme}

In \Cref{sec:galois} we have established a correspondence between logics and doctrines. In this section, we try to understand what happens when one maps a logic $\Hcal$ and a doctrine $\T$ through a full tour along the Galois connection. Metaphorically, this amounts to place $\Hcal$ and $\T$ \emph{into abyss}, that is, within their mirrored images $\Hcal_{\T^{\Hcal}}$ and $\T^{\Hcal_{\T}}$. More concretely, we will obtain two symmetric characterisations of the images in terms of universal properties, thus laying a first stone towards a study of the fixpoints of the connection.


\subsection{$\Hcal_{\T^{\Hcal}}$ is the logic of $\Hcal$-classifying topoi}Fix a logic $\Hcal$. We begin by characterising the logic $\Hcal_{\T^{\Hcal}}$ as ``the logic of $\Hcal$-classifying topoi'' in a suitable sense (\Cref{thm:hth-as-the-largest-containing-h-classifying-topoi}). Before proving the main result of this subsection, let us recall some relevant facts and notations.

\begin{rem}
Recall from \cite[\S 5.2]{dilibertiLogicConcepts2category2025}, how the multiplication $\mu \colon (\T^{\Hcal} )^2 \Rightarrow \T^{\Hcal}$ of the doctrine $\T^{\Hcal}$ is constructed: its component at a category $\mc C$ in $\lex$ is defined by the functor 
\[ \WRInj(\Hcal) ( \Psh(\T^{\Hcal} \mc C), \Set[\Obb]) \to \WRInj(\Hcal) (\Psh(\mc C), \Set[\Obb])\]
given by precomposing with the morphism of $\Hcal$-injectives $\Sigma \eta_{\mc C} \colon \Psh(\mc C) \to \Psh(\T^{\Hcal} \mc C)$.
\end{rem}



\begin{thm}[$\Hcal_{\T^{\Hcal}}$ is the logic of $\Hcal$-classifying topoi]\label{thm:hth-as-the-largest-containing-h-classifying-topoi}
    The logic $\Hcal_{\T^{\Hcal}}$ is the largest extension $\Hcal'$ of $\Hcal$ such that the 2-functor $\Cl^{\Hcal} \colon \Alg(\T^{\Hcal})\op \to \WRInj(\Hcal)$ factors through the inclusion $\WRInj(\Hcal') \rightarrowtail \WRInj(\Hcal)$.
\begin{proof}
The fact that $\Cl^{\Hcal}$ lands in $\WRInj(\Hcal_{\T^{\Hcal}})$ follows by \Cref{thm:cl-t-lands-in-ht-injectives}. Suppose then that $\Hcal'\supseteq \Hcal$ is another logic such that $\Cl^{\Hcal} \colon \Alg(\T^{\Hcal})\op \to \WRInj(\Hcal)$ lands in $\WRInj(\Hcal')$; to show that $\Hcal' \subseteq \Hcal_{\T^{\Hcal}}$, consider any $f \colon \mc E \to \mc F$ in $\Hcal'$. By \Cref{prop:HofT-and-direct-images-being-Talgebra-morphisms}, to see that $f \in \Hcal_{\T^{\Hcal}}$ we need to show that for any category $\mc C$ in $\lex$ and any geometric morphism $x \colon \mc E \to \Psh(\mc C)$, the lower triangle of the following diagram commutes:
\[\begin{tikzcd}[column sep = 40pt]
	{\mc E} & {\Psh(\mc C)} \\
	{\mc F} & {\Psh(\T^{\Hcal}\mc C)}
	\arrow["x", from=1-1, to=1-2]
	\arrow["f"', from=1-1, to=2-1]
	\arrow[hook, "{\Sigma \eta_{\mc C}}", from=1-2, to=2-2]
	\arrow["{\ran_f x}"{description}, dashed, from=2-1, to=1-2]
	\arrow["{\ran_f(\Sigma\eta_{\mc C}\circ x)}"', dashed, from=2-1, to=2-2]
\end{tikzcd}\]
This is ensured if $\Sigma \eta_{\mc C}$ is a morphism of $\Hcal'$-injectives; since the 2-functor $\Cl^{\Hcal}$ lands in $\Hcal'$-injectives by hypothesis, we will now show that $\Sigma \eta_{\mc C}$ lies in its image.

Consider the square:
\[\begin{tikzcd}[column sep = 45pt]
	{\Psh(\mc C)} & {\Cl^{\Hcal}(\T^{\Hcal}\mc C)} \\
	{\Psh(\T^{\Hcal}\mc C)} & {\Cl^{\Hcal}((\T^{\Hcal})^2\mc C)}
	\arrow["{p_{\Psh\mc C}}", from=1-1, to=1-2]
	\arrow["{\Sigma\eta_{\mc C}}"', hook, from=1-1, to=2-1]
	\arrow["{\Cl^{\Hcal}(\mu_{\mc C})}", from=1-2, to=2-2]
	\arrow["{p_{\Psh(\T^{\Hcal}\mc C)}}"', from=2-1, to=2-2]
\end{tikzcd}\]
\looseness=-1
Clearly, this square commutes if and only if the two composites agree on representables, so let $X \in (\T^{\Hcal})^2(\mc C) \coloneqq \WRInj(\Hcal)(\Psh(\T^{\Hcal }\mc C), \Set[\Obb])$. On one hand, we have
\[ (\Cl^{\Hcal}(\mu_C))^* \yo_X \cong \yo_{\mu_{\mc C} (X)}\]
in $\Cl^{\Hcal}(\T^{\Hcal}\mc C)$, which $(p_{\Psh\mc C})^*$ then maps to $\mu_{\mc C}(X)$ in $\T^{\Hcal}(\mc C) \subseteq \Psh(\mc C)$. On the other hand, we have
\[ (p_{\Psh(\T^{\Hcal}\mc C)})^* \yo_X \cong X\]
in $(\T^{\Hcal})^2(\mc C) \subseteq \Psh(\T^{\Hcal}\mc C)$, which $(\Sigma\eta_{\mc C})^*$ then maps to (the object of $\Psh(\mc C)$ corresponding to) the geometric morphism $X \circ \Sigma \eta_{\mc C} \colon \Psh(\mc C) \to \Set[\Obb]$. Thus, the square commutes since $\mu_{\mc C}(X) \cong X \circ \Sigma \eta_{\mc C}$ as recalled above: therefore, as $p_{\Psh\mc C}$ and $p_{\Psh(\T^{\Hcal}\mc C)}$ are equivalences, we conclude that $\Sigma \eta_{\mc C} \cong \Cl^{\Hcal}(\mu_{\mc C})$. 
\end{proof}
\end{thm}

\subsection{$\T^{\Hcal_{\T}}$ is the doctrine of $\T$-syntactic categories}\looseness=-1
Fix a doctrine $\T$. Mirroring the above result, in this subsection we characterise the doctrine $\T^{\Hcal_{\T}}$ as ``the doctrine of $\T$-syntactic categories'' in a suitable sense. 

\begin{thm}[$\T^{\Hcal_{\T}}$ is the doctrine of $\T$-syntactic categories]\label{thm:tht-as-the-largest-including-t-algebras}
The doctrine $\T^{\Hcal_{\T}}$ is the largest extension $\T'$ of $\T$ such that the 2-functor $\Syn_{\T} \colon \WRInj(\Hcal_{\T})\op \to \Alg(\T)$ factors through the forgetful 2-functor $\Alg(\T') \to \Alg(\T)$.
\begin{proof}
    That $\Syn_{\T}$ factors through $\Alg(\T^{\Hcal_{\T}}) \to \Alg(\T)$ is true by construction. Suppose then that $\T' \supseteq \T$ is another doctrine such that $\Syn_{\T} \colon \WRInj(\Hcal_{\T})\op \to \Alg(\T)$ factors through the forgetful 2-functor $\Alg(\T') \to \Alg(\T)$. Note that, for any category $\mc C$ in $\lex$,
	\[ \T^{\Hcal_{\T}}(\mc C ) = \Syn^{\Hcal_{\T}}(\Psh \mc C) = \Syn_{\T}(\Psh \mc C), \]
	and hence it is a $\T'$-algebra by hypothesis. Thus, the left Kan extension $\lan_{\eta^{\T'}_{\mc C}} \eta^{\T^{\Hcal_{\T}}}_{\mc C}$ below exists, so that $\T' \subseteq \T^{\Hcal_{\T}}$ by  \Cref{lem:inclusions-are-lans}
	\[\begin{tikzcd}
		{\mc C } & {\T^{\Hcal_{\T}}(\mc C)} \\
		{\T'(\mc C)}
		\arrow["{\eta^{\T^{\Hcal_{\T}}}_{\mc C}}", hook, from=1-1, to=1-2]
		\arrow["{\eta^{\T'}_{\mc C}}"', hook, from=1-1, to=2-1]
		\arrow[dashed, from=2-1, to=1-2]
	\end{tikzcd}\]
    which concludes the proof.
\end{proof}
\end{thm}

\section{Reflecting on conceptual completeness}\label{sec:reflection}

The general aim of this concluding section is to immerse the \textit{nut} of coherent logic in the \textit{rising sea} of our mirror symmetry, so as to try and understand the interplay between coherent categories and their `conceptual completions', pretopoi. Let us start by recalling some relevant facts which will inform the narrative of the section.

\begin{exa}[Coherent logic, coherent categories and pretopoi]
	We shall not insist on the well-known fact that coherent categories are syntactic categories for coherent logic; we refer the reader to \cite{makkaiFirstOrderCategorical1977} for more on the matter. Concerning the forgetful 2-functor  \[ U \colon \ms{Pretopoi} \to \mathsf{Coh},\] we shall observe the following three facts.
   \begin{enumerate}
       \item\looseness=-1 The 2-functor $U$ admits a left 2-adjoint $F\colon \mathsf{Coh} \to \ms{Pretopoi}$, mapping a coherent category $\mc A$ to its \emph{pretopos completion} in the sense, e.g., of \cite[\S 8]{makkaiFirstOrderCategorical1977}. Moreover, $U$ is $2$-fully faithful, thus exhibiting $\ms{Pretopoi}$ as a reflective sub-2-category of $\mathsf{Coh}$.
       \item As shown, e.g., in \cite[Thm.\ 9.2.5]{makkaiFirstOrderCategorical1977}, the 2-functor $F$ can also be described explicitly as mapping a coherent category $\mc A$ to the pretopos of coherent objects inside its coherent classifying topos $\Sh(\mc A, J^{\mr{coh}})$.
       \item\looseness=-1 Finally, one can also give an abstract definition of a \textit{conceptually complete} coherent category based on semantics, expressing that every conservative extension of the theory it represents is, in fact, an equivalence at the level of syntactic categories. With this language, the pretopos completion of a coherent category coincides with its \textit{conceptual completion} (see, e.g., \cite[p.\ 78]{MarquisReyes2012HistoryCategoricalLogic}), and from this perspective the adjunction $F \dashv U$ can be used as a setting where to analyse Makkai's conceptual completeness theorem for coherent logic.
   \end{enumerate}
\end{exa}

Now, recall that the $2$-category of coherent categories $\mathsf{Coh}$ is the $2$-category of algebras for the doctrine $\T^{\mr{coh}}$; moreover, since $\T^{\Hflat} = \T^{\mr{pretop}}$ by \cite[Thm.\ 3.2.5]{dilibertiConceptualCompletenessSubgeometric2026}, we also have that $\T^{\mr{pretop}}$ is the image of $\T^{\mr{coh}}$ through the Galois connection between logics and doctrines. Thus, we can formulate analogous objectives and questions as the ones above for the general case of a doctrine $\T$ and the induced doctrine $\T^{\Hcal_{\T}}$, where the existence of a forgetful 2-functor 
\[ U \colon \Alg(\T^{\Hcal_{\T}}) \to \Alg(\T) \]
is ensured by \Cref{prop:forgetful-between-algebras} since $\T \subseteq \T^{\Hcal_{\T}}$. Concretely:
\begin{enumerate}
	\item we will show that $U$ admits a left 2-adjoint $F\colon \Alg(\T) \to \Alg(\T^{\Hcal_{\T}})$, exhibiting $\Alg(\T^{\Hcal_{\T}})$ as a reflective sub-$2$-category of $\Alg(\T)$;
	\item we will show that the possibility of describing $F$ explicitly as the composite 2-functor
\[\begin{tikzcd}[sep = 30pt]
	{\Alg(\T)} & {\WRInj(\Hcal_{\T})\op } & {\Alg(\T^{\Hcal_{\T}})}
	\arrow["{\Cl_{\T}}", from=1-1, to=1-2]
	\arrow["{\Syn^{\Hcal_{\T}}}", from=1-2, to=1-3]
\end{tikzcd}\]
	is equivalent to \emph{conceptual completeness} for the logic $\Hcal_{\T}$ in the sense of \cite[Def.\ 6.3.7]{dilibertiLogicConcepts2category2025};
	\item we will introduce a notion of \emph{conceptually complete $\T$-algebra} rooted in topos-based semantics, and under the assumption of conceptual completeness for the logic $\Hcal_{\T}$ we will show that $\T^{\Hcal_{\T}}$-algebras are precisely the conceptually complete $\T$-algebras.
\end{enumerate}

\begin{rem}[The story continues]
	This section should be understood as laying the foundations for future work where the above directions are explored in deeper detail. Despite the crucial role of \cite{dilibertiConceptualCompletenessSubgeometric2026}, finally breaking the spell surrounding conceptual completeness will require at least another \textit{rise of the tide}.
\end{rem}

\begin{war}
Below, we will omit $U$ when unambiguous.
\end{war}

\subsection{The left adjoint exists}
\looseness=-1
For a doctrine $\T$, we begin by constructing the left 2-adjoint to the forgetful 2-functor $\Alg(\T^{\Hcal_{\T}}) \to \Alg(\T)$. In fact, its existence only depends on the inclusion $\T \subseteq \T^{\Hcal_{\T}}$ and not on the specific definition of $\T^{\Hcal_{\T}}$, so that it can be carried out in the general setting of an inclusion of doctrines $\T \subseteq \T'$.

In some cases, when the globalisations of the doctrines preserve \emph{bifiltered colimits}, such a left 2-adjoint can be constructed by invoking the adjoint functor theorems in \cite[\S 5.3]{dilibertiBiaccessibleBipresentable2Categories2024}. In full generality, however, our doctrines need not preserve such colimits. Despite the expectation that such adjoint functor theorems should hold in higher generality, we thus defer to a strategy which is essentially identical to that of the \emph{relative completions} of \cite[Thm.\ 7.7]{garnerLexColimits2012}.

\begin{prop}[Relative completions] \label{prop:left-adj}
    Let $\T$ and $\T'$ be doctrines such that $\T \subseteq \T'$. Then, the forgetful 2-functor $\Alg(\T') \to \Alg(\T)$ has a left 2-adjoint.
\begin{proof}
For a small $\T$-algebra $\mc A$, let $F\mc A$ be the smallest $\T'$-subalgebra of $\Cl_\T(\mc A)$ containing $\mc A$:
\[\begin{tikzcd}[sep =small]
	{\mc A} && {\Cl_{\T}(\mc A)} \\
	& {F\mc A}
	\arrow["{\yo^{\mc A}}", hook, from=1-1, to=1-3]
	\arrow["y"', hook, from=1-1, to=2-2]
	\arrow[hook, from=2-2, to=1-3]
\end{tikzcd}\] 
To see that $F\mc A$ is small, note that it is contained in the image of the left Kan extension below, which is a $\T'$-subalgebra of $\Cl_{\T}(\mc A)$ as $\lan_{\eta^{\T'}_{\mc A}}\yo^{\mc A}$ is a morphism of $\T'$-algebras (\Cref{rem:lans-give-morphisms}).
\[\begin{tikzcd}[column sep = 20pt]
	{\mc A} && {\Cl_{\T}(\mc A)} \\
	{\T'(\mc A)}
	\arrow["{\yo^{\mc A}}", hook, from=1-1, to=1-3]
	\arrow["{\eta^{\T'}_{\mc A}}"', hook, from=1-1, to=2-1]
	\arrow["{\lan_{\eta^{\T'}_{\mc A}}\yo^{\mc A}}"', dashed, from=2-1, to=1-3]
\end{tikzcd}\]

Recall then from \Cref{prop:subcanonical} that the corestricted Yoneda embedding $\mc A \hookrightarrow\Cl_{\T}(\mc A)$ is a morphism of $\T$-algebras. Hence, since so is $F\mc A \hookrightarrow\Cl_{\T}(\mc A)$ as it is a morphism of $\T'$-algebras, by \cite[Lem.\ 2.2.1]{dilibertiConceptualCompletenessSubgeometric2026} it follows that the embedding $y \colon \mc A \hookrightarrow F\mc A$ is also a morphism of $\T$-algebras. 

Consider now a small $\T'$-algebra $\mc B$ and a morphism of $\T$-algebras $G \colon \mc A \to \mc B$. Since $\T \subseteq \T'$, note that $\Cl_{\T'}(\mc B)$ is a subtopos of $\Cl_{\T}(\mc B)$, so that we can consider the composite geometric morphism
\[\begin{tikzcd}[sep = 30pt]
	{\Cl_{\T'}(\mc B)} & {\Cl_{\T}(\mc B)} 	& {\Cl_{\T}(\mc A),}
	\arrow[hook,"\iota", from=1-1, to=1-2]
	\arrow["{\Cl_{\T}G}", from=1-2, to=1-3]
\end{tikzcd}\]
whose inverse image is a cocontinuous and lex functor $\Cl_{\T}(\mc A) \to \Cl_{\T'}(\mc B)$ extending $G$. Consider then the following commutative diagram in $\LEX$:
\[\begin{tikzcd}
	& {\mc A} && {\mc B} \\
	{\T'(\mc A)} \\
	& {F\mc A} \\
	& {\Cl_{\T}(\mc A)} && {\Cl_{\T'}(\mc B)}
	\arrow["G", from=1-2, to=1-4]
	\arrow["{\eta'_{\mc A}}"', hook', from=1-2, to=2-1]
	\arrow["y", hook', from=1-2, to=3-2]
	\arrow[hook',"\yo^{\mc B}", from=1-4, to=4-4]
	\arrow["{\lan_{\eta'_{\mc A}}G}"'{pos=0.6}, dashed, from=2-1, to=1-4]
	\arrow["{\lan_{\eta'_{\mc A}}y}"{pos=0.4}, dashed, from=2-1, to=3-2]
	\arrow[curve={height=12pt}, from=2-1, to=4-2]
	\arrow[hook', from=3-2, to=4-2]
	\arrow["{(\Cl_{\T} G \circ \iota)^*}"', from=4-2, to=4-4]
\end{tikzcd}\]
By cocontinuity, the restriction of $(\Cl_{\T}G \circ \iota)^*$ to $F\mc A$ is a morphism of $\T'$-algebras $F\mc A \to \Cl_{\T'} (\mc B)$, and hence:
\[ (\Cl_{\T} G \circ \iota)^*\vert_{F\mc A} \circ  \lan_{\eta'_{\mc A}}y \cong \lan_{\eta'_{\mc A}}((\Cl_{\T} G \circ \iota)^*\vert_{\mc A}) \cong \lan_{\eta'_{\mc A}}( \yo^{\mc B} \circ G) \cong \yo^{\mc B} \circ\lan_{\eta'_{\mc A}}( G) \]
since $\yo^{\mc B} \colon \mc B \hookrightarrow \Cl_{\T'} (\mc B)$ is also a morphism of $\T'$-algebras. In other words, the restriction of $(\Cl_{\T}G \circ \iota)^*$ to $F\mc A$ factors through $\mc B \hookrightarrow \Cl_{\T'} (\mc B)$ as a functor $\tilde G \colon F\mc A \to \mc B$ which is itself a morphism of $\T'$-algebras again by \cite[Lem.\ 2.2.1]{dilibertiConceptualCompletenessSubgeometric2026}. 

Clearly, $\tilde G$ restricts along $y$ to $G$ and it is the essentially unique morphism of $\T'$-algebras $F\mc A \to \mc B$ with this property; therefore, we deduce a natural equivalence of categories 
\[ \Alg(\T')(F\mc A, \mc B) \simeq \Alg(\T)(\mc A, \mc B),\]
which concludes the proof.
\end{proof}
\end{prop}

\begin{rem}\label{rem:left-adjoint-in-the-large}
    In fact, we can even prove something stronger in terms of size: namely, in the proof above, we can assume the algebra $\mc B$ to be large. In other words, the construction $\mc A \mapsto F\mc A$ is a left 2-adjoint to the forgetful functor $\ALG(\T') \to \ALG(\T)$ relative to the inclusion $\Alg(\T) \hookrightarrow\ALG(\T)$:
    \[\begin{tikzcd}[sep = small]
	& {\ALG(\T')} & \\
	{\Alg(\T)} && {\ALG(\T)}
	\arrow[""{name=0, anchor=center, inner sep=0}, from=1-2, to=2-3]
	\arrow[""{name=1, anchor=center, inner sep=0}, "F", from=2-1, to=1-2]
	\arrow[hook, from=2-1, to=2-3]
	\arrow["\dashv"{anchor=center}, draw=none, from=1, to=0]
\end{tikzcd}\]
    
    To see this, let $\mc B'$ be the smallest $\T'$-subalgebra of $\mc B$ containing the image of $G$ as a functor. This gives rise to the following factorisation (in $\LEX$):
	\[\begin{tikzcd}[sep = small]
		{\mc A} && {\mc B} \\
		& {\mc B'}
		\arrow["G", from=1-1, to=1-3]
		\arrow["{G'}"', from=1-1, to=2-2]
		\arrow["j"', hook, from=2-2, to=1-3]
	\end{tikzcd}\]
	\looseness=-1
	Again by \cite[Lem.\ 2.2.1]{dilibertiConceptualCompletenessSubgeometric2026}, since both $G$ and the inclusion $j \colon \mc B' \hookrightarrow \mc B$ are morphisms of $\T$-algebras, so is $ G' \colon \mc A \to \mc B'$. Moreover, as argued above for $F\mc A$, note that $\mc B'$ is small since it is contained in the image of the left Kan extension below:
	\[\begin{tikzcd}[sep = 20pt]
		{\mc A} & {\mc B} \\
		{\T'(\mc A)}
		\arrow["G", from=1-1, to=1-2]
		\arrow["{\eta'_{\mc A}}"', hook, from=1-1, to=2-1]
		\arrow["{\lan_{\eta'_{\mc A}}G}"', dashed, from=2-1, to=1-2]
	\end{tikzcd}\]
	Thus, \Cref{prop:left-adj} yields an essentially unique morphism of $\T'$-algebras $\tilde{G'} \colon F\mc A \to \mc B'$ restricting to $G'$ along $y \colon \mc A \hookrightarrow F\mc A$. The composite $j \circ \tilde{G'}$ is therefore a morphism of $\T'$-algebras $F\mc A \to \mc B$ restricting to $G$ along $y$; to see that it is essentially unique with this property, note that if $H$ is another such morphism then 
	\[ H \circ \lan_{\eta'_{\mc A}}y \cong \lan_{\eta'_{\mc A}}(H \circ y) \cong \lan_{\eta'_{\mc A}} G \cong j \circ \lan_{\eta'_{\mc A}}G' \cong j \circ \tilde{G'} \circ \lan_{\eta'_{\mc A}}y ,\]
	from which $H\cong j\circ \tilde{G'}$ since $\lan_{\eta'_{\mc A}}y \colon \T'(\mc A) \to F\mc A$ is essentially surjective. 
\end{rem}

\subsection{$\T^{\Hcal_{\T}}$ is a quotient of $\T$}

In this subsection we show that the adjunction between $\Alg(\T^{\Hcal_{\T}})$ and $\Alg(\T)$ given by \Cref{prop:left-adj} is, in fact, a reflection. The result will follows as a corollary of the lemma below. Our proof strategy is \textit{natural} in the sense that it is inspired, at least in spirit, by the classical theory of epimorphisms of monads \cite{karazeris2007dense}.

\begin{lem}\label{lem:pseudoepi}
    Let $\T$ be a doctrine, and let $\mc A$ be a small $\T$-algebra. Then, the inclusion $\T(\mc A) \hookrightarrow \T^{\Hcal_{\T}}(\mc A)$ is a pseudoepimorphism in $\Alg(\T)$.
\begin{proof}
    Let $i_{\mc A}$ denote the inclusion $\T(\mc A) \hookrightarrow \T^{\Hcal_{\T}}(\mc A)$, which we recall is given by the left Kan extension $\lan_{\eta^{\T}_{\mc C}}\eta^{\T^{\Hcal_{\T}}}_{\mc A}$ in $\LEX$. Consider any diagram in $\Alg(\T)$ of the form
\[\begin{tikzcd}
	{\T(\mc A)} & {\T^{\Hcal_{\T}}(\mc A)} & {\mc B}
	\arrow["{i_{\mc A}}", hook, from=1-1, to=1-2]
	\arrow["G"', shift right, from=1-2, to=1-3]
	\arrow["F", shift left, from=1-2, to=1-3]
\end{tikzcd}\]
where $F \circ i_{\mc A} \cong G \circ i_{\mc A}$; we need to show that $F \cong G$. 

\looseness=-1
Applying the 2-functor $\Cl_\T \colon \Alg(\T)\op \to \Topoi$, we get the following diagram:
\[\begin{tikzcd}
	{\Cl_{\T}(\mc B)} & {\Cl_{\T}(\T^{\Hcal_{\T}}\mc A)} & {\Cl_{\T}(\T\mc A)}
	\arrow["{\Cl_{\T}G}"', shift right, from=1-1, to=1-2]
	\arrow["{\Cl_{\T}F}", shift left, from=1-1, to=1-2]
	\arrow["{\Cl_{\T}i_{\mc A}}", from=1-2, to=1-3]
\end{tikzcd}\]
Recall then that $\Cl_{\T}(\T \mc A) \simeq \Psh(\mc A)$ by \Cref{lem:unit-is-equiv-on-presheaves}; on the other hand, unpacking the definition of $\T^{\Hcal_{\T}}(\mc A)$ and since $\Syn^{\Hcal_{\T}}$ coincides with $\Syn_{\T}$,
\[ \Cl_{\T} (\T^{\Hcal_{\T}}\mc A) = \Cl_{\T} ( \Syn^{\Hcal_{\T}} (\Psh\mc A)) = \Cl_{\T} ( \Syn_{\T} (\Psh\mc A) ).\]

\looseness=-1
Consider then the unit $p \colon 1 \Rightarrow \Cl_{\T} \Syn_{\T}$ of the adjunction $\Syn_{\T} \dashv \Cl_{\T}$; by \Cref{lem:unit-is-equiv-on-presheaves}, the geometric morphism $p_{\Psh \mc A} \colon \Psh(\mc A) \to \Cl_{\T}\Syn_{\T} (\Psh\mc A)$ is an equivalence. We now show that the following diagram commutes, from which it will follow that $\Cl_{\T} i_{\mc A}$ is an equivalence as well and hence that $\Cl_{\T} F \cong \Cl_{\T} G$.
\[\begin{tikzcd}
	{\Cl_{\T}(\T \mc A)} & {\Cl_{\T}(\T^{\Hcal_{\T}}\mc A)} \\
	{\Psh(\mc A)} & {\Cl_{\T}\Syn_{\T}(\Psh \mc A)}
	\arrow["\simeq"', from=1-1, to=2-1]
	\arrow["{\Cl_{\T}i_{\mc A}}"', from=1-2, to=1-1]
	\arrow[equals, nfold, from=1-2, to=2-2]
	\arrow["{p_{\Psh(\mc A)}}"', from=2-1, to=2-2]
\end{tikzcd}\]

To this aim, let $f \colon \Cl_{\T} (\T^{\Hcal_{\T}}\mc A) \to \Psh(\mc A)$ be the composite of $\Cl_{\T} i_{\mc A}$ with the equivalence $\Cl_{\T} (\T^{\Hcal_{\T}}\mc A) \simeq \Psh(\mc A)$ and consider the following diagram:
\[\begin{tikzcd}
	{\Cl_{\T}(\T \mc A)} & {\Cl_{\T}(\T^{\Hcal_{\T}}\mc A)} & {\T^{\Hcal_{\T}}(\mc A)} \\
	{\Psh(\mc A)} & {\Cl_{\T}\Syn_{\T}(\Psh \mc A)} & {\Syn_{\T}(\Psh \mc A)}
	\arrow["\simeq"', from=1-1, to=2-1]
	\arrow["{(\Cl_{\T}i_{\mc A})_*}"', from=1-2, to=1-1]
	\arrow["{f_*}"{description}, from=1-2, to=2-1]
	\arrow[equals, nfold, from=1-2, to=2-2]
	\arrow["{\yo^{\T^{\Hcal_{\T}}(\mc A)}}"', hook', from=1-3, to=1-2]
	\arrow[equals, nfold, from=1-3, to=2-3]
	\arrow["{(p_{\Psh \mc A})^*}", from=2-2, to=2-1]
	\arrow["j", hook', from=2-3, to=2-2]
\end{tikzcd}\]
Since $p_{\Psh(\mc A)}$ is induced by Diaconescu's equivalence, we know that $(p_{\Psh \mc A})^* j$ is the inclusion $\Syn_{\T}(\Psh \mc A) \hookrightarrow \Psh(\mc A)$; therefore, to conclude, it suffices to show that $f_* \yo^{\T^{\Hcal_{\T}}(\mc A)}$ coincides with the same inclusion. As the codomain of $f$ is a presheaf topos, we can compute its direct image as
\[ f_* \cong \Cl_{\T} (\T^{\Hcal_{\T}}\mc A) ( f^* \yo^{\mc A} - , -) \cong \Cl_{\T} (\T^{\Hcal_{\T}}\mc A) ( \yo^{\T^{\Hcal_{\T}}(\mc A)} \eta^{\T^{\Hcal_{\T}}}_{\mc A} - , -), \]
where, for the second isomorphism, we used the fact that $f^* \yo^{\mc A} \cong \yo^{\T^{\Hcal_{\T}}(\mc A)} \eta^{\T^{\Hcal_{\T}}}_{\mc A}$, which follows from the fact that $f$ corresponds to $\Cl_{\T} i_{\mc A}$. Therefore,
\begin{align*}
f_* \yo^{\T^{\Hcal_{\T}}(\mc A)} & \cong \Cl_{\T} (\T^{\Hcal_{\T}}\mc A) ( \yo^{\T^{\Hcal_{\T}}(\mc A)} \eta^{\T^{\Hcal_{\T}}}_{\mc A}  - , \yo^{\T^{\Hcal_{\T}}(\mc A)}-)\\
& \cong \T^{\Hcal_{\T}}(\mc A) ( \eta^{\T^{\Hcal_{\T}}}_{\mc A}  - , -)\\
& \cong \lan_{\eta^{\T^{\Hcal_{\T}}}_{\mc A}} \yo^{\mc A},
\end{align*}
\looseness=-1
which is exactly the inclusion $\T^{\Hcal_{\T}}(\mc A) \hookrightarrow \Psh(\mc A)$ by \Cref{lem:inclusions-are-lans}, as desired. 

Finally, to conclude that $F \cong G$, consider the following commutative diagram:
\[\begin{tikzcd}
	{\T^{\Hcal_{\T}}\mc A} & {\mc B} \\
	{\Cl_{\T}(\T^{\Hcal_{\T}}\mc A)} & {\Cl_{\T}(\mc B)}
	\arrow["F", shift left, from=1-1, to=1-2]
	\arrow["G"', shift right, from=1-1, to=1-2]
	\arrow[hook, from=1-1, to=2-1]
	\arrow[hook, from=1-2, to=2-2]
	\arrow["{(\Cl_{\T}G)^*}"', shift right, from=2-1, to=2-2]
	\arrow["{(\Cl_{\T}F)^*}", shift left, from=2-1, to=2-2]
\end{tikzcd}\]
Since $\Cl_{\T} F \cong \Cl_{\T} G$ we know that $(\Cl_{\T}F)^* \cong (\Cl_{\T}G)^*$, and hence by fully-faithfulness of $\yo^{\mc B} \colon \mc B \hookrightarrow \Cl_{\T}(\mc B)$ we conclude that $F \cong G$.
\end{proof}
\end{lem}

\begin{thm}[$\T^{\Hcal_{\T}}$-algebras embed into $\T$-algebras]\label{thm:reflection-tht-algebras}
    For a doctrine $\T$, the forgetful 2-functor $\Alg(\T^{\Hcal_{\T}}) \to \Alg(\T)$ is 2-fully faithful.
\end{thm}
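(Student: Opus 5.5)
The plan is to show that the forgetful 2-functor is bijective on 1-cells up to isomorphism, since the 2-cells are handled trivially. Write $\T' \coloneqq \T^{\Hcal_{\T}}$. Both $\Alg(\T')$ and $\Alg(\T)$ are locally full sub-2-categories of $\LEX$: their 2-cells are all natural transformations. The forgetful 2-functor of \Cref{prop:forgetful-between-algebras} is the identity on underlying functors and 2-cells. So it suffices to prove the following: for small $\T'$-algebras $\mc B, \mc B'$, every morphism of $\T$-algebras $G \colon \mc B \to \mc B'$ is already a morphism of $\T'$-algebras. Concretely, for every $g \colon \mc C \to \mc B$ in $\LEX$ with $\mc C$ small, we must show
\[ G \circ \lan_{\eta'_{\mc C}} g \cong \lan_{\eta'_{\mc C}}(Gg) \]
as functors $\T'(\mc C) \to \mc B'$.

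The key idea is to use \Cref{lem:pseudoepi} to cancel the inclusion $i_{\mc C} \colon \T(\mc C) \hookrightarrow \T'(\mc C)$. First, both functors $G\circ\lan_{\eta'_{\mc C}} g$ and $\lan_{\eta'_{\mc C}}(Gg)$ are morphisms in $\Alg(\T)$:
\begin{itemize}
\item each $\lan_{\eta'_{\mc C}}(-)$ is a $\T'$-algebra morphism by \Cref{rem:lans-give-morphisms}, hence a $\T$-algebra morphism by \Cref{prop:forgetful-between-algebras};
\item $G$ is a $\T$-algebra morphism by hypothesis;
\item $\T(\mc C)$ and $\T'(\mc C)$ are $\T$-algebras, the latter again by base change.
\end{itemize}
Next, precomposing with $i_{\mc C} = \lan_{\eta_{\mc C}}\eta'_{\mc C}$ and applying \Cref{lem:kancellation-for-doctrines} twice gives
\[ G \circ \lan_{\eta'_{\mc C}} g \circ i_{\mc C} \cong G \circ \lan_{\eta_{\mc C}} g \cong \lan_{\eta_{\mc C}}(Gg) \cong \lan_{\eta'_{\mc C}}(Gg) \circ i_{\mc C}. \]
The middle isomorphism holds precisely because $G$ is a $\T$-algebra morphism. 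Since $i_{\mc C}$ is a pseudoepimorphism in $\Alg(\T)$, the two $\T$-algebra morphisms $\T'(\mc C) \to \mc B'$ are isomorphic, which is the claim.

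The main obstacle is that \Cref{lem:pseudoepi} is stated for the inclusion $\T(\mc A) \hookrightarrow \T'(\mc A)$ with $\mc A$ a small $\T$-algebra, whereas here $\mc C$ is an arbitrary small lex category. I expect this to be harmless. Rereading the proof of \Cref{lem:pseudoepi}, the algebra structure of $\mc A$ is never used. It only relies on:
\begin{itemize}
\item $\Cl_{\T}(\T\mc A) \simeq \Psh(\mc A)$ and the unit $p_{\Psh\mc A}$ being an equivalence (\Cref{lem:unit-is-equiv-on-presheaves}), which hold for any $\mc A$ in $\lex$;
\item $\T'(\mc A) = \Syn_{\T}(\Psh\mc A)$;
\item the description of inclusions as left Kan extensions (\Cref{lem:inclusions-are-lans}).
\end{itemize}
So I would first record that \Cref{lem:pseudoepi} holds verbatim for every small lex $\mc C$.

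If this generalisation turns out to be problematic, the fallback is to reduce to a $\T$-algebra. One could replace $g$ by the $\T$-algebra morphism $\bar g = \lan_{\eta_{\mc C}} g \colon \T(\mc C) \to \mc B$ and work with the free $\T$-algebra $\T(\mc C)$. The remaining routine check is that isomorphic restrictions along $i_{\mc C}$ transfer correctly, which only needs the isomorphisms above and no coherence beyond what the pseudoepimorphism property already provides.
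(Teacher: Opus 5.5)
Your proof is correct and essentially the paper's. Both arguments establish the isomorphism after precomposing with the inclusion $\T(-)\hookrightarrow\T^{\Hcal_{\T}}(-)$, and then cancel that inclusion using \Cref{lem:pseudoepi}; you get the isomorphism from \Cref{lem:kancellation-for-doctrines} alone, while the paper combines it with pseudonaturality of the inclusion. The only real difference is that the paper checks just the structure-map square, which is your condition at $\mc C=\mc B$ and $g=\id_{\mc B}$, so \Cref{lem:pseudoepi} applies verbatim to the domain algebra; your extension of that lemma to an arbitrary small lex $\mc C$ is also valid, since its proof never uses an algebra structure on $\mc A$.
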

\begin{proof}
    Let $f \colon \mc A \to \mc B$ be a morphism of $\T$-algebras between $\T^{\Hcal_{\T}}$-algebras; we need to show that $f$ is also a morphism of $\T^{\Hcal^{\T}}$-algebras, i.e. that the diagram below commutes.
\[\begin{tikzcd}
	{\T^{\Hcal_{\T}}(\mc A)} & {\T^{\Hcal_{\T}}(\mc B)} \\
	{\mc A} & {\mc B}
	\arrow["{\T^{\Hcal_{\T}}f}", from=1-1, to=1-2]
	\arrow["\alpha"', from=1-1, to=2-1]
	\arrow["\beta", from=1-2, to=2-2]
	\arrow["f"', from=2-1, to=2-2]
\end{tikzcd}\]
Consider then the inclusions $\T(\mc A) \subseteq \T^{\Hcal_{\T}}(\mc A)$ and $\T(\mc B) \subseteq \T^{\Hcal_{\T}}(\mc B)$; by pseudonaturality, the square above in the diagram below commutes.
\[\begin{tikzcd}
	{\T(\mc A)} & {\T(\mc B)} \\
	{\T^{\Hcal_{\T}}(\mc A)} & {\T^{\Hcal_{\T}}(\mc B)} \\
	{\mc A} & {\mc B}
	\arrow["{\T f}", from=1-1, to=1-2]
	\arrow["{i_{\mc A}}"', hook, from=1-1, to=2-1]
	\arrow["{i_{\mc B}}", hook, from=1-2, to=2-2]
	\arrow["{\T^{\Hcal_{\T}}f}", from=2-1, to=2-2]
	\arrow["\alpha"', from=2-1, to=3-1]
	\arrow["\beta", from=2-2, to=3-2]
	\arrow["f"', from=3-1, to=3-2]
\end{tikzcd}\]
Note then that the composite $\T(\mc A) \hookrightarrow \T^{\Hcal_{\T}}(\mc A) \xrightarrow{\alpha} \mc A$ is the $\T$-algebra structure on $\mc A$ by \Cref{lem:kancellation-for-doctrines}, and similarly for $\mc B$. Thus, since $f$ is a morphism of $\T$-algebras, the outer rectangle also commutes. In particular:
\[ \beta \circ \T^{\Hcal_{\T}}f \circ i_{\mc A} \cong \beta \circ i_{\mc B} \circ \T f \cong f \circ \alpha \circ i_{\mc A}. \]
The result then follows by \Cref{lem:pseudoepi} as the whole diagram lies in $\Alg(\T)$, since the previous chain of isomorphisms implies that $\beta \circ \T^{\Hcal_{\T}}f \cong f \circ \alpha$.
\end{proof}

\begin{rem}\label{rem:reflection-in-the-large}
    In fact, exactly as in \Cref{rem:left-adjoint-in-the-large}, in the proof above we can assume the codomain $\mc B$ of $f \colon \mc A \to \mc B$ to be large. 

\end{rem}

\subsection{A syntactic description of concepts}
Finally, we come to an analysis of \emph{conceptual completeness}, in relation to the previous discussion. Our journey with conceptual completeness started in \cite[\S 6.3]{dilibertiLogicConcepts2category2025} with the following definition, and was pursued furthered in \cite{dilibertiConceptualCompletenessSubgeometric2026}. 

\begin{defn}[Conceptually complete logic, \protect{\cite[Def.\ 6.3.7]{dilibertiLogicConcepts2category2025}}]\label{def:conceptually-complete-logic}
    A logic $\Hcal$ is \emph{conceptually complete} if the 2-functor $\Cl^{\Hcal} \colon \Alg(\T^{\Hcal})\op \to \WRInj(\Hcal)$ is 2-fully-faithful.
\end{defn}

In \cite[\S 5]{dilibertiConceptualCompletenessSubgeometric2026}, we show that this notion of conceptual completeness is equivalent, in the case of coherent logic, to Makkai's celebrated conceptual completeness theorem for pretopoi via ultracategories \cite{makkaiStoneDualityFirst1987}. We redirect the interested reader to \emph{loc.\ cit.}\ to better understand the general topic of conceptual completeness from the perspective of logics in the sense of \Cref{def:logic}. 

\begin{rem}[Conceptually complete doctrines]
	\looseness=-1
	At this point, it is an exercise in style to mirror every relevant definition of the theory. In this case, this means that we can call a doctrine $\T$ \emph{conceptually complete} if the 2-functor $\Syn_{\T} \colon  \WRInj(\Hcal_\T)\op \to \Alg(\T)$ is essentially surjective. One can then see that $\T^\Hcal$ is conceptually complete if so is $\Hcal$, and $\Hcal_{\T}$ is conceptually complete if so is $\T$. 

This is an extremely interesting notion, which sadly goes beyond the scope of this paper. It is a recurring theme across the program on fragments of geometric logic of \cite{dilibertiLogicConcepts2category2025,dilibertiCraigInterpolationSubgeometric2026,dilibertiConceptualCompletenessSubgeometric2026} that each paper opens many more doors than it can close, and we must cut the exploration somewhere. Developing the notion above and its relation to its mirror, \Cref{def:conceptually-complete-logic}, will be left to future work.
\end{rem}

In the theorem below we will see how the conceptual completeness of the logic $\Hcal_\T$, for a doctrine $\T$, is equivalent to the possibility of describing the left 2-adjoint \[F\colon  \Alg(\T) \to \Alg(\T^{\Hcal_{\T}}) \] 
to the embeding $\Alg(\T^{\Hcal_{\T}}) \hookrightarrow \Alg(\T)$
via the formula $F(\mc A) \simeq \Syn^{\Hcal_{\T}}\Cl_{\T}(\mc A)$. In the case of coherent logic, the theorem is asserting that $\Hcal_{\T^{\mr{coh}}} = \Hcal_{\text{flat}}$ is conceptually complete \emph{precisely insofar as} we can describe the pretopos completion of a coherent category as the pretopos of coherent objects inside its coherent classifying topos.

\begin{thm}\label{thm:explain-cc}
    For any doctrine $\T$, the following are equivalent:
    \begin{enumerate}
        \item the logic $\Hcal_{\T}$ is conceptually complete;
        \item the left 2-adjoint to the embedding $\Alg(\T^{\Hcal_{\T}}) \hookrightarrow \Alg(\T)$ is given by 
\[\begin{tikzcd}
	{\Alg(\T)} & {\WRInj(\Hcal_{\T})\op } & {\Alg(\T^{\Hcal_{\T}})}
	\arrow["{\Cl_{\T}}", from=1-1, to=1-2]
	\arrow["{\Syn^{\Hcal_{\T}}}", from=1-2, to=1-3]
\end{tikzcd}\]
\end{enumerate}
\begin{proof}
\looseness=-1
$(1) \Rightarrow (2)$. Recall from \Cref{prop:left-adj} that the left adjoint $F \colon \Alg(\T) \to \Alg(\T^{\Hcal_{\T}})$ exists (see also \Cref{rem:left-adjoint-in-the-large}); we now show that, for any small $\T$-algebra $\mc B$, necessarily $F\mc B \simeq \Syn^{\Hcal_{\T}}\Cl_{\T}(\mc B)$. First note that, for any topos $\mc E$,
    \[\Topoi(\mc E,\Cl^{\mc H_\T}(F\mc B)) \simeq \ALG(\ms T^{\mc H_\T})(F\mc B,\mc E)\simeq \ALG(\ms T)(\mc B,\mc E) \simeq \Topoi(\mc E,\Cl_\T(\mc B)),\]
    from which $\Cl^{\mc H_\T}(F\mc B) \simeq \Cl_\T(\mc B) $. Hence, by conceptual completeness of $\mc H_{\T}$: 
    \[ F\mc B \simeq \Syn^{\mc H_\T}\Cl^{\mc H_\T}(F\mc B) \simeq \Syn^{\mc H_\T}\Cl_\T(\mc B). \] 

 $(2) \Rightarrow (1)$. Recall from \cite[Cor.\ 2.2.3]{dilibertiConceptualCompletenessSubgeometric2026} that $\Hcal_{\T}$ is conceptually complete if, for any small $\T^{\Hcal_{\T}}$-algebra $\mc A$, the canonical embedding $\mc A \hookrightarrow \Syn^{\Hcal_{\T}} \Cl^{\Hcal_{\T}}(\mc A)$ is an equivalence. Note then that $\Cl_{\T}(\mc A) \simeq \Cl^{\Hcal_{\T}}(\mc A)$; indeed, for any topos $\mc E$, by \Cref{thm:reflection-tht-algebras} (see also \Cref{rem:reflection-in-the-large}), we have 
    \[ \Topoi(\mc E, \Cl^{\Hcal_{\T}}(\mc A))\simeq \ALG(\T^{\Hcal_{\T}})(\mc A, \mc E) \simeq \ALG(\T)(\mc A, \mc E) \simeq \Topoi(\mc E, \Cl_{\T} (\mc A)).\]
    Since $\mc A \simeq F \mc A$ via the unit of $F \dashv U \colon \Alg(\T^{\Hcal_{\T}})\hookrightarrow \Alg(\T)$, we then conclude:
    \[ \mc A \simeq F \mc A \simeq \Syn^{\Hcal_{\T}} \Cl_{\T}(\mc A) \simeq \Syn^{\Hcal_{\T}} \Cl^{\Hcal_{\T}}(\mc A). \qedhere \]
\end{proof}
\end{thm}

\begin{rem}[A look back on \emph{four easy pieces}]
	This result substantiates the heuristic observation made in \cite[\S 3]{dilibertiConceptualCompletenessSubgeometric2026} that the essence of conceptual completeness in our framework lies in the possibility of explicitly computing `enough' syntactic categories. The attentive reader of \emph{loc.\ cit.}\ will indeed recognise the theorem above as the practical strategy employed to prove the conceptual completeness results therein, where the main technical step consists precisely in computing syntactic categories of the form $\Syn^{\Hcal_{\T}} \Cl_{\T}(\mc B)$ for a suitable class of $\T$-algebras $\mc B$.
\end{rem}

To conclude, we discuss the relationship between our work and another result, also traditionally known as \emph{conceptual completeness} for pretopoi, originally proved in \cite{makkaiFirstOrderCategorical1977} with model-theoretic techniques and later reproved in \cite{pittsInterpolationConceptualCompleteness1986} with purely categorical methods. 

The theorem asserts that, for a pretopos $\mc A$ and a coherent category $\mc B$, a coherent functor $f \colon \mc A \to \mc B$ inducing an equivalence of categories 
\[ - \circ f \colon \mathsf{Coh}(\mc B, \Set) \to \mathsf{Coh} (\mc A, \Set)\]
is itself an equivalence. Moreover, this characterises pretopoi among coherent categories, in the sense that a coherent category $\mc A$ is a pretopos if and only if it enjoys the above property. On a logical level, a coherent functor out of $\mc A $ can be seen as an interpretation of the theory represented by $\mc A$, and in particular coherent functors towards $\Set$ correspond to set-based models of the theory. Thus, the theorem can be rephrased as saying that, among coherent categories, pretopoi and only pretopoi do not admit proper conservative extensions --- in other words, pretopoi are exactly the \emph{conceptually complete} coherent categories.

Following the point of view of \cite{dilibertiLogicConcepts2category2025,dilibertiCraigInterpolationSubgeometric2026,dilibertiConceptualCompletenessSubgeometric2026}, privileging topos-based semantics over set-based semantics, to analyse the above result in our framework we start with the following observation. Throughout, fix a doctrine $\T$.

\begin{lem}\label{lem:conceptually-complete-T-algebra}
    For a morphism $f \colon \mc A \to \mc B$ in $\Alg(\T)$, the following are equivalent:
    \begin{enumerate}
        \item $f$ induces an equivalence of categories
        \[-\circ f \colon \ALG(\T)(\mc B,\mc E) \to \ALG(\T)(\mc A,\mc E) \]
        for any topos $\mc E$;
        \item the image of $f$ under $\Cl_{\ms T} \colon \Alg(\T)\op \to \Topoi$ is an equivalence. 
    \end{enumerate}
\begin{proof}
It suffices to note that, by \Cref{prop:pre-diaconescu-doctrines}, the following diagram commutes.
\[\begin{tikzcd}[column sep = 15pt]
	{\ALG(\T)(\mc B, \mc E) } & {\Topoi(\mc E, \Cl_{\T}(\mc B)) } \\
	{\ALG(\T)(\mc A, \mc E) } & {\Topoi(\mc E, \Cl_{\T}(\mc A))}
	\arrow["{-\circ f}"', from=1-1, to=2-1]
	\arrow["\simeq"{description}, draw=none, from=1-2, to=1-1]
	\arrow["{\Cl_{\T} (f) \circ -}", from=1-2, to=2-2]
	\arrow["\simeq"{description}, draw=none, from=2-2, to=2-1]
\end{tikzcd}\]
\end{proof}
\end{lem}

Borrowing traditional terminology, we can say that a morphism $f \colon \mc A \to \mc B$ in $\Alg(\T)$ satisfying the equivalent conditions of the previous lemma is a \emph{Morita-equivalence}. This allows us to give the following definition. 

\begin{defn}[Conceptually complete $\T$-algebra]\label{def:conceptually-complete-algebra}
    For a doctrine $\T$, a small $\T$-algebra $\mc A$ is \emph{conceptually complete} if any Morita-equivalence $\mc A \to \mc B$ is an equivalence of categories.
\end{defn}

\begin{rem}
    Compare condition (1) in \Cref{lem:conceptually-complete-T-algebra} with the one appearing in the original statement of conceptual completeness for pretopoi recalled above. A priori, the former is much stronger than the latter --- and hence our notion of conceptually complete $\T$-algebra much weaker than the one characterising pretopoi --- as it requires equivalence of categories of models in any topos $\mc E$ rather than merely in $\Set$. Similarly to what is shown in \cite[\S 5.3]{dilibertiConceptualCompletenessSubgeometric2026} for \Cref{def:conceptually-complete-logic}, however, the difference vanishes under a suitable assumption of completeness with respect to set-based models, hence in particular in the coherent case. 
\end{rem}

\begin{lem}\label{lem:cc-T-algebras-are-THT-algebras}
    Every conceptually complete $\T$-algebra is a $\T^{\Hcal_{\T}}$-algebra.
\begin{proof}
    Let $\mc B$ be a conceptually complete $\T$-algebra and consider the embedding $i_{\mc B} \colon \mc B \hookrightarrow F\mc B$ given by the unit of the reflection of $\Alg(\T^{\Hcal_{\T}})$ into $\Alg(\T)$. Exploiting \Cref{thm:diaconescus-doctrines} and \Cref{thm:reflection-tht-algebras}, the chain of equivalences 
    \begin{align*}
        &\Topoi(\mc E, \Cl_{\T}(F\mc B)) \\
        \simeq& \ALG(\T)(F\mc B, \mc E) \\
        \simeq& \ALG(\T^{\Hcal_{\T}})(F\mc B, \mc E) \\
        \simeq& \ALG(\T)(\mc B, \mc E) \\
        \simeq& \Topoi(\mc E, \Cl_{\T}(\mc B))
    \end{align*}
    for any topos $\mc E$ shows that $\Cl_{\T}(i_{\mc B})$ is an equivalence. In other words, $i_{\mc B}$ is a Morita-equivalence, hence an equivalence of categories by conceptual completeness of $\mc B$, thus implying that $\mc B$ lies in the image of $\Alg(\T^{\Hcal_{\T}}) \hookrightarrow \Alg(\T)$.
\end{proof}
\end{lem}

Under the assumption of conceptual completeness for the logic $\Hcal_{\T}$, we can also prove the converse of the previous lemma, thus characterising $\T^{\Hcal_{\T}}$-algebras precisely as the conceptually complete $\T$-algebras. Again, in the coherent case, this theorem can be understood as stating that pretopoi are precisely the conceptually complete coherent categories, in the sense of \Cref{def:conceptually-complete-algebra} and hence in the set-based sense described above.

\begin{prop}\label{prop:tht-as-conceptually-complete-algebras}
    If $\mc H_{\ms T}$ is conceptually complete, then a small $\T$-algebra is conceptually complete if and only if it lies in the image of $\Alg(\ms T^{\mc H_{\ms T}}) \hookrightarrow \Alg(\ms T)$.
\end{prop}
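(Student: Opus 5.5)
One direction is already \Cref{lem:cc-T-algebras-are-THT-algebras}: every conceptually complete $\T$-algebra lies in the image of $\Alg(\T^{\Hcal_{\T}}) \hookrightarrow \Alg(\T)$. This direction does not even need the hypothesis on $\Hcal_{\T}$. So the plan is to prove the converse. Let $\mc A$ be a small $\T^{\Hcal_{\T}}$-algebra and $f \colon \mc A \to \mc B$ a Morita-equivalence in $\Alg(\T)$. We must show that $f$ is an equivalence of categories.

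The key steps, in order:
\begin{enumerate}
\item By \Cref{lem:conceptually-complete-T-algebra}, $\Cl_{\T}(f) \colon \Cl_{\T}(\mc B) \to \Cl_{\T}(\mc A)$ is an equivalence of topoi.
\item Apply the reflector $F$ of \Cref{prop:left-adj}. Since $\Hcal_{\T}$ is conceptually complete, \Cref{thm:explain-cc} gives $F \simeq \Syn^{\Hcal_{\T}}\Cl_{\T}$. Hence $F(f) \colon F\mc A \to F\mc B$ is an equivalence, because $\Syn^{\Hcal_{\T}}$ is a 2-functor and carries equivalences to equivalences. Here one should check that $\Cl_{\T}(f)$, being an equivalence in $\Topoi$, is also an equivalence in $\WRInj(\Hcal_{\T})$. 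This is automatic: equivalences preserve all right Kan extensions, and both topoi are $\Hcal_{\T}$-injective by \Cref{thm:cl-t-lands-in-ht-injectives}.
\item Use naturality of the unit $\iota$ of the reflection $F \dashv U$, which gives $\iota_{\mc B} \circ f \cong F(f) \circ \iota_{\mc A}$. Since $\mc A$ is a $\T^{\Hcal_{\T}}$-algebra and the reflection is 2-fully faithful (\Cref{thm:reflection-tht-algebras}), $\iota_{\mc A}$ is an equivalence. Hence $\iota_{\mc B} \circ f$ is an equivalence.
\end{enumerate}

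It remains to show that $f$ itself is an equivalence, i.e.\ that $\iota_{\mc B}$ is an equivalence. This is the main obstacle: a priori $\mc B$ is only a $\T$-algebra. I would argue as follows.
\begin{enumerate}
\item By the construction in \Cref{prop:left-adj}, $\iota_{\mc B} \colon \mc B \hookrightarrow F\mc B$ is the corestricted Yoneda embedding into a subcategory of $\Cl_{\T}(\mc B)$. So $\iota_{\mc B}$ is fully faithful, and it only remains to show essential surjectivity.
\item From step 3 above, $\iota_{\mc B} f$ is an equivalence, so every object of $F\mc B$ is isomorphic to $\iota_{\mc B}(f(a))$ for some $a \in \mc A$. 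Hence every object lies in the essential image of $\iota_{\mc B}$, and $\iota_{\mc B}$ is essentially surjective.
\item Thus $\iota_{\mc B}$ is an equivalence. Then $f \cong \iota_{\mc B}^{-1} \circ (\iota_{\mc B} f)$ is an equivalence by two-out-of-three. This shows that $\mc A$ is conceptually complete.
\end{enumerate}

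Alternatively, one could avoid the explicit construction and deduce full faithfulness of $\iota_{\mc B}$ from \Cref{prop:subcanonical}. There, $\mc B \hookrightarrow \Cl_{\T}(\mc B)$ is fully faithful and factors through $\Syn^{\Hcal_{\T}}\Cl_{\T}(\mc B) \simeq F\mc B$.
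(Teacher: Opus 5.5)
Your proposal is correct and follows essentially the same route as the paper. The paper also reduces to one direction via the lemma, uses the naturality square of the unit with $F \simeq \Syn^{\Hcal_{\T}}\Cl_{\T}$, and concludes from full faithfulness of $i_{\mc B}$. Your explicit argument that $\iota_{\mc B}$ is essentially surjective simply spells out the paper's final sentence.
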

\begin{proof}
    By~\cref{lem:cc-T-algebras-are-THT-algebras} it suffices to show that every small $\T^{\Hcal_{\T}}$-algebra $\mc A$ is conceptually complete as a $\T$-algebra. To this end, let $f \colon \mc A \to \mc B$ in $\Alg(\T)$ be a Morita-equivalence; we will show that $f$ is an equivalence of categories. 
    
    By \Cref{thm:explain-cc}, we know that the left 2-adjoint to the embedding $\Alg(\T^{\Hcal_{\T}}) \hookrightarrow \Alg(\T)$ is computed as $\Syn^{\Hcal_{\T}}\Cl_{\T}$; hence, pseudonaturality of the unit of the reflection yields the following commutative diagram in $\Alg(\T)$:
\[\begin{tikzcd}
	{\mc A} & {\Syn^{\Hcal_{\T}}\Cl_{\T}(\mc A)} \\
	{\mc B} & {\Syn^{\Hcal_{\T}}\Cl_{\T}(\mc B)}
	\arrow["{i_{\mc A}}", hook, from=1-1, to=1-2]
	\arrow["f"', from=1-1, to=2-1]
	\arrow["{\Syn^{\Hcal_{\T}}\Cl_{\T}(f)}", from=1-2, to=2-2]
	\arrow["{i_{\mc B}}"', hook, from=2-1, to=2-2]
\end{tikzcd}\]
    Since $\mc A$ is a $\T^{\Hcal_{\T}}$-algebra, the unit map $i_{\mc A}$ is an equivalence. Moreover, since $f$ is a Morita-equivalence, the map $\Cl_{\T}(f)$ is an equivalence, and hence so is $\Syn^{\Hcal_{\T}}\Cl_{\T}(f)$. Thus, the diagonal morphism $\mc A \to \Syn^{\Hcal_{\T}}\Cl_{\T}(\mc B)$ is an equivalence, and since $i_{\mc B}$ is fully faithful, this implies that $f$ is an equivalence as well. 
\end{proof}

\bibliography{thebib}

@misc{ArkorNathanael2025Boaf,
    author = {Arkor, N. and Saville, P. and Slattery, A.},
    title = {Bicategories of algebras for relative pseudomonads},
    year = {2025},
    eprint = {2501.12510},
    archivePrefix = {arXiv},
}

@article{karazeris2007dense,
  title={Dense morphisms of monads},
  author={Karazeris, P. and Velebil, J.},
  journal={Theory and Applications of Categories},
  volume={18},
  number={14},
  pages={372--399},
  year={2007}
}

@article{DILIBERTI2020106379,
title = {Codensity: Isbell duality, pro-objects, compactness and accessibility},
journal = {Journal of Pure and Applied Algebra},
volume = {224},
number = {10},
pages = {106379},
year = {2020},
issn = {0022-4049},
doi = {https://doi.org/10.1016/j.jpaa.2020.106379},
url = {https://www.sciencedirect.com/science/article/pii/S0022404920300773},
author = {Di Liberti, I.}
}

@misc{GambinoTendas2026,
  author        = {Gambino, N. and Tendas, G.},
  title         = {Isoregular Theories, Accessible 2-Categories,
                   and Free Constructions},
  year          = {2026},
  eprint        = {2606.26717},
  archivePrefix = {arXiv},
  primaryClass  = {math.CT}
}

@techreport{Kock1967LimitMonads,
  author      = {Kock, A.},
  title       = {Limit Monads in Categories},
  institution = {Aarhus Universitet, Matematisk Institut},
  series      = {Preprint Series 1967/68},
  number      = {6},
  year        = {1967}
}

@article{kockMonadsStructuresAdjoint1995,
title = {Monads for which structures are adjoint to units},
journal = {Journal of Pure and Applied Algebra},
volume = {104},
number = {1},
pages = {41-59},
year = {1995},
issn = {0022-4049},
doi = {https://doi.org/10.1016/0022-4049(94)00111-U},
url = {https://www.sciencedirect.com/science/article/pii/002240499400111U},
author = {A. Kock},
}

@incollection{MarquisReyes2012HistoryCategoricalLogic,
  author    = {Marquis, J.-P. and Reyes, G. E.},
  title     = {The History of Categorical Logic: 1963--1977},
  booktitle = {Sets and Extensions in the Twentieth Century},
  editor    = {Gabbay, Dov M. and Kanamori, Akihiro and Woods, John},
  series    = {Handbook of the History of Logic},
  volume    = {6},
  pages     = {689--800},
  publisher = {North-Holland},
  year      = {2012},
  doi       = {10.1016/B978-0-444-51621-3.50010-4}
}

@article{Reyes2019ToposMontreal,
  author  = {Reyes, G. E.},
  title   = {Topos Theory in {Montr{\'e}al} in the 1970s:
             My Personal Involvement},
  journal = {History and Philosophy of Logic},
  volume  = {40},
  number  = {4},
  pages   = {389--402},
  year    = {2019},
  doi     = {10.1080/01445340.2018.1554471}
}

@mastersthesis{Osser2025,
  author = {Osser, J.},
  title  = {A 2-Sketchy Approach to Type Theory},
  school = {University of Amsterdam},
  year   = {2025},
  type   = {Master's thesis},
  note   = {MSc Logic, Faculty of Science}
}

@article{Street1980,
  author  = {Street, R.},
  title   = {Cosmoi of Internal Categories},
  journal = {Transactions of the American Mathematical Society},
  volume  = {258},
  number  = {2},
  pages   = {271--318},
  year    = {1980}
}

@article{Power1995,
  author  = {Power, A. John},
  title   = {Why Tricategories?},
  journal = {Information and Computation},
  volume  = {120},
  number  = {2},
  pages   = {251--262},
  year    = {1995}
}

@misc{ArkorBourkeKo2024,
  author        = {Arkor, N. and Bourke, J. and Ko, J.},
  title         = {Enhanced 2-categorical structures, two-dimensional limit sketches
                   and the symmetry of internalisation},
  year          = {2024},
  eprint        = {2412.07475},
  archivePrefix = {arXiv},
  primaryClass  = {math.CT},
  doi           = {10.48550/arXiv.2412.07475}
}

@article{LambertPatterson2024,
  author  = {Lambert, M. and Patterson, E.},
  title   = {Cartesian double theories: A double-categorical framework
             for categorical doctrines},
  journal = {Advances in Mathematics},
  volume  = {444},
  pages   = {109630},
  year    = {2024},
  doi     = {10.1016/j.aim.2024.109630},
  eprint  = {2310.05384},
  archivePrefix = {arXiv},
  primaryClass  = {math.CT}
}

@incollection{Lawvere1969Doctrines,
  author    = {Lawvere, F. W.},
  title     = {Ordinal Sums and Equational Doctrines},
  booktitle = {Seminar on Triples and Categorical Homology Theory},
  editor    = {Eckmann, Beno},
  series    = {Lecture Notes in Mathematics},
  volume    = {80},
  pages     = {141--155},
  publisher = {Springer-Verlag},
  address   = {Berlin},
  year      = {1969},
  doi       = {10.1007/BFb0083085},
  url       = {https://doi.org/10.1007/BFb0083085}
}

@misc{Lawvere2006JonBeck,
  author       = {Lawvere, F. W.},
  howpublished = {Message to the Categories mailing list},
  year         = {2006},
  month        = mar,
  day          = {23},
  url          = {https://categories.org.au/archives/list/categories-list%40categories.org.au/thread/Z6CVSVVQQK3YCXGNOYFTLJCZJ527U6QQ/},
  note         = {Subject: ``categories: Jon Beck''}
}

@article{arkorFormalTheoryRelative2024,
  title = {The Formal Theory of Relative Monads},
  author = {Arkor, N. and McDermott, D.},
  year = 2024,
  journal = {Journal of Pure and Applied Algebra},
  volume = {228},
  number = {9},
  pages = {107676}
}

@book{borceuxHandbookCategoricalAlgebra1994a,
  title = {Handbook of {{Categorical Algebra}}: {{Volume}} 1: {{Basic Category Theory}}},
  shorttitle = {Handbook of {{Categorical Algebra}}},
  author = {Borceux, F.},
  year = 1994,
  series = {Encyclopedia of {{Mathematics}} and Its {{Applications}}},
  volume = {1},
  publisher = {Cambridge University Press},
  address = {Cambridge}
}

@article{dilibertiBiaccessibleBipresentable2Categories2024,
  title = {Bi-Accessible and {{Bipresentable}} 2-{{Categories}}},
  author = {Di Liberti, I. and Osmond, A.},
  year = 2024,
  journal = {Applied Categorical Structures},
  volume = {33},
  number = {1},
  pages = {3},
  langid = {english}
}

@misc{dilibertiConceptualCompletenessSubgeometric2026,
  title = {Conceptual Completeness for Subgeometric Logics},
  author = {Di Liberti, I. and Tarantino, U. and Ye, L.},
  year = 2026,
  number = {arXiv:2607.02250},
  primaryclass = {math.LO},
  publisher = {arXiv},
  archiveprefix = {arXiv}
}

@misc{dilibertiCraigInterpolationSubgeometric2026,
  title = {Craig {{Interpolation}} for {{Subgeometric Logics}}},
  author = {Di Liberti, I. and Ye, L.},
  year = 2026,
  number = {arXiv:2601.11221},
  primaryclass = {math},
  publisher = {arXiv},
  archiveprefix = {arXiv}
}

@misc{dilibertiGeometryCoherentTopoi2022,
  title = {The Geometry of {{Coherent}} Topoi and {{Ultrastructures}}},
  author = {Di Liberti, I.},
  year = 2022,
  number = {arXiv:2211.03104},
  primaryclass = {math},
  publisher = {arXiv},
  archiveprefix = {arXiv}
}

@article{PowerCattaniWinskel2000,
  author  = {A. J. Power and G. L. Cattani and G. Winskel},
  title   = {A representation result for free cocompletions},
  journal = {Journal of Pure and Applied Algebra},
  volume  = {151},
  number  = {3},
  pages   = {273--286},
  year    = {2000},
  doi     = {10.1016/S0022-4049(99)00050-X}
}

@article{ACU-relative,
	title      = {Monads need not be endofunctors},
	author     = {T. Altenkirch and J. Chapman and T. Uustalu},
	journal    = {Logical Methods in Computer Science},
	issn       = {1860-5974},
	volume     = {Volume 11, Issue 1},
	year       = {2015},
}

@InProceedings{ACU-conference,
	author="Altenkirch, T.
	and Chapman, J.
	and Uustalu, T.",
	editor="Ong, L.",
	title="Monads Need Not Be Endofunctors",
	booktitle="Foundations of Software Science and Computational Structures",
	year="2010",
	publisher="Springer Berlin Heidelberg",
	address="Berlin, Heidelberg",
	pages="297--311",
}

@incollection{kelly1972manyvariable,
  author    = {Kelly, G. M.},
  title     = {Many-variable functorial calculus {I}},
  booktitle = {Coherence in Categories},
  series    = {Lecture Notes in Mathematics},
  volume    = {281},
  pages     = {66--105},
  publisher = {Springer-Verlag},
  year      = {1972},
  presort = {a}
}

@incollection{kelly1972cutelimination,
  author    = {Kelly, G. M.},
  title     = {A cut-elimination theorem},
  booktitle = {Coherence in Categories},
  series    = {Lecture Notes in Mathematics},
  volume    = {281},
  pages     = {196--213},
  publisher = {Springer-Verlag},
  year      = {1972},
  presort = {b}
}

@incollection{kelly1974clubs,
  author    = {Kelly, G. M.},
  title     = {On clubs and doctrines},
  booktitle = {Category Seminar (Proc. Sem., Sydney, 1972/1973)},
  series    = {Lecture Notes in Mathematics},
  volume    = {420},
  pages     = {181--256},
  publisher = {Springer-Verlag},
  year      = {1974},
  doi       = {10.1007/BFb0063104}
}

@article{dilibertiKZPseudomonadsKanInjectivity2024,
  title = {{{KZ-pseudomonads}} and {{Kan}} Injectivity},
  author = {Di Liberti, I. and Lobbia, G. and Sousa, L.},
  year = 2024,
  journal = {Theory and Applications of Categories},
  volume = {40},
  number = {16},
  pages = {430--478},
  langid = {english}
}

@article{Lawvere1969Adjointness,
  author  = {Lawvere, F. W.},
  title   = {Adjointness in Foundations},
  journal = {Dialectica},
  volume  = {23},
  number  = {3--4},
  pages   = {281--296},
  year    = {1969},
  doi     = {10.1111/j.1746-8361.1969.tb01194.x}
}

@misc{dilibertiLogicConcepts2category2025,
  title = {Logic and {{Concepts}} in the 2-Category of {{Topoi}}},
  author = {Di Liberti, I. and Ye, L.},
  year = 2025,
  number = {arXiv:2504.16690},
  primaryclass = {math},
  publisher = {arXiv},
  archiveprefix = {arXiv}
}

@article{garnerLexColimits2012,
  title = {Lex Colimits},
  author = {Garner, R. and Lack, S.},
  year = 2012,
  journal = {Journal of Pure and Applied Algebra},
  volume = {216},
  number = {6},
  pages = {1372--1396}
}

@book{johnstoneSketchesElephantTopos2002,
  title = {Sketches of an {{Elephant}}: {{A Topos Theory Compendium}}},
  shorttitle = {Sketches of an {{Elephant}}},
  author = {Johnstone, P. T.},
  year = 2002,
  publisher = {Clarendon Press},
  address = {Oxford, England}
}

@book{makkaiFirstOrderCategorical1977,
  title = {First {{Order Categorical Logic}}: {{Model-Theoretical Methods}} in the {{Theory}} of {{Topoi}} and {{Related Categories}}},
  shorttitle = {First {{Order Categorical Logic}}},
  author = {Makkai, M. and Reyes, G. E.},
  year = 1977,
  series = {Lecture {{Notes}} in {{Mathematics}}},
  volume = {611},
  publisher = {Springer},
  address = {Berlin, Heidelberg},
  copyright = {http://www.springer.com/tdm},
  langid = {english}
}

@article{makkaiStoneDualityFirst1987,
  title = {Stone Duality for First Order Logic},
  author = {Makkai, M.},
  year = 1987,
  journal = {Advances in Mathematics},
  volume = {65},
  number = {2},
  pages = {97--170},
  copyright = {https://www.elsevier.com/tdm/userlicense/1.0/},
  langid = {english}
}

@article{marmolejo-wood-kan-extensions,
  title={Kan extensions and lax idempotent pseudomonads},
  author={Marmolejo, F. and Wood, R. J.},
  journal={Theory and Applications of Categories},
  volume={26},
  number={1},
  pages={1--29},
  year={2012}
}

@article{kellyPROPERTYLIKESTRUCTURES,
  title = {ON PROPERTY-LIKE STRUCTURES},
  author = {Kelly, G. M. and Lack, Stephen},
  year = 1997,
  journal = {Theory and Applications of Categories},
  volume = {3},
  number = {9},
  pages = {213--250},
  langid = {english},
}

@article{marmolejo-doctrines,
  title = {DOCTRINES WHOSE STRUCTURE FORMS A FULLY FAITHFUL ADJOINT STRING},
  author = {Marmolejo, F.},
  year = 1997,
  journal = {Theory and Applications of Categories},
  volume = {3},
  number = {2},
  langid = {english},
}

@incollection{pittsInterpolationConceptualCompleteness1986,
  title = {Interpolation and {{Conceptual Completeness}} for {{Pretoposes}} via {{Category Theory}}},
  booktitle = {Mathematical {{Logic}} and {{Theoretical Computer Science}}},
  author = {Pitts, A. M.},
  year = 1986,
  publisher = {CRC Press}
}

@article{walkerYonedaStructuresKZ2018,
  title = {Yoneda Structures and {{KZ}} Doctrines},
  author = {Walker, C.},
  year = 2018,
  journal = {Journal of Pure and Applied Algebra},
  volume = {222},
  number = {6},
  pages = {1375--1387}
}

@article{zoberlein,
    author = {V. Zöberlein},
    title = {Doctrines on 2-categories},
    journal = {Mathematische Zeitschrift},
    volume = {148},
    number = {3},
    year = {1976},
    pages ={267--279}
}

@incollection{joyal-anel, 
    place={Cambridge}, 
    title={Topo-logie}, 
    booktitle={New Spaces in Mathematics: Formal and Conceptual Reflections}, 
    publisher={Cambridge University Press}, 
    author={Anel, M. and Joyal, A.}, 
    editor={Anel, Mathieu and Catren, Gabriel}, 
    year={2021}, 
    pages={155–257}
}

@article{day-lack,
title = {Limits of small functors},
journal = {Journal of Pure and Applied Algebra},
volume = {210},
number = {3},
pages = {651-663},
year = {2007},
issn = {0022-4049},
doi = {https://doi.org/10.1016/j.jpaa.2006.10.019},
url = {https://www.sciencedirect.com/science/article/pii/S0022404906002635},
author = {B. J. Day and S. Lack},
}

@article{adamek-rosicky-free-completions,
title = {How nice are free completions of categories?},
journal = {Topology and its Applications},
volume = {273},
pages = {106972},
year = {2020},
issn = {0166-8641},
doi = {https://doi.org/10.1016/j.topol.2019.106972},
url = {https://www.sciencedirect.com/science/article/pii/S0166864119303773},
author = {Jiří Adámek and Jiří Rosický},
}

@article{walkerDistributiveLawsAdmissibility2019,
  title = {Distributive {{Laws}} via {{Admissibility}}},
  author = {Walker, C.},
  year = 2019,
  journal = {Applied Categorical Structures},
  volume = {27},
  number = {6},
  pages = {567--617},
  langid = {english}
}
\bibliographystyle{alpha}

\end{document}